\documentclass[10pt,a4paper]{amsart}

\usepackage{fullpage}
\usepackage{amsmath,amsthm,amssymb,mathtools}
\usepackage{enumitem}
\usepackage[colorlinks,citecolor=blue,linkcolor=blue,urlcolor=blue,breaklinks]{hyperref}
\numberwithin{equation}{section}
\numberwithin{figure}{section}

\renewcommand{\leq}{\leqslant}
\newcommand{\Homeo}{\mathrm{Homeo}}

\newtheorem{thm}{Theorem}[section]
\newtheorem{athm}{Theorem}

\newtheorem{acor}[athm]{Corollary}
\newtheorem{lem}[thm]{Lemma}
\newtheorem{prop}[thm]{Proposition}

\newtheorem*{thm*}{Theorem}
\newtheorem*{conj*}{Conjecture}
\newtheorem*{cor*}{Corollary}
\newtheorem*{ques*}{Question}
\newtheorem*{claim*}{Claim}

\theoremstyle{definition}
\newtheorem{rem}[thm]{Remark}
\newtheorem{defn}[thm]{Definition}

\newtheorem{ex}[thm]{Example}

\newtheorem*{rem*}{Remark}

\newcommand{\Cantor}{\mathfrak C}

\DeclareMathOperator{\Aut}{Aut}
\newcommand{\PB}{PB}
\DeclareMathOperator{\gr}{gr}
\hypersetup{pdftitle={Hopficity of braided and labeled Thompson groups},
  pdfsubject={Paper on Hopficity of BV, BF and V(G)}}

\title{Hopficity of braided and labeled Thompson groups}
\subjclass[2020]{20F65, 20F36, 20E36}
\keywords{Hopfian group, braided Thompson groups BV and BF, labeled Thompson group, braid group, residual nilpotence}
\date{October 2026}

\author{Mengfei Zhao}
\address{Shanghai Center for Mathematical Sciences, Jiangwan Campus, Fudan University, No.2005 Songhu Road, Shanghai, 200438, P.R. China}
\email{mfzhao22@m.fudan.edu.cn}

\begin{document}
\begin{abstract}
We prove that the braided Thompson groups $BV$ and $BF$ are Hopfian,
answering the two parts of Zaremsky's question. We also prove that, for every discrete group
$G$, the labeled Thompson group $V(G)$ is Hopfian if and only
if $G$ is Hopfian, proving a conjecture of Wu--Wu--Zhao--Zhou. The second result allows us to prove a new embedding result for Hopfian groups: every Hopfian group of type $F_n$ embeds quasi-isometrically as a subgroup of a Hopfian, boundedly acyclic, acyclic group of type $F_n$ that has no proper finite-index subgroups.
\end{abstract}
\maketitle
\enlargethispage{2pt}

\section*{Introduction}
A group $H$ is \emph{Hopfian} if every surjective homomorphism
$\phi:H\to H$ is injective, and hence an automorphism. Equivalently, there
is no nontrivial normal subgroup $N\lhd H$ for which $H/N\cong H$. Thus Hopficity is a rigidity property: a group  cannot be isomorphic to any proper quotient of itself.

The study of this property goes back to Hopf's work on self-maps of surfaces \cite{Hopf1931}.  Hopf proved that every surjective endomorphism of the fundamental group of a closed orientable surface of positive genus is an automorphism. Geometrically, this implies that every continuous self-map of such a surface inducing a surjection on its fundamental group is a homotopy equivalence and, by classical surface rigidity, is homotopic to a homeomorphism. This connection between algebraic and geometric rigidity motivated the study of Hopficity as an important property of groups.

From the definition, it is easy to see that finite groups and simple groups are Hopfian. A classical positive direction is supplied by residual finiteness and finite generation.
A group is \emph{residually finite} if every nontrivial element has
nontrivial image in some finite quotient. Malcev shows that every finitely generated
residually finite group is Hopfian \cite{Malcev1940}. Finite generation alone does not suffice:
Neumann constructed a two-generator group isomorphic to a proper
quotient \cite{Neumann1950}. Higman subsequently gave a finitely
presented example \cite{Higman1951NonHopfian}. Baumslag and Solitar
then exhibited two-generator one-relator non-Hopfian groups
\cite{BaumslagSolitar1962}; a familiar example is
$\operatorname{BS}(2,3)$.
These counterexamples show that even a short finite presentation does
not prevent a group from having a proper self-quotient.

Later developments connected Hopficity with geometric rigidity.
For example, Sela proved that every torsion-free hyperbolic group is
Hopfian \cite{Sela1999}. Such results motivate methods that control
endomorphisms through group structure and geometry, rather than
requiring finite quotients to distinguish elements. The question is
particularly interesting for groups with strong finiteness properties
but few finite quotients: their presentations and classifying spaces
can be well behaved even though the classical residual-finiteness
criterion cannot be applied.

This paper focuses on non-residually finite examples of Hopfian
groups. We study the braided Thompson groups $BV$ and $BF$, and the
labeled Thompson group $V(G)$, where $G$ is an arbitrary
discrete group. For the labeled construction, we determine exactly
which label groups yield Hopfian groups. The proofs use the structure
of the groups and their quotient actions on the kernels. We first
introduce the three families and their distinguishing properties.

\smallskip
\noindent\textbf{The braided Thompson groups $BV$ and $BF$.}
The braided Thompson group $BV$ was introduced independently by
Brin and Dehornoy
\cite{Brin2007,Dehornoy2006}. For the classical Thompson group $V$, one element can be represented by two finite
rooted binary trees with the same number of leaves and a permutation of their leaves. In $BV$, that
permutation is replaced by a braid: the strands record their
crossings as well as their endpoint permutation. Expanding a leaf
splits the corresponding strand into two parallel strands, a process
called \emph{cloning}. Replacing each braid by its endpoint permutation
gives an exact sequence
\[
 1\longrightarrow P\longrightarrow BV\longrightarrow V\longrightarrow1,
\]
where $P$ is the pure braid kernel. Here a \emph{pure braid} is a braid
in which every strand returns to its own labeled endpoint. The group
$P$ is a directed union of finite pure braid groups under cloning, see \cite{Zaremsky2018} for more details.

Recall that, for an integer $n\ge1$, \emph{type $F_n$} means that
a group has a classifying space with finitely many cells in dimensions
at most $n$. A \emph{classifying space} is a connected CW complex
whose fundamental group is the given group and whose universal cover
is contractible. The union of its cells of dimensions at most $n$ is
its \emph{$n$-skeleton}. In particular, type $F_1$ means finite
generation and type $F_2$ means finite presentation.

The group $BV$ is torsion-free and contains an embedded copy of every finite-strand braid group $B_n$, where $n$ is the number of strands \cite{Brin2007}. Brin proved its finite presentation in
\cite{Brin2006}. Moreover, the group $BV$ is also of type $F_\infty$, 
by the theorem of Bux--Fluch--Marschler--Witzel--Zaremsky
\cite{BuxEtAl2016}.
The exact sequence above does not split, since $V$ contains nontrivial
finite-order elements whereas $BV$ is torsion-free
\cite{Brin2007}.
Moreover, every proper normal subgroup of $BV$ lies in $P$, so every
nontrivial quotient surjects onto $V$ and there is no nontrivial finite
quotient \cite[Corollary~2.8]{Zaremsky2018}.
These strong properties coexist with a non-Hopfian pure braid kernel,
as established by Zaremsky
\cite[Proposition~4.8]{Zaremsky2018}.

The braided Thompson group $BF$ is represented by pure braids
between ordered binary trees \cite{BradyBurilloClearyStein2008}.
Forgetting its braid gives a split exact sequence
\[
 1\longrightarrow P\longrightarrow BF\longrightarrow F\longrightarrow1,
 \qquad BF=P\rtimes F,
\]
with the same non-Hopfian pure braid kernel $P$ as for $BV$.
As a subgroup of $BV$, the group $BF$ is torsion-free. It contains
every finite pure braid group and is finitely presented
\cite{BradyBurilloClearyStein2008}. Its type $F_\infty$ property is
proved in \cite{BuxEtAl2016,BuxEtAl2021}. Its abelianization is
$\mathbb Z^4$ \cite[Lemma~1.7]{Zaremsky2018}.
Unlike the quotient $V$ in the previous extension, $F$ preserves the
order of the Cantor space and fixes its two endpoints.

The two braided groups lead to the following question, posed by
Zaremsky in the paper
\cite[Question~4.7]{Zaremsky2018}:
\begin{ques*}[Zaremsky]
Are the braided Thompson groups $BV$ and $BF$ Hopfian?
\end{ques*}
The following
two main theorems answer both parts of this question.

\begin{athm}\label{thm:A}
The braided Thompson group $BV$ is Hopfian.
\end{athm}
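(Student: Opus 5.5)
Let $\phi:BV\to BV$ be a surjective endomorphism with kernel $K$. We want to show $K=1$. The plan is to use the normal subgroup structure recalled in the introduction: every proper normal subgroup of $BV$ lies in the pure braid kernel $P$ \cite[Corollary~2.8]{Zaremsky2018}. Since $\phi$ is surjective and $BV\neq 1$, the kernel $K$ is proper, so $K\le P$.

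\textbf{Step 1: $\phi(P)=P$.} Consider the composite $BV\xrightarrow{\phi}BV\to V$. It is surjective, so its kernel $\phi^{-1}(P)$ is a proper normal subgroup and hence lies in $P$. Thus $\phi^{-1}(P)\le P$. The quotient $BV/\phi^{-1}(P)\cong V$ surjects from $BV/P\cong V$. This gives a surjection $V\to V$, and $V$ is simple, hence Hopfian, so this surjection is an isomorphism. Therefore $\phi^{-1}(P)=P$, and by surjectivity $\phi(P)=P$. Consequently $\phi$ descends to an automorphism $\bar\phi$ of $V$. Moreover $K\le P$, and $\phi|_P:P\to P$ is a surjection with kernel $K$. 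Since $P$ itself is non-Hopfian, Step 1 alone does not finish the proof; we must use the $V$-action.

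\textbf{Step 2: normalizing the automorphism of $V$.} By Rubin's theorem, $\Aut(V)$ is the normalizer of $V$ in $\Homeo(\Cantor)$. I would try to show that $\bar\phi$ lifts, meaning there is an automorphism $\psi$ of $BV$, or at least an explicit endomorphism, inducing $\bar\phi$. Then $\psi^{-1}\phi$ induces the identity on $V$. If lifting fails in general, I would instead work equivariantly: $\phi|_P$ intertwines the conjugation action of $BV$ on $P$ with its twist by $\phi$.

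\textbf{Step 3: $K$ is trivial.} $K$ is a normal subgroup of $BV$ contained in $P$, so it is invariant under the $V$-action on $P$ up to inner pure braids. Suppose $K\ne 1$. Take $1\ne k\in K$, supported (as a pure braid on some tree) on finitely many strands. Using elements of $F\le V$ that move dyadic intervals, conjugate $k$ into braids localized on disjoint small strands. Taking commutators with pure braid generators $A_{ij}$ then produces, inside $K$, a standard generator such as a full twist $A_{12}$ of two adjacent strands, or its cloned version. Since all $A_{ij}$ at all levels are conjugate under $BV$, this would give $K=P$. But $\phi(P)=P\neq 1$, which contradicts $K=P$.

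The main obstacle is this normal-closure computation in Step 3. Concretely, I must show that every nontrivial $BV$-normal subgroup of $P$ is all of $P$, or at least not contained in $\ker(\phi|_P)$ for a surjection. This is delicate because $P$ has a nontrivial center in each finite level and a residually nilpotent structure. I would first try to extract a nontrivial element in $K\cap[P,P]$ and then use the lower central series of the pure braid groups to push it to a generator. If that fails, I would compare abelianizations: show that $\phi$ induces a surjection of the $V$-module $P^{ab}$ (a free module on the generators $A_{ij}$ modulo cloning) that must be injective because $P^{ab}$ is a Noetherian-like module, and then pass up the lower central series using residual nilpotence of the finite pure braid groups.
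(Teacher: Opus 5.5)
Your Step~1 is correct and matches the paper's reduction (Lemma~\ref{lem:braided-epi-reduction}). One map there runs the wrong way: since $\phi^{-1}(P)\le P$, the natural surjection is $BV/\phi^{-1}(P)\twoheadrightarrow BV/P$.

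\textbf{The gap is in Step~3.} Its main strategy assumes that every nontrivial $BV$-normal subgroup of $P$ is all of $P$. This is false. Each $\gamma_k(P)$ with $k\ge2$ is characteristic in $P$, hence normal in $BV$. It is nontrivial, because it contains $\gamma_k(\PB_T)$ for a three-leaf tree $T$ and $\PB_3$ contains a free group of rank two. It is proper, because $\gamma_k(P)\subseteq[P,P]$ and linking numbers give a nonzero abelian quotient of $P$. For the same reason your localization-and-commutator trick cannot work. If $1\ne k\in K\cap\gamma_j(P)$, every conjugate, product and commutator built from $k$ stays in $\gamma_j(P)$, so you never reach a generator $A_{ij}$. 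Normal-closure arguments cannot separate $\ker(\phi|_P)$ from these subgroups; you must use surjectivity of $\phi|_P$ quantitatively.

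Your fallback (abelianization, then the lower central series) is the paper's direction, but four essential steps are missing.

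(i) $\phi|_P$ does not induce a $\mathbb ZV$-module map of $P_{\mathrm{ab}}$. It only satisfies $\bar\beta\,v_*=\alpha(v)_*\,\bar\beta$. To untwist it you need a model on which the Rubin homeomorphism $h$, with $\alpha(v)=hvh^{-1}$, acts, even though $h\notin V$. The paper uses pairwise linking numbers to identify $P_{\mathrm{ab}}$ with $M=C_c(Y,\mathbb Z)$, where $Y$ is the space of unordered pairs of distinct Cantor points. Then $h_*^{-1}\bar\beta$ is $V$-equivariant, and no lift of $\alpha$ to $BV$ (your Step~2) is needed.

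(ii) ``Noetherian-like'' is unjustified. The paper proves $\operatorname{End}_{\mathbb ZV}(M)=\mathbb Z$ using dense orbits of $V_A\times V_{A^c}$ on $Y$ and the identity $2e_{A,B}=c_A+c_B-c_{A\cup B}$. Surjectivity then forces $\beta_{\mathrm{ab}}=\pm h_*$.

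(iii) You need residual nilpotence of $P$ itself, not just of the $\PB_n$; directed unions do not inherit it in general. The paper obtains $\gamma_k(P)\cap\PB_T=\gamma_k(\PB_T)$ from the strand-deletion retractions of cloning.

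(iv) Most importantly, injectivity in degree one does not propagate to $\gr_k(\beta)$ for $k\ge2$. These pieces are infinitely generated, so surjectivity there gives nothing. The missing idea is finite realization: for finitely many $m_1,\dots,m_r\in M$ there is $v\in V$ with $v_*m_i=h_*m_i$ for all $i$. Conjugation by a lift $b\in BV$ of $v$ induces a graded automorphism $C_b$ of $\gr(P)$ with degree-one action $v_*$. So for $z=Q(m_1,\dots,m_r)\in\gr_k(P)$, with $Q$ a homogeneous Lie polynomial, one gets $\gr_k(\beta)(z)=(\pm1)^kC_b(z)\ne0$ whenever $z\ne0$. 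Theorem~\ref{thm:graded-detection} then gives injectivity of $\phi|_P$, hence of $\phi$.
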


\begin{athm}\label{thm:BF-intro}
The braided Thompson group $BF$ is Hopfian.
\end{athm}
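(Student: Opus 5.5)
The plan is to show that any surjection $\phi\colon BF\to BF$ preserves the pure braid kernel $P$ and induces an automorphism of $F$. This forces $\ker\phi\subseteq P$. Residual nilpotence of $P$ then reduces injectivity to a Hopf-type statement for the graded pieces of the lower central series of $P$, viewed as $F$-modules. Write $N=\ker\phi$ and $\pi\colon BF\to F$ for the forgetful map.

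\emph{Step 1: $\phi(P)\subseteq P$.} I would characterize $P$ intrinsically, for instance as the largest normal subgroup $K$ of $BF$ such that $BF/K$ has no nontrivial normal subgroup with trivial centralizer meeting it trivially. An alternative is to use that $P$ is generated by elements with large centralizers (pure braids supported in disjoint subtrees commute), while nontrivial elements of $F$ have small centralizers in $BF$. Then $\pi\circ\phi$ restricted to $P$ is a homomorphism from $P$ into $F$. It should be trivial because $P$ is the normal closure of a few pure braid generators, and commutation relations among disjoint clones pass to $F$, where such configurations are rigid. I expect this step to be the main obstacle. If the centralizer argument is too weak, I would fall back on the abelianization $\mathbb Z^4$ of $BF$ \cite[Lemma~1.7]{Zaremsky2018} together with the fact that every proper quotient of $F$ is abelian. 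Applied to $\pi\phi(P)\lhd F$, this shows that $\pi\phi(P)$ is either trivial or contains $[F,F]$. The second case should contradict $\phi(P)$ having image of infinite index, or should be ruled out by comparing abelianization ranks.

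\emph{Step 2: $N\subseteq P$.} Given Step 1, $\phi$ induces a surjection $\bar\phi\colon F\to F$. The group $F$ is Hopfian, since it is finitely generated and every proper quotient is abelian while $F$ is not. Hence $\bar\phi$ is an automorphism, so $N\subseteq P$. Moreover $\phi(P)=P$: indeed $\phi(P)$ is normal, and its image in $F$ is trivial while $\phi$ is onto.

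\emph{Step 3: $N\cap P=1$.} Let $\gamma_k$ denote the lower central series of $P$. The group $P$ is residually (torsion-free) nilpotent, since it is a directed union of pure braid groups, which are residually torsion-free nilpotent, along cloning maps that respect the filtration. So it suffices to show $N\subseteq\gamma_k(P)$ for every $k$, by induction on $k$. Each quotient $\gr_k(P)=\gamma_k/\gamma_{k+1}$ is a $\mathbb Z F$-module, with $F$ acting by conjugation through the splitting. The map $\phi$ induces additive surjections $\gr_k(P)\to\gr_k(P)$ that are $\bar\phi$-semilinear. I would show these are injective by using the explicit description of $\gr_1(P)=P^{ab}$ as the free abelian group on pairs of dyadic intervals, with $F$ acting on it, together with the fact that $\gr_k$ is a quotient of $\gr_1^{\otimes k}$ via Lie brackets (the Drinfeld--Kohno relations). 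In each degree, after identifying $\bar\phi$ with an automorphism of $F$ (which is induced by a homeomorphism of the interval, by Brin's description of $\Aut(F)$), the induced map permutes the natural basis of $\gr_1$ up to cloning identifications. I would then deduce injectivity of the semilinear surjection from the corresponding statement on finitely many finite-level pure braid Lie algebras, using a filtration by the number of leaves. If $N\subseteq\gamma_k$ and $\phi$ is injective on $\gr_k$, then $N\subseteq\gamma_{k+1}$. Finally $\bigcap_k\gamma_k(P)=1$ gives $N=1$.
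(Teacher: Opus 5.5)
Your proposal has two genuine gaps, and the second is the heart of the theorem.

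\textbf{Step 1.} None of your arguments actually proves $\phi(P)\subseteq P$. The fallback via normal subgroups of $F$ only says that $\pi\phi(P)$ is trivial or contains $[F,F]$. Nothing you list excludes, say, $\pi\phi(P)=[F,F]$. That subgroup already has infinite index. Moreover, $\phi_{\mathrm{ab}}$ is an automorphism of $BF_{\mathrm{ab}}\cong\mathbb Z^4$ that may simply carry the rank-two image of $P$ onto itself, so abelianization ranks give no contradiction. The centralizer and intrinsic-characterization ideas are not yet arguments.

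The missing input is Zaremsky's normal subgroup theorem for $BF$: every normal subgroup either lies in $P$ or contains $[BF,BF]$. Apply it to $\ker(\pi\circ\phi)$, not to $\phi(P)$. Its quotient is the nonabelian group $F$, so $\ker(\pi\phi)\subseteq P$. Hence $u(\pi\phi(g))=\pi(g)$ defines a surjection $u\colon F\to F$, which is an automorphism because $F$ is Hopfian. This gives $\phi^{-1}(P)=P$, and therefore $\ker\phi\subseteq P$ and $\phi(P)=P$ at once. Your justification of $\phi(P)=P$ in Step 2 only reproves $\phi(P)\subseteq P$; surjectivity onto $P$ needs injectivity of $\bar\phi$.

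\textbf{Step 3.} The claim that, after untwisting by the homeomorphism $h$ realizing $\bar\phi$, the degree-one map ``permutes the natural basis of $\gr_1$'' is exactly what must be proved. It cannot follow from equivariance. Untwisting gives a surjective $F$-equivariant endomorphism $\tau=h_*^{-1}\beta_{\mathrm{ab}}$ of $\gr_1(P)\cong M=C_c(Y,\mathbb Z)$, where $\beta=\phi|_P$. But $\operatorname{End}_{\mathbb ZF}(M)$ is free abelian of rank four. It is spanned by $I$ and three idempotents $L,R,J$, built from linking with the first leaf, linking with the last leaf, and sums of consecutive-pair linking numbers. It contains non-scalar automorphisms such as $I+L-R$, with inverse $I-L+R$. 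So the $F$-module structure of $\gr_1$ plus Brin's theorem cannot distinguish $h_*$ from $h_*(I+L-R)$.

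The paper eliminates $L,R,J$ with the Lie bracket. First, $\tau$ preserves every finite stage $M_T$, by invariance under the local subgroups on the leaves, and restricts to an automorphism there. So the untwisted graded map preserves $\gr(\PB_T)$. It must therefore send the central class $z_4=\sum_{i<j}t_{ij}$ of $\gr(\PB_4)$ to a degree-one class commuting with all of $\gr_1(\PB_4)$. By Cohen--Prassidis this is a multiple of $z_4$, and comparing the six coefficients forces $b=c=d=0$, so $\tau=\pm I$.

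Only then does one get $\gr_k(\beta)=\varepsilon^k\mathcal H_h$. Here $\mathcal H_h$ is the graded automorphism obtained by realizing $h_*$ on finitely many degree-one inputs by an element of $V$ and conjugating by a lift in $BV$. That is the concrete mechanism your ``finite-level'' argument would need.

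Finally, residual nilpotence of $P$ does not follow from cloning maps ``respecting the filtration,'' since every homomorphism does. It uses the strand-deletion retractions, which give $\gamma_k(P)\cap\PB_T=\gamma_k(\PB_T)$.
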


Theorems~\ref{thm:A} and~\ref{thm:BF-intro} provide examples in the
class of \emph{non-residually finite, torsion-free Hopfian groups of
type $F_\infty$}. A classical benchmark for this class is Thompson's
group $F$: Brown--Geoghegan established its type $F_\infty$ property
\cite{BrownGeoghegan1984}; its torsion-freeness and the simplicity
of its infinite commutator subgroup are recalled in
\cite{CannonFloydParry1996}. Its Hopficity is also proved below.
The groups $BV$ and $BF$ supply braided examples, containing all finite
braid groups and all finite pure braid groups, respectively. Their
torsion-freeness and type $F_\infty$ were already known from the
original sources cited above; the additional conclusion here is
Hopficity.

\smallskip
\noindent\textbf{The labeled Thompson group $V(G)$.}

The labeled Thompson group $V(G)$ extends Thompson's construction by
allowing labels from a discrete group $G$. It comes with a natural group
embedding $\iota_0:G\hookrightarrow V(G)$, and the assignment
$G\mapsto V(G)$ is a functor from groups to groups.

The following theorem combines the properties established by Thompson, Wu--Wu--Zhao--Zhou and Palmer--Wu \cite{Thompson80,WuWuZhaoZhou2025,PalmerWu2025}.

\begin{thm*}[\cite{Thompson80,WuWuZhaoZhou2025,PalmerWu2025}] For every discrete group $G$, the labeled Thompson group $V(G)$
and the natural embedding $\iota_0$ have the following properties.
\begin{enumerate}[label=(\arabic*),ref=\arabic*,leftmargin=*,itemsep=2pt]
\item\label{item:vg-embedding} The map $\iota_0$ is injective; when $G$
is finitely generated, it is a quasi-isometric embedding.
\item The embedding is Frattini.
\item For finitely generated $G$, $V(G)$
has solvable word problem if and only if $G$ does.
\item\label{item:vg-finiteness} For every integer $n\ge1$,$V(G)$ is of type $F_n$ if and only if $G$ is.
\item\label{item:vg-finite-index} $V(G)$
has no proper finite index subgroup.
\item  $V(G)$ is $5$-uniformly perfect.
\item\label{item:vg-bounded-acyclic} $V(G)$
is boundedly acyclic.
\item $V(G)$ is acyclic.
\end{enumerate}
\end{thm*}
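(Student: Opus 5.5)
This theorem is a compilation of results from \cite{Thompson80,WuWuZhaoZhou2025,PalmerWu2025}, so the plan is to reduce each item to the structure those sources use rather than to reprove them. I would model $V(G)$ as the group of homeomorphisms of the Cantor space $\Cantor\times G$-labeled trees: an element is a pair of finite binary trees with the same number of leaves, a bijection of their leaves, and a label $g_i\in G$ on each leaf, modulo the expansion rule that splitting a leaf copies its label to both children. Then $\iota_0(g)$ is the element with trivial trees and root label $g$.

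\textbf{Item~(\ref{item:vg-embedding}) and Frattini.} Injectivity is immediate from the labeled-tree normal form. For the Frattini property, I would show that two elements of $\iota_0(G)$ that are conjugate in $V(G)$ are already conjugate in $G$. The idea is to compare leaf labels after a common expansion, using that conjugating by an element of $V$ permutes leaves and multiplies labels. For the quasi-isometric embedding, I would construct a $1$-Lipschitz retraction-like map that records the product of labels along a chosen leaf, or use a length function on $V(G)$ that bounds $|g|_G$ from above.

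\textbf{Word problem.} Given a word, I would reduce it to a normal form by expansions. Equality then reduces to finitely many label comparisons in $G$, together with combinatorics of $V$. Conversely, $G$ embeds via $\iota_0$, which gives the other direction.

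\textbf{Item~(\ref{item:vg-finiteness}).} This is the main obstacle. I would follow the Stein--Farley / Brown approach. Build the contractible cube complex of labeled forests on which $V(G)$ acts, with cell stabilizers finite extensions of $G^k$ (labeled permutations, i.e.\ $G\wr S_k$). Filter the complex by the number of leaves, and show the descending links are highly connected: they are matching-type complexes whose connectivity grows with the number of leaves. Brown's criterion then gives type $F_n$ when $G$ is. The converse follows because $G$ is a retract, or by a Bieri-type argument using that stabilizers of type $F_n$ are forced.

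\textbf{Remaining items.} For items (\ref{item:vg-finite-index}) and 6, I would show that the commutator-type subgroup generated by elements supported on small cones is simple and equals $V(G)$. This follows from the Epstein/Higman argument for $V$ adapted to labels, with uniform perfectness obtained from explicit commutator expressions of bounded length. Then no finite quotients exist, since a finite quotient of a group with a simple finite-index normal subgroup that is perfect and infinite must be trivial. For item (\ref{item:vg-bounded-acyclic}) and acyclicity, I would use that $V(G)$ has commuting conjugates, i.e.\ it is a group of compactly supported-type homeomorphisms with a displacement property. The Fournier-Facio--Löh--Moraschini criterion then gives bounded acyclicity, and a binate/dissipated-group argument in the style of Berrick and Varadarajan gives acyclicity.
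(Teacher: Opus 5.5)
The paper gives no proof of this statement; it is quoted from Thompson, Wu--Wu--Zhao--Zhou and Palmer--Wu, so deferring to those sources is consistent with the paper. Your sketches for (1)--(3) and for the ``if'' half of (4) are reasonable. However, three of your other steps rest on claims about $V(G)$ that are false.

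\textbf{Items (5) and (6).} $V(G)$ is not simple. For $G\neq1$ the label kernel $K=C(\Cantor,G)=\ker(\pi\colon V(G)\to V)$ is a proper nontrivial normal subgroup. So no subgroup can be ``simple and equal to $V(G)$'', and your route to (5) and (6) fails. A doubling trick works instead. Let $\theta$ be a homomorphism from $V(G)$ to a finite or abelian group. Since $V$ is infinite, simple and perfect, $\theta$ kills $V$, so $\theta(g_U)=\theta(g_{vU})$ for all $v\in V$. Split a nonempty proper clopen $U=U_0\sqcup U_1$; since $U,U_0,U_1$ lie in one $V$-orbit, $g_U=g_{U_0}g_{U_1}$ gives $\theta(g_U)=\theta(g_U)^2=1$. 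As $V$ and the $g_U$ generate $V(G)$, there are no nontrivial finite or abelian quotients, and passing to normal cores gives (5). Similarly, $g_U=[v_0,g_U]^{-1}[v_1,g_U]^{-1}$ when $v_i(U)=U_i$. Explicit identities of this kind are what (6) needs for the label part, handled separately from $V$.

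\textbf{Items (7) and (8).} $V(G)$ neither has commuting conjugates nor is binate. If $G$ is finitely generated, so is $V(G)$. Taking $H=V(G)$ in either definition then forces $V(G)$ to be abelian, respectively trivial. For arbitrary $G$, the finitely generated subgroup $V$ has centralizer $\{g_X:g\in G\}\subseteq K$ (constant labels, $X=\Cantor$). No conjugate $tVt^{-1}$ can lie there, since $\pi(tVt^{-1})=V$. In the binate condition, $v=[t,\phi(v)]$ with $\phi(v)\in K$ would force $v\in K$. These criteria only apply to groups far from finitely generated, such as compactly supported homeomorphism groups. Already for $V$, acyclicity and bounded acyclicity are nontrivial theorems proved by other methods, so (7) and (8) need genuinely different input.

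\textbf{The ``only if'' half of (4).} $G$ is not a retract of $V(G)$. A retraction $r$ with $r\circ\iota_0=\mathrm{id}_G$ would be a surjection from the perfect group $V(G)$ onto $G$, which is impossible for $G=\mathbb Z$. The alternative ``Bieri-type argument'' is not spelled out. Finiteness properties do not in general pass to cell stabilizers, least of all for a non-cocompact action such as the Stein--Farley one. What works is a coarse version of your retraction. Fix $x_0\in\Cantor$ and put $r(f,v)=f(x_0)$. Since $(f,v)(k,w)=(f\,(v\cdot k),vw)$, right multiplication by a generator $(k,w)$ multiplies $r$ on the right by $k(v^{-1}x_0)$, which ranges over a finite set. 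So $r$ is Lipschitz for word metrics, and $r\circ\iota_0=\mathrm{id}_G$ for your constant-label embedding. This gives the quasi-isometric embedding in (1). It also shows $G$ is a quasi-retract of $V(G)$, so Alonso's theorem (type $F_n$ passes to quasi-retracts) applies. You must first check $n=1$: if $V(G)$ is finitely generated, the labels of finitely many generators generate some $H\le G$. Every label of every element then lies in $H$, so $H=G$.

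\textbf{The ``if'' half of (4).} The descending links are not plain matching complexes. Each merge of two leaves carries an independent relative label in $G$; already for two leaves the descending link has $2|G|$ vertices. Their connectivity therefore needs a complete-join type argument on top of the matching-complex bound.
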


Wu--Wu--Zhao--Zhou formulate the following Hopficity conjecture in
the introduction to their paper
\cite{WuWuZhaoZhou2025}.
\begin{conj*}[Wu--Wu--Zhao--Zhou]
The labeled Thompson group $V(G)$
is Hopfian if and only if $G$ is Hopfian.
\end{conj*}
The next theorem proves this conjecture.

\begin{athm}\label{thm:C}
The labeled Thompson group $V(G)$ is Hopfian if
and only if $G$ is Hopfian.
\end{athm}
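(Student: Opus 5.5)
The forward direction uses functoriality of $G\mapsto V(G)$ together with injectivity of $\iota_0$. Suppose $G$ is not Hopfian, and let $\phi\colon G\to G$ be a surjection with nontrivial kernel $N$. The induced map $V(\phi)\colon V(G)\to V(G)$ is again surjective, because it is surjective on the label data and is the identity on trees and permutations. It satisfies $V(\phi)\circ\iota_0=\iota_0\circ\phi$. So $\iota_0(N)\neq 1$ lies in its kernel, and $V(G)$ is not Hopfian.

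For the converse I will use the structure of $V(G)$. Forgetting labels gives an extension $1\to K\to V(G)\to V\to 1$. Here $K$ is the directed union of the groups $G^n$ under diagonal cloning, i.e.\ the group of locally constant functions $\Cantor\to G$, and $V$ acts on $K$ by precomposition.

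\textbf{Step 1.} I will show that every normal subgroup $M\not\le K$ equals $V(G)$. Since $V$ is simple, the image of such an $M$ in $V$ is all of $V$. In $V(G)/M$ the image of $K$ therefore generates the quotient, and each $f\in K$ is identified with all of its $V$-translates. Take $f,h\in K$ with supports $U$ and $U'$, and pick $v\in V$ moving $U$ off $U'$. Then $f\equiv vfv^{-1}$, and $vfv^{-1}$ commutes with $h$. Hence the image of $K$ is abelian, so $V(G)/M$ is abelian. It is also perfect (being a quotient of the perfect group $V(G)$), hence trivial. Consequently $K$ is the unique maximal proper normal subgroup.

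\textbf{Step 2.} Let $\psi$ be a surjective endomorphism. The preimage $\psi^{-1}(K)$ is a proper normal subgroup, so by Step 1 it lies in $K$; in particular $\ker\psi\le K$. The map $\psi$ therefore induces a surjection $V\to V$, which is an automorphism $\alpha$ because $V$ is simple. This forces $\psi^{-1}(K)=K$, so $\psi$ restricts to a surjection $K\to K$ satisfying $\psi(v\cdot f)=\alpha(v)\cdot\psi(f)$.

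\textbf{Step 3 (main obstacle).} I must show $\ker\psi=1$ using only that $G$ is Hopfian. For $x\in\Cantor$ let $K_x$ be the subgroup of functions vanishing near $x$, so that $K/K_x\cong G$ via the germ at $x$. I first want to show that $\psi$ maps $K_x$ into $K_{y}$ for some point $y=\beta(x)$. The intended argument is to characterise $K_x$ intrinsically: it should be the union of rigid stabilisers of clopen sets avoiding $x$, detected via commutation with the images of point stabilisers in $V$. Since $\alpha$ is induced by a homeomorphism of $\Cantor$ by Rubin's theorem, it preserves such structure. Granting this, $\psi$ induces surjections $G\cong K/K_x\to K/K_{\beta(x)}\cong G$, which are injective because $G$ is Hopfian. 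So if $f\in\ker\psi$, the germ of $f$ at every point is trivial, and hence $f=1$.

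The delicate points are twofold. First, one must check that $\psi(K_x)\le K_{\beta(x)}$, and not merely that germs are preserved up to the twist. Second, surjectivity of the induced germ map has to be verified pointwise. If the intrinsic characterisation fails, I would fall back on a different argument. From commutators $[f,h]$ with $h$ of small support, any normal subgroup $M\le K$ satisfies $C(\Cantor,[N,G])\le M\le C(\Cantor,N)$, where $N$ is the normal subgroup generated by the values of elements of $M$. I would then run the Hopfian argument on $G$ using this sandwich.
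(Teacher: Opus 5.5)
\textbf{Verdict: there is a genuine gap in Step~3.} Your proof that a non-Hopfian $G$ gives a non-Hopfian $V(G)$ is the paper's argument.

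\textbf{Steps 1--2.} These reach the reduction of Lemma~\ref{lem:vg-base-preservation} by a different but workable route. You prove that $K$ is the unique maximal normal subgroup, as for $BV$; the paper instead uses the commuting normal subgroups $q(K_{X_0}),q(K_{X_1})$ of $V$. Two corrections are needed.
\begin{enumerate}[label=(\alph*)]
\item In $V(G)/M$ you only know $kv\in M$ for some $k\in K$. So $f$ is congruent to $k(v\cdot f)k^{-1}$, not to $v\cdot f$. Conjugation in $K$ is pointwise, so this element is still supported in $vU$. Your commutation argument therefore survives, after first splitting $f,h$ into pieces with proper support.
\item Write $\psi(1,v)=(c(v),\alpha(v))$. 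The correct relation is $\psi(v\cdot f)=c(v)\bigl(\alpha(v)\cdot\psi(f)\bigr)c(v)^{-1}$. The cocycle $c$ cannot be dropped when $G$ is nonabelian.
\end{enumerate}

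\textbf{The gap.} The inclusion $\psi(K_x)\le K_{\beta(x)}$ is the whole content of the hard direction, and you only grant it. The route you suggest is unlikely to work as stated, for three reasons.
\begin{enumerate}[label=(\roman*)]
\item Since $\psi$ is merely surjective, an intrinsic description of $K_U$ by centralisers or fixed subgroups transfers only as a one-sided containment. That containment lands in a subgroup described by the \emph{images} of the defining data, and those images are twisted by $c(v)$.
\item Commutation cannot separate $K_U$ from $K_U\cdot C(X\setminus U,Z(G))$. For abelian $G$, the group $K$ is abelian and commutation detects nothing.
\item Your fallback sandwich does not close the gap. If the values of $\ker\psi$ generate a central $N$, it reads $1\le\ker\psi\le C(X,N)$. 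Nothing in it produces a surjective noninjective endomorphism of $G$.
\end{enumerate}

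\textbf{The missing idea.} This is the paper's Lemma~\ref{lem:vg-locality}.
\begin{itemize}
\item Normalise $\alpha=\mathrm{id}$ using Rubin's theorem and the automorphism $T_h$. Fix $x$ and consider the surjection $\rho_x(f)=\psi(f)(x)$ onto $G$.
\item For clopen $U\not\ni x$, the image $\rho_x(K_U)$ is normal in $G$. Evaluating the twisted equivariance at $x$ turns the cocycle into conjugation by $c(v)(x)\in G$. Transitivity of $V_x$ on clopens avoiding $x$ then makes all these images a single normal subgroup $J_x$.
\item Two disjoint such $U,D$ show that $J_x$ is abelian. The decomposition $G=\rho_x(K_U)\rho_x(K_{X\setminus U})$ then shows that $J_x$ is central.
\item Hence $t=\rho_x(g_U)$ is independent of $U$. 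Splitting $U=U_0\sqcup U_1$ gives $t=t^2$, so $t=1$.
\end{itemize}
This self-similarity step removes exactly the central part that commutation arguments cannot see. It yields $\psi(f)(x)=\vartheta_x(f(x))$ with each $\vartheta_x$ surjective (Lemma~\ref{lem:vg-fibers}). After that, your final Hopfian argument goes through.
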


Combining Theorem~\ref{thm:C} with the embedding and finiteness
results of Wu--Wu--Zhao--Zhou and Palmer--Wu \cite{WuWuZhaoZhou2025,PalmerWu2025} gives the
following Hopfian refinement of their embedding construction.

\begin{acor}\label{cor:hopfian-embedding}
Let $n\ge1$ be an integer, and let $G$ be a Hopfian group of type $F_n$.
Then $G$ admits a quasi-isometric embedding into a Hopfian, boundedly
acyclic, acyclic group of type $F_n$ having no proper finite
index subgroup.
\end{acor}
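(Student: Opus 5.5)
The corollary follows by taking the group $V(G)$ itself as the ambient group and reading off its properties from Theorem~\ref{thm:C} and the theorem of Thompson, Wu--Wu--Zhao--Zhou and Palmer--Wu quoted above. Fix $n\ge1$ and a Hopfian group $G$ of type $F_n$, and consider the natural embedding $\iota_0:G\hookrightarrow V(G)$. Every required property of the target group is then a property of $V(G)$ that is either already known or follows from Theorem~\ref{thm:C}.

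I would check the properties in the order of the statement.
\begin{enumerate}
\item \emph{Quasi-isometric embedding.} Since $G$ is of type $F_n$ with $n\ge1$, it is finitely generated. Item~(\ref{item:vg-embedding}) then says that $\iota_0$ is injective and a quasi-isometric embedding.
\item \emph{Finiteness.} Item~(\ref{item:vg-finiteness}) gives that $V(G)$ is of type $F_n$, because $G$ is. In particular $V(G)$ is finitely generated, so quasi-isometric embedding is meaningful with respect to word metrics on both groups.
\item \emph{Acyclicity and finite-index subgroups.} Item~(\ref{item:vg-bounded-acyclic}) and the acyclicity item give that $V(G)$ is boundedly acyclic and acyclic. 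Item~(\ref{item:vg-finite-index}) gives that $V(G)$ has no proper finite-index subgroup. All three hold for arbitrary discrete $G$.
\item \emph{Hopficity.} Since $G$ is Hopfian, the ``if'' direction of Theorem~\ref{thm:C} gives that $V(G)$ is Hopfian.
\end{enumerate}

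All of the substantive work sits in Theorem~\ref{thm:C}, specifically its direction ``$G$ Hopfian $\Rightarrow$ $V(G)$ Hopfian''. Once that is established, the corollary is a bookkeeping combination of already-stated results. The only point needing care is that the quasi-isometric part of item~(\ref{item:vg-embedding}) requires finite generation of $G$, and this is supplied by the hypothesis $n\ge1$.
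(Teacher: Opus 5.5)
Your proposal is correct and matches the paper's argument, which it gives only as the one-line remark preceding the corollary. That argument takes $V(G)$ with the natural embedding $\iota_0$ and combines Theorem~\ref{thm:C} with the quoted properties: quasi-isometric embedding for finitely generated $G$, preservation of type $F_n$, no proper finite-index subgroups, bounded acyclicity, and acyclicity. You correctly note that the hypothesis $n\ge1$ supplies the finite generation needed for the quasi-isometric part.
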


The labeled Thompson group $V(G)$ is also non-residually finite,
since it contains the infinite simple group $V$. Unlike the braided
examples above, it contains nontrivial torsion, already in its
label-free copy of $V$.

The two classes of Thompson-type groups considered here also exhibit markedly different bounded
cohomological behavior. Fournier-Facio, Lodha and Zaremsky proved that
$BV$ and $BF$ have infinite-dimensional second bounded cohomology
\cite{FournierFacioLodhaZaremsky2024}, whereas the labeled groups $V(G)$
are boundedly acyclic \cite{WuWuZhaoZhou2025}. Our Hopficity criterion shows
that the labeled construction preserves Hopficity while producing an
acyclic, boundedly acyclic overgroup with the same finiteness properties
of type $F_n$.

It is also worth mentioning that Hopficity of wreath products provides a related perspective.
Bradford and Fournier-Facio
\cite[Corollary~4.3]{BradfordFournierFacio2024}
proved that, for a free abelian group $A$ of positive finite rank
and a finitely generated group $\Gamma$, the restricted wreath product
$A\wr\Gamma$ is Hopfian if and only if $\Gamma$ is Hopfian.
Its kernel $A^{(\Gamma)}$ is nevertheless non-Hopfian whenever
$\Gamma$ is infinite. For general finitely generated abelian base groups,
they connected the corresponding universal Hopficity statement to
Kaplansky's stable finiteness conjecture.
Kochloukova \cite{Kochloukova2026} subsequently established criteria
for permutational wreath products under additional hypotheses.

These works and the present paper study how quotient actions constrain
endomorphisms of non-Hopfian kernels. Their criteria for abelian kernels
use module endomorphism rings and direct finiteness.
Our labeled Thompson construction yields a different criterion:
$V(G)$ is Hopfian precisely when the label group $G$ is Hopfian,
without assuming finite generation or commutativity of $G$.
Its proof uses the local structure of the Cantor action.
For the braided groups, we combine equivariant endomorphism calculations
with the Lie ring structure and residual nilpotence of the pure braid kernel.

\subsection*{AI disclosure.}

Following the work in \cite{WuWuZhaoZhou2025}, the author continued to investigate further properties of $V(G)$, including Hopficity. This research had already yielded partial results before the AI-assisted discussions described below. In September 2026, the author used the OpenAI's ChatGPT model 6.0 Astra while investigating a normal subgroup problem of the labeled Thompson group $V(G)$. Discussions with the model helped the author identify a method for proving injectivity of the restriction of a surjective endomorphism to the kernel $K$, under the assumption that $G$ is Hopfian. The author then observed that the pure braid kernel $P$ in the braided Thompson groups has a similar direct limit structure and proposed extending the approach to this setting. Further AI-assisted discussions contributed to developing a method for proving injectivity of the corresponding restrictions to $P$. All mathematical content was carefully reviewed, verified, and substantially revised by
the author, who takes full responsibility for the final text.

\subsection*{Acknowledgments.}

The author thanks Xiaolei Wu for helpful discussions.

\section{Preliminaries}
\label{sec:cantor-preliminaries}

All groups in this paper are discrete. We use the
commutator convention $[a,b]=aba^{-1}b^{-1}$. For subgroups $A,B$ of a group, $[A,B]$ is the subgroup generated by all commutators $[a,b]$ with $a\in A$ and $b\in B$. For a group $H$, its \emph{abelianization} is
$H_{\mathrm{ab}}=H/[H,H]$, and its center is
$Z(H)=\{z\in H:zh=hz\text{ for every }h\in H\}$.

This section collects the group models and structural facts used by both
braided Thompson groups $BV$, $BF$ and the labeled Thompson group $V(G)$.

\subsection{Cantor partitions and the classical Thompson groups \texorpdfstring{$V$ and $F$}{V and F}}
\label{subsec:cantor-quotients}

\begin{defn}\label{def:cantor-v}
Let $\Cantor=\{0,1\}^{\mathbb N}$, with the product topology obtained from
the discrete topology on $\{0,1\}$. This is a compact Hausdorff space
without isolated points. If $u$ is a finite binary word, the
\emph{cylinder} $[u]=\{u\xi:\xi\in\Cantor\}$ consists of the infinite
words beginning with $u$.
The empty word has cylinder $\Cantor$. All cylinders are clopen, and form a basis of the topology. A \emph{partition} is a finite family of pairwise disjoint cylinders
whose union is $\Cantor$. Thompson's group $V$ consists of all homeomorphisms described by
two partitions $[u_1],\ldots,[u_n]$ and $[w_1],\ldots,[w_n]$, and the
rules
\[
 u_i\xi\longmapsto w_i\xi\qquad(\xi\in\Cantor).
\]
The indexing allows an arbitrary permutation of the cylinders. Subdividing
matched source and target cylinders into their two children leaves the
homeomorphism unchanged.
\end{defn}

Every clopen subset is a finite union of cylinders, by compactness.
A \emph{finite rooted binary tree} is a finite tree with a distinguished
root in which every vertex that is not a leaf has two children, designated
left and right. The geodesics
from the root to a leaf determine 
leaf cylinders which form a partition. Conversely, every partition
arises in this way. Replacing a leaf by two children is called adding a
\emph{caret}; it replaces $[u]$ by $[u0]\sqcup[u1]$, where $\sqcup$
denotes disjoint union. A partition \emph{refines} another if each of its
members is contained in a member of the other. Any two partitions
have a common refinement.

The Thompson group $V$ is simple. Let us recall the following basic facts for $V$. 
 
\begin{lem}\label{lem:partition-matching}
Suppose $(A_1,\ldots,A_r)$ and $(B_1,\ldots,B_r)$ are partitions of $\Cantor$
into nonempty clopen subsets. There is $v\in V$ such that $v(A_i)=B_i$ for
every $i$. Consequently, $V$ is transitive on nonempty proper clopen subsets.
If $U$ is clopen, the analogous assertion holds for partitions of $U$, with
$v$ equal to the identity on $\Cantor\setminus U$.
\end{lem}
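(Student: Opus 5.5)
The plan is to reduce everything to cylinders, and then match cylinders one at a time by prefix replacement. The basic observation is this. For finite binary words $u,w$, the rule $u\xi\mapsto w\xi$ is a homeomorphism $[u]\to[w]$. Therefore any bijection between two partitions into the same number of cylinders defines an element of $V$, by Definition~\ref{def:cantor-v}. So it suffices to refine the given clopen partitions until the two sides consist of cylinders and the piece counts agree piece by piece.

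First, each $A_i$ and each $B_i$ is a finite union of cylinders, since every clopen set is one. Being nonempty, each is a union of $a_i\ge1$ (respectively $b_i\ge1$) pairwise disjoint cylinders. Choosing disjoint cylinders is possible because two cylinders are either nested or disjoint, so we may discard redundant ones. Next, the key step is to equalize the counts. Replacing a cylinder $[u]$ by $[u0]\sqcup[u1]$ increases the count by one and keeps a decomposition into disjoint cylinders. Hence for each $i$ we can subdivide until $A_i$ and $B_i$ are both unions of exactly $\max(a_i,b_i)$ disjoint cylinders. Concatenating these lists over $i$ gives two partitions of $\Cantor$ into cylinders of the same total size. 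Match the cylinders inside $A_i$ bijectively with those inside $B_i$. The resulting element $v\in V$ satisfies $v(A_i)=B_i$ for every $i$.

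For transitivity, let $U,U'$ be nonempty proper clopen sets. Then apply the first part to the partitions $(U,\Cantor\setminus U)$ and $(U',\Cantor\setminus U')$, whose members are all nonempty. For the relative statement, take partitions $(A_i)$ and $(B_i)$ of $U$, and perform the same cylinder matching inside $U$. Then extend by the identity on $\Cantor\setminus U$: write $\Cantor\setminus U$ as a disjoint union of cylinders $[c_j]$ and use the rule $c_j\xi\mapsto c_j\xi$. The combined rule is given by two cylinder partitions of $\Cantor$ with a bijection, so it lies in $V$. The edge case $U=\emptyset$ is trivial.

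The only point needing care is the existence of disjoint cylinder decompositions with prescribed, arbitrarily large counts. The nested-or-disjoint property and caret subdivision handle this, so there is no real obstacle.
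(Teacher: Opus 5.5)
Your proof is correct. Splitting each $A_i$ and $B_i$ into disjoint cylinders works because cylinders are nested or disjoint, and adding carets equalizes the counts $a_i,b_i$. The prefix replacement matching the cylinders of $A_i$ with those of $B_i$ is then an element of $V$ directly from Definition~\ref{def:cantor-v}, and adding identity rules on a cylinder decomposition of $\Cantor\setminus U$ gives the relative version.

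The paper gives no proof of this lemma; it only recalls it as a standard fact about $V$. Your argument is the standard justification, and it uses only facts the paper records: clopen sets are finite unions of cylinders, and $V$ consists of prefix replacements between cylinder partitions.
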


For a homeomorphism $v$ of $\Cantor$, its support is the closure of
$\{x:v(x)\neq x\}$. For a clopen subset $U\subseteq\Cantor$, the
\emph{local subgroup}, consisting of elements supported in $U$, is
\[
 V_U=\{v\in V:v|_{\Cantor\setminus U}=\mathrm{id}\}.
\]
If $U\neq\varnothing$, then $V_U$ is isomorphic to $V$.

\begin{lem}\label{lem:local-density}
For every nonempty clopen $U$, the action of $V_U$ on $U$ has dense orbits.
Its diagonal action on ordered pairs of distinct points of $U$ also has dense
orbits. The same holds on unordered pairs of distinct points.
\end{lem}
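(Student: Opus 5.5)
The plan is to reduce all three assertions to Lemma~\ref{lem:partition-matching}, applied inside $U$, together with the fact that cylinders inside $U$ form a basis of its topology. It suffices to treat ordered pairs, since density for ordered pairs implies density for their images under the quotient map to unordered pairs.

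\emph{Orbits of points.} Fix $x,y\in U$ and a basic neighbourhood of $y$, namely a cylinder $[w]\subseteq U$ containing $y$. Choose a cylinder $[u]\subseteq U$ containing $x$. I want $v\in V_U$ with $v(x)\in[w]$. If $[u]=U$, or more generally if $[u]$ and $[w]$ are both proper in $U$, I partition $U$ as $[u]\sqcup(U\setminus[u])$ and as $[w]\sqcup(U\setminus[w])$; when needed, I first shrink $[u]$ and $[w]$ to proper subcylinders that still contain $x$ and $y$. Then Lemma~\ref{lem:partition-matching}, in its relative form for partitions of $U$, gives $v\in V_U$ with $v([u])=[w]$, and hence $v(x)\in[w]$. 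Since $\Cantor$ has no isolated points, shrinking a cylinder to a proper subcylinder containing a given point is always possible, so no degenerate case remains.

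\emph{Ordered pairs.} Fix distinct $x_1,x_2\in U$, distinct $y_1,y_2\in U$, and basic neighbourhoods $[w_1],[w_2]$ of $y_1,y_2$ inside $U$. By refining, I may assume $[w_1]\cap[w_2]=\varnothing$: since $y_1\neq y_2$, sufficiently long prefixes separate them. Likewise I choose disjoint cylinders $[u_1]\ni x_1$ and $[u_2]\ni x_2$ in $U$. After shrinking further, neither $[u_1]\sqcup[u_2]$ nor $[w_1]\sqcup[w_2]$ is all of $U$; this uses again that there are no isolated points, for instance by replacing $[u_1]$ by a proper subcylinder containing $x_1$. This makes the complements $U\setminus([u_1]\sqcup[u_2])$ and $U\setminus([w_1]\sqcup[w_2])$ nonempty clopen sets. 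The two three-piece partitions of $U$ are then matched by Lemma~\ref{lem:partition-matching}, which produces $v\in V_U$ with $v(x_i)\in[w_i]$ for $i=1,2$.

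The only real subtlety, and the main point to get right, is the bookkeeping that guarantees the complementary pieces are nonempty, since the lemma requires partitions into nonempty clopen sets. This is handled uniformly by the shrinking step, which relies on the absence of isolated points.
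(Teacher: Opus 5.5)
Your proof is correct and complete. The paper gives no proof of this lemma; it simply recalls it as a basic fact about $V$, so there is no argument of the paper's to compare with. Your reduction to the relative form of Lemma~\ref{lem:partition-matching} is the standard one:
\begin{itemize}
\item for points, you use two-piece partitions of $U$, and for ordered pairs, three-piece partitions;
\item you shrink cylinders first, using that $\Cantor$ has no isolated points, so that every piece is nonempty;
\item you pass to unordered pairs because $q$ is a continuous surjection that intertwines the two $V_U$-actions.
\end{itemize}
This establishes that every orbit is dense, which is the sense in which the paper later uses the lemma in Lemma~\ref{lem:scalar-commutant}.

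The only blemish is the garbled clause ``If $[u]=U$, or more generally if $[u]$ and $[w]$ are both proper in $U$''. It should simply say that you first shrink $[u]$ and $[w]$ to proper subcylinders of $U$ containing $x$ and $y$.
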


The classical definition of Thompson group $F$ is described by a subgroup of $Homeo([0,1])$, see \cite{CannonFloydParry1996}. Here, we view $F$ as a subgroup of $V$ consisting of order-preserving Homeomorphisms.

\begin{defn}\label{def:cantor-f}
Give $\Cantor=\{0,1\}^{\mathbb N}$ the \emph{lexicographic order}:
two distinct words are compared at their first differing digit, with
$0<1$. A cylinder is convex for this order. Write $A<B$ for two
disjoint subsets when every point of $A$ precedes every point of $B$.
The subgroup $F\le V$ consists of prefix replacements that preserve
this order. Thus, for ordered partitions
$A_1<\cdots<A_n$ and $B_1<\cdots<B_n$, its tables match $A_i$ with
$B_i$ for each $i$, without permuting indices.
\end{defn}

\begin{lem}\label{lem:f-hopfian}
Thompson's group $F$ is Hopfian.
\end{lem}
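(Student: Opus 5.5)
We prove Lemma~\ref{lem:f-hopfian} from the normal subgroup structure of $F$, taking the classical facts recalled in \cite{CannonFloydParry1996} as given. Every proper quotient of $F$ is abelian; equivalently, every nontrivial normal subgroup of $F$ contains the commutator subgroup $[F,F]$. Moreover, $F_{\mathrm{ab}}\cong\mathbb Z^2$, and $F$ is nonabelian. (Alternatively, one could invoke residual finiteness, but $F$ is not residually finite, so we avoid that route.)

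Let $\phi:F\to F$ be surjective and put $N=\ker\phi$. Then $F/N\cong F$. Suppose, for contradiction, that $N\neq 1$. By the normal subgroup fact just quoted, $N\supseteq[F,F]$, so $F/N$ is a quotient of $F_{\mathrm{ab}}$ and hence abelian. But $F/N\cong F$ is nonabelian. This contradiction shows $N=1$, so $\phi$ is injective.

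The only real input is the claim that every nontrivial normal subgroup of $F$ contains $[F,F]$, and this is the step I expect to need the most justification. If it is not simply cited, I would derive it as follows. First, $[F,F]$ is simple; this is recalled in the introduction. Second, if $N\lhd F$ is nontrivial, then $N\cap[F,F]\neq1$. To see this, take $1\neq n\in N$ and $g\in[F,F]$ supported in a small interval that $n$ moves off itself. Then $[n,g]$ is a nontrivial element of $N\cap[F,F]$. Since $N\cap[F,F]$ is normal in the simple group $[F,F]$, it equals $[F,F]$, so $[F,F]\le N$, as required.

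A fallback route avoids this and uses Hopficity of $\mathbb Z^2$ instead. Since $[F,F]$ is characteristic, $\phi$ induces a surjection $\mathbb Z^2\to\mathbb Z^2$, which is an isomorphism. Hence $N\le[F,F]$, and simplicity of $[F,F]$ gives $N=1$ or $N=[F,F]$. The case $N=[F,F]$ is excluded because $F/N$ would then be abelian.
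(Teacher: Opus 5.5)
Your argument is correct and is the same as the paper's: both use the classical fact \cite[Theorem~4.3]{CannonFloydParry1996} that $F/N$ is abelian for every nontrivial $N\lhd F$, so a nontrivial kernel would make $F\cong F/\ker\phi$ abelian. Your supplementary derivation of that fact from simplicity of $[F,F]$, and your fallback via Hopficity of $F_{\mathrm{ab}}\cong\mathbb Z^2$, are also sound but not needed.
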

\begin{proof}
The classical normal-subgroup theorem says that $F/N$ is abelian
whenever $N\lhd F$ is nontrivial
\cite[Theorem~4.3]{CannonFloydParry1996}. If a surjective
endomorphism $u:F\to F$ had nontrivial kernel, the isomorphism
$F/\ker u\cong F$ would make $F$ abelian. Since $F$ is nonabelian,
$\ker u=\{1\}$.
\end{proof}

\subsection{Rubin's reconstruction}\label{subsec:quotient-spatial-realization}
Rubin’s reconstruction theorem states that every abstract automorphism between locally dense groups of homeomorphisms of locally compact Hausdorff spaces without isolated points is induced by conjugation by a unique homeomorphism between the spaces \cite{Rubin1989}. In this subsection, we give two special cases of this theorem which give us a description of the automorphisms of Thompson group $V$ and $F$.

\begin{thm}[\cite{Rubin1989,BleakCameronMaisselNavasOlukoya2024}]\label{thm:rubin}
Every automorphism $\alpha\in\Aut(V)$ is induced by a homeomorphism
$h\in\Homeo(\Cantor)$:
\[
 \alpha(v)=hvh^{-1}\qquad(v\in V).
\]

\end{thm}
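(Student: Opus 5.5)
The plan is to obtain the statement as a special case of Rubin's theorem, so the work is in checking its hypotheses for $V$ acting on $\Cantor$. First, $\Cantor$ is compact Hausdorff with no isolated points (Definition~\ref{def:cantor-v}), so it is locally compact Hausdorff and perfect. Second, $V$ must be \emph{locally dense}: for every nonempty open $W\subseteq\Cantor$ and every $x\in W$, the closure of the orbit of $x$ under the rigid stabilizer $\{v\in V:\operatorname{supp} v\subseteq W\}$ must contain a neighbourhood of $x$. Since cylinders form a basis, choose a cylinder $U=[u]$ with $x\in U\subseteq W$. The local subgroup $V_U$ lies in the rigid stabilizer of $W$, and Lemma~\ref{lem:local-density} says $V_U$ has dense orbits in $U$. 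Hence the orbit closure of $x$ contains the open set $U$, which is what local density requires.

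Given these two checks, Rubin's theorem \cite{Rubin1989} applies to $\alpha\colon V\to V$, where the source and target are the same group acting on the same space. It yields a homeomorphism $h\colon\Cantor\to\Cantor$ with $\alpha(v)=hvh^{-1}$ for all $v\in V$. Uniqueness of $h$ follows from the same statement, and also directly: if $h$ and $h'$ both work, then $h^{-1}h'$ centralizes $V$. Any homeomorphism centralizing $V$ preserves every fixed-point set of elements of $V$. Such fixed-point sets separate points, since for distinct $x,y$ we can pick $v\in V_U$ with $x\in U\not\ni y$ and $v(x)\neq x$. So $h^{-1}h'$ is the identity. Uniqueness is not needed for the statement, however.

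The main obstacle is making sure the version of Rubin's theorem we cite has hypotheses matching what we verify. Some formulations require the space to be regular, or the groups to act faithfully; $V$ acts faithfully by definition, as a group of homeomorphisms. Other formulations phrase the hypothesis via "regular open sets" and require the rigid stabilizer of every nonempty open set to be nontrivial. That weaker condition also follows immediately here, since $V_U\cong V$ is nontrivial for every nonempty cylinder $U$. If one prefers to avoid Rubin's general machinery, the alternative is to cite \cite{BleakCameronMaisselNavasOlukoya2024}, where $\Aut(V)$ is identified explicitly with the normalizer of $V$ in $\Homeo(\Cantor)$. From that identification the statement is immediate.
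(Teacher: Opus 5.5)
Your proposal is correct and takes the same route as the paper. The paper gives no separate argument; it simply obtains the statement as the special case of Rubin's reconstruction theorem for the locally dense action of $V$ on the locally compact Hausdorff space $\Cantor$ without isolated points. Your check of local density via Lemma~\ref{lem:local-density} (the $V_U$-orbit closure of $x$ contains the cylinder $U\ni x$) is exactly the hypothesis verification the paper leaves implicit, and your uniqueness argument is a correct extra that the statement does not need.
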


Rubin's theorem also applies to Thompson group $F$ \cite{Brin1996}. In this paper, we define Thompson group $F$ as a subgroup of $V$, hence a subgroup of $Homeo(\mathfrak{C})$. Following Burillo–Cleary \cite{BurilloCleary2013}, we have the following lemma for $F$.

\begin{lem}\label{lem:bf-spatial-f}
For every $\alpha\in\Aut(F)$, there is a homeomorphism
$h\in\Homeo(\Cantor)$, either order-preserving or order-reversing,
such that
\[
 \alpha(f)=hfh^{-1}\qquad(f\in F).
\]
\end{lem}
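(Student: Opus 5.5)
The plan is to deduce the lemma from Rubin's reconstruction theorem (the version for $F$ in \cite{Rubin1989,Brin1996}), applied to $F$ acting on a suitable space, and then transport the resulting homeomorphism back to $\Cantor$. The difficulty is that $F\le V$ does not act on $\Cantor$ in a locally dense way near the endpoints $0^\infty$ and $1^\infty$, which are fixed by all of $F$. So I would first pass to the open interval model. Let $D\subset\Cantor$ be the set of dyadic eventually-constant words. The quotient map $q:\Cantor\to[0,1]$, given by binary expansion, identifies $u01^\infty$ with $u10^\infty$. It intertwines the action of $F$ on $\Cantor$ with the classical piecewise-linear action on $[0,1]$, and $F$ acts on $(0,1)$ by a locally dense group of homeomorphisms. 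Rubin's theorem, or directly Brin's result \cite{Brin1996}, then gives, for $\alpha\in\Aut(F)$, a homeomorphism $\bar h$ of $(0,1)$ with $\alpha(f)=\bar hf\bar h^{-1}$. Such an $\bar h$ is either increasing or decreasing, and it extends to $[0,1]$.

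The second step is to show that $\bar h$ preserves the dyadic rationals $\mathbb Z[1/2]\cap(0,1)$. These are exactly the breakpoints of elements of $F$. Alternatively, they are the points $x$ whose stabilizer's germ group at $x$ is $\mathbb Z^2$ (left and right slopes independent), rather than $\mathbb Z$ as at non-dyadic points. Conjugation by $\bar h$ carries stabilizers to stabilizers and germs to germs, so it preserves this algebraic distinction. Hence $\bar h$ maps dyadic points to dyadic points and non-dyadic points to non-dyadic points.

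The third step is the lift to $\Cantor$. Off $q^{-1}(\text{dyadics})$ the map $q$ is a bijection, so I define $h=q^{-1}\bar hq$ there. At a dyadic point $x$ there are two preimages, a left one $u01^\infty$ and a right one $u10^\infty$. If $\bar h$ is increasing, send the left preimage of $x$ to the left preimage of $\bar h(x)$, and likewise for the right preimages. If $\bar h$ is decreasing, swap left with right. The endpoints are handled by continuity. Continuity of $h$ follows because cylinders correspond to closed dyadic intervals $[a,b]$, with the convention on which preimages are included. The image of such an interval under $\bar h$ is an interval whose endpoints are dyadic, so it is again a finite union of cylinders, which is clopen. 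The same argument applies to $h^{-1}$. By construction $h$ preserves or reverses the lexicographic order, and $hfh^{-1}=\alpha(f)$ holds on the dense set $\Cantor\setminus q^{-1}(\text{dyadics})$, hence everywhere by continuity.

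I expect the main obstacle to be the dyadic-preservation step. If citing \cite{BurilloCleary2013} is allowed, it is immediate: they describe $\Aut(F)$ as conjugation by piecewise-linear-like homeomorphisms of $[0,1]$ that preserve the dyadics, possibly composed with the flip $x\mapsto1-x$. I would cite that description and keep the germ argument as the self-contained justification.
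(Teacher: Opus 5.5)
Your proposal is correct and follows the same architecture as the paper's proof. You realize $\alpha$ by a monotone homeomorphism of the interval that preserves the interior dyadics, lift it through the binary-expansion map $\Cantor\to[0,1]$ by matching the two-point dyadic fibers in the same or reversed order, and conclude by density of the singleton fibers. The paper gets dyadic preservation from Brin's theorem that the conjugator lies in $\mathrm{PL}_2(\mathbb R)$ after the Burillo--Cleary change of coordinates, which is your fallback. Your germ-of-stabilizer argument instead obtains it from Rubin's theorem alone. One small correction: at irrational points the germ group of the stabilizer is trivial, not $\mathbb Z$; it is $\mathbb Z$ only at non-dyadic rationals. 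This is harmless, since you only use that it is not $\mathbb Z^2$.
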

\begin{proof}
Put $D=\mathbb Z[1/2]=\{a/2^m:a\in\mathbb Z,\ m\ge0\}$.
Following Burillo--Cleary \cite{BurilloCleary2013}, write $\mathrm{PL}_2(\mathbb R)$ for the
group of dyadic piecewise linear homeomorphisms of $\mathbb R$:
the breakpoints form a locally finite subset of $D$, the slopes
belong to $\{\pm2^k:k\in\mathbb Z\}$, and $u(D)=D$ for every
$u\in\mathrm{PL}_2(\mathbb R)$. Both orientations are allowed.
Use the standard increasing change of coordinates
$\rho:(0,1)\to\mathbb R$ from
\cite[Section~1]{BurilloCleary2013}.
Explicitly, prescribe
\[
 \rho(2^{-m})=1-m,\qquad
 \rho(1-2^{-m})=m-1\qquad(m\ge1),
\]
and interpolate affinely between consecutive prescribed points.
Then $\rho(D\cap(0,1))=D$. In this model, Brin's theorem
\cite[Theorem~1(i),(ii)]{Brin1996}, restated in
\cite[Theorem~1.1]{BurilloCleary2013}, realizes $\alpha$
by conjugation with some $u\in\mathrm{PL}_2(\mathbb R)$.
Consequently $h_I=\rho^{-1}u\rho$ realizes $\alpha$ in the interval
model and preserves the interior dyadic points. Extend $h_I$ to
$[0,1]$ by its endpoint limits.

To obtain its Cantor realization, define
\[
 \theta:\Cantor\to[0,1],\qquad
 \theta(x)=\sum_{j\ge1}x_j2^{-j}
 \quad\bigl(x=(x_j)_{j\ge1}\bigr).
\]
The fibers of $\theta$ are singletons except over interior dyadic
points, where they are adjacent pairs in lexicographic order.
Since $h_I$ preserves dyadics, lift it to $h:\Cantor\to\Cantor$
by matching singleton fibers and matching each adjacent pair in the
same order if $u$ is increasing, or in reverse order if $u$ is
decreasing. This is an order isomorphism or order anti-isomorphism,
so it is a homeomorphism: the order topology on $\Cantor$ agrees
with its product topology. By construction $\theta h=h_I\theta$.
The interval and Cantor actions of $F$ are intertwined by $\theta$;
hence $\alpha(f)$ and $hfh^{-1}$ agree on the dense set of singleton
fibers and therefore everywhere, for every $f\in F$.
\end{proof}

\subsection{Braids, the groups \texorpdfstring{$BV$ and $BF$}{BV and BF}, and their common kernel}
\label{subsec:braid-model}

\begin{defn}
For $n\ge2$, the braid group $B_n$ has generators
$\sigma_1,\ldots,\sigma_{n-1}$ and relations
\[
 \sigma_i\sigma_j=\sigma_j\sigma_i\quad(|i-j|\ge2),\qquad
 \sigma_i\sigma_{i+1}\sigma_i=\sigma_{i+1}\sigma_i\sigma_{i+1}.
\]
Geometrically, $\sigma_i$ is the positive half twist exchanging the
$i^{th}$ and $(i+1)^{st}$ strands, numbered from left to right. The group
$\mathfrak S_n$ is the group of permutations of $\{1,\ldots,n\}$.
The endpoint permutation homomorphism $B_n\to\mathfrak S_n$ sends
$\sigma_i$ to the transposition $(i\ i+1)$. Its kernel
$\PB_n=\ker(B_n\to\mathfrak S_n)$ is the \emph{pure braid group}:
each strand ends at the same labeled position at which it starts.
We set $B_1=\PB_1=1$.
\end{defn}

The standard pure generators may be chosen as
\[
 A_{ij}=\sigma_{j-1}\cdots\sigma_{i+1}\sigma_i^2
              \sigma_{i+1}^{-1}\cdots\sigma_{j-1}^{-1}
 \qquad(1\le i<j\le n).
\]
For $i<j$, delete every strand except the $i^{th}$ and $j^{th}$. This gives
a homomorphism $\PB_n\to\PB_2=\langle\sigma_1^2\rangle\cong\mathbb Z$.
The resulting exponent is the \emph{pairwise linking number}
$\ell_{ij}(p)\in\mathbb Z$ of a pure braid $p$. The normalization on generators gives
$\ell_{ij}(A_{rs})=1$ if $(i,j)=(r,s)$ and $0$ otherwise. Write $t_{ij}=A_{ij}[\PB_n,\PB_n]$ for the abelianization class of the
standard generator, and $[p]=p[\PB_n,\PB_n]$ for the class of $p$.
The standard abelianization calculation gives
\begin{equation}\label{eq:pb-abelianization}
 (\PB_n)_{\mathrm{ab}}
   =\bigoplus_{1\le i<j\le n}\mathbb Zt_{ij},
 \qquad [p]=\sum_{i<j}\ell_{ij}(p)t_{ij}.
\end{equation}
See \cite{CohenPrassidis2006} for more information on the generators and pairwise linking numbers of pure braid groups.

\begin{defn}[The braided Thompson groups $BV$ and $BF$]
\label{def:bv}\label{def:bf-model}
A \emph{braided tree-pair diagram} is a triple $(T_-,b,T_+)$, where $T_-$
and $T_+$ are finite rooted binary trees with the same number $n$ of
leaves and $b\in B_n$ joins their leaves. An expansion adds a caret at a
leaf, replaces its braid strand by two parallel strands, and adds the
matching caret at the other end of that strand. Diagrams are identified
under expansions and their inverses. Their equivalence classes, denoted $[T_-,b,T_+]$, form the braided Thompson group $ BV$.
Multiplication is given by stacking diagrams after refining the middle trees
to agree; the inverse reverses the diagram and inverts the braid.

The \emph{braided Thompson group $BF$} is the subgroup of $BV$ whose
diagrams have pure braids. Cloning and its inverse preserve purity, so
this condition is independent of the representative. Stacking pure
braids and inverting them preserve purity, so these classes form a
subgroup. Forgetting the pure braid defines a surjective homomorphism
$\pi_F:BF\to F$.
\end{defn}

This is the construction of the braided Thompson group $BV$
\cite{Brin2007,Dehornoy2006}; the
tree-pair model and pure variants are developed in
\cite{BradyBurilloClearyStein2008}. Replacing a braid by its endpoint
permutation defines a surjective homomorphism
\begin{equation}\label{eq:bv-extension}
 \pi: BV\longrightarrow V.
\end{equation}
Let $P=\ker\pi$. An element of $P$ has a representative $[T,p,T]$ with
$p\in\PB_n$. Conversely, all such representatives belong to $P$.
For a tree $T$ with $n$ leaves, write
$\PB_T=\{[T,p,T]:p\in\PB_n\}$ for this copy of $\PB_n$ in $P$.
Write $T\leq U$ if $U$ is obtained from $T$ by adding carets.
For such a refinement, cloning defines the inclusion
$i_{T,U}:\PB_T\to\PB_U$. So this forms a directed system.
Cloning is injective, since deleting one of the two clones recovers the
original braid. Consequently
\begin{equation}\label{eq:p-colimit}
 P=\varinjlim_T\PB_T=\bigcup_T\PB_T,
\end{equation}
with respect to common tree refinements; $\varinjlim$ denotes this direct
limit. Concretely, every element belongs to a finite stage $\PB_T$, and
every finite collection of elements belongs to a common refined stage.
This description is also given in \cite[Section~1.2]{Zaremsky2018}.

The kernel of $\pi_F$ is the same group $P=\ker\pi$: a diagram
projects to the identity of $F$ precisely when it can be represented as
$[T,p,T]$ with $p$ pure. Thus the two braided groups have the common
kernel in the exact sequences
\[
 1\longrightarrow P\longrightarrow BV\xrightarrow{\pi}V
 \longrightarrow1
\]
and
\begin{equation}\label{eq:bf-extension}
 1\longrightarrow P\longrightarrow BF\xrightarrow{\pi_F}F
 \longrightarrow1,\qquad BF=P\rtimes F.
\end{equation}
For the second sequence, send an ordered tree pair to its diagram with
trivial braid. This gives a homomorphism $s:F\to BF$ with
$\pi_Fs=\operatorname{id}_F$, and hence the displayed splitting.
The action is $f\cdot p=s(f)ps(f)^{-1}$. With the pair $(p,f)$
representing $ps(f)$, multiplication is
\[
 (p,f)(q,g)=\bigl(p\,(f\cdot q),fg\bigr).
\]

This is the splitting in \cite[Section~1.2]{Zaremsky2018}. Both groups therefore use exactly the same finite stages $\PB_T$ and
cloning maps. Unlike cloning on braid
groups with unrestricted endpoint permutations, cloning a specified pure
strand is a group homomorphism. Moreover, it has the following retraction which will be used for the
residual-nilpotence argument.

\begin{lem}\label{lem:split-cloning}
If $T\leq U$, the cloning inclusion $i_{T,U}:\PB_T\to\PB_U$ has a
homomorphic left inverse $d_{U,T}:\PB_U\to\PB_T$, meaning
$d_{U,T}\circ i_{T,U}=\operatorname{id}_{\PB_T}$. The map $d_{U,T}$ is
obtained by retaining exactly one descendant strand of each leaf of $T$
and deleting all other strands.
\end{lem}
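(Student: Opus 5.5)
The plan is to define $d_{U,T}$ directly by deleting strands and then check the two required properties. The key preliminary observation is that, for pure braids, deleting any set of strands is a well-defined homomorphism. If $S\subseteq\{1,\ldots,m\}$ is a set of strand positions with $|S|=k$, the forgetful map $\PB_m\to\PB_k$ removes the strands not in $S$ and renumbers the remaining ones in order. This is the standard Fadell--Neuwirth forgetful map: geometrically, removing strands from a geometric braid still yields a braid, and isotopies restrict. Because the braid is pure, each retained strand returns to its own endpoint, so deletion is compatible with stacking and gives a group homomorphism. It is also a composition of single-strand deletions, and the pairwise linking maps $\PB_n\to\PB_2$ used in \eqref{eq:pb-abelianization} are the special case $k=2$.

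To define the map, let $T$ have $n$ leaves and $U$ have $m$ leaves. Each leaf of $U$ descends from a unique leaf of $T$, since $U$ is obtained from $T$ by adding carets. For each leaf $\lambda$ of $T$, choose one leaf $c(\lambda)$ of $U$ below $\lambda$; for definiteness, take the leftmost one. Let $S=\{c(\lambda)\}$. The leaf orders of $T$ and $U$ are compatible, so the retained strands keep the order of the leaves of $T$. We then set $d_{U,T}$ to be the forgetful homomorphism $\PB_U\cong\PB_m\to\PB_n\cong\PB_T$ determined by $S$.

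For the left-inverse identity, first take the case where $U$ comes from $T$ by adding one caret at leaf $\lambda$. Then $i_{T,U}(p)$ replaces strand $\lambda$ by two parallel copies, and deleting one of these copies recovers $p$; this is exactly the injectivity argument already noted after \eqref{eq:p-colimit}. In general, $i_{T,U}(p)$ replaces each strand $\lambda$ of $p$ by a ribbon of parallel strands indexed by the descendants of $\lambda$. Keeping exactly one strand from each ribbon returns a braid isotopic to $p$, because the chosen strand runs along the core of its ribbon. Hence $d_{U,T}\circ i_{T,U}=\operatorname{id}_{\PB_T}$.

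The main point requiring care is the homomorphism property of strand deletion. I would justify it through the configuration-space description $\PB_m=\pi_1\bigl(\mathrm{Conf}_m(\mathbb C)\bigr)$, under which deletion is induced by the coordinate projection $\mathrm{Conf}_m(\mathbb C)\to\mathrm{Conf}_k(\mathbb C)$. A map of spaces induces a map on fundamental groups, so deletion is a homomorphism. The ribbon argument is isotopy-invariant for the same reason: cloning is realized by replacing each point of a configuration by a tiny cluster, and the projection sends this cluster configuration back to a path homotopic to the original one.
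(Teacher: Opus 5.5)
Your proposal is correct and follows essentially the same argument as the paper. Strand deletion is a homomorphism on pure braids, and keeping one strand from each cloned parallel bundle recovers the original braid; your configuration-space justification of the homomorphism property (projection $\mathrm{Conf}_m(\mathbb C)\to\mathrm{Conf}_k(\mathbb C)$ inducing a map on $\pi_1$) supplies detail that the paper states in one line.
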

\begin{proof}
Pure braids preserve each strand label, so deletion of a fixed set of strands
commutes with multiplication. The retained strand in each cloned bundle
follows the original strand, and deleting its parallel companions recovers
the original pure braid. This proves the left-inverse identity.
\end{proof}

\subsection{Normal subgroups and a common epimorphism reduction}
\label{subsec:braided-epi-reduction}
Let us recall the normal subgroup structure for $BV$ and $BF$ as in \cite[Corollary~2.8]{Zaremsky2018} and
\cite[Theorem~2.1]{Zaremsky2018}.
\begin{thm}
\label{thm:braided-normal-subgroups}

\begin{enumerate}[label=(\roman*)]
\item Every proper normal subgroup of $BV$ is contained in $P$.
Consequently, $P$ is the unique maximal proper normal subgroup of $BV$.
\item For every normal subgroup $N\lhd BF$, either $N\subseteq P$
or $[BF,BF]\subseteq N$.
\end{enumerate}
\end{thm}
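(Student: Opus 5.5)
The plan is the standard commutator argument for groups acting on the Cantor set with rich local subgroups. We push a normal subgroup not contained in $P$ down to $V$ or $F$, use that to find an element that moves a clopen set off itself, and then pull back commutators of local braided subgroups.

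For a clopen $U\subseteq\Cantor$, let $BV_U\le BV$ be the subgroup of elements with a representative $[T,b,T']$ in which every leaf outside $U$ is a common leaf of $T$ and $T'$ carrying a trivial, unbraided strand. Let $BF_U=BV_U\cap BF$.

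\textbf{Step 1: reduce to the quotient.} For (i), let $N\lhd BV$ with $N\not\subseteq P$. Then $\pi(N)$ is a nontrivial normal subgroup of the simple group $V$, so $\pi(N)=V$ and $NP=BV$. For (ii), let $N\lhd BF$ with $N\not\subseteq P$. Then $\pi_F(N)$ is a nontrivial normal subgroup of $F$, so by \cite[Theorem~4.3]{CannonFloydParry1996} it contains $[F,F]$. In both cases choose $n\in N$ and a nonempty clopen $U$ such that $\pi(n)(U)\cap U=\varnothing$. For $BF$, $U$ is taken to be a small cylinder in the interior, where $[F,F]$ acts with elements moving it off itself.

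\textbf{Step 2: the commutator trick.} Let $a,b\in BV_U$, respectively $BF_U$. The element $nan^{-1}$ is supported in $\pi(n)(U)$, which is disjoint from $U$, so it commutes with $b$. Here I need the observation that elements with representatives supported on disjoint leaf sets commute. A direct expansion then gives
\[
 [[n,a],b]=[a,b]\in N .
\]
Hence $[BV_U,BV_U]\subseteq N$. Conjugating by $BV$, and using Lemma~\ref{lem:partition-matching} to move $U$ onto any nonempty proper clopen set, gives $[BV_W,BV_W]\subseteq N$ for every such $W$. For $BF$, conjugating by $s(F)$ gives the same conclusion for every cylinder $W$ lying in the interior.

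\textbf{Step 3: generation.} This is the step I expect to be the main obstacle. For $BV$ I would show that the subgroups $[BV_W,BV_W]$ together generate $BV$. First, each finite stage $\PB_T$ is generated by the standard generators $A_{ij}$, and after refining, any pair of strands lies inside some proper clopen $W$. Second, a product such as $A_{12}A_{34}^{-1}$ lies in the commutator subgroup of a local copy of $B_4$, since the abelianization $B_n\to\mathbb Z$ only records exponent sums. Third, cloning allows an $A_{ij}$ to be rewritten as a product of pure braids on cloned strands whose total exponent sum cancels. Together these should show that $P$ lies in the subgroup generated by the $[BV_W,BV_W]$. Since $NP=BV$ from Step 1, this gives $N=BV$.

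For (ii) the same local argument applies. Pure braids on interior strands with vanishing total linking contribution lie in $N$, and $[F,F]\subseteq\pi_F(N)$. I would then check against the known abelianization $BF_{\mathrm{ab}}\cong\mathbb Z^4$ \cite[Lemma~1.7]{Zaremsky2018} that what is collected exhausts $[BF,BF]$. The remaining bookkeeping is the linking numbers of the strands at the two endpoint leaves, which are exactly what $BF_{\mathrm{ab}}$ detects beyond $F_{\mathrm{ab}}$. This linking-number accounting is the delicate part, and it is where I would follow Zaremsky's proof most closely.
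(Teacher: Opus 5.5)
The paper does not prove this theorem. Both parts are quoted from \cite[Corollary~2.8 and Theorem~2.1]{Zaremsky2018}, so your sketch has to stand on its own. Part (i) can be completed along your lines, but two steps fail as written.

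In Step~2, the claim that $nan^{-1}$ is supported in $\pi(n)(U)$ is false in braided groups. Supports are discs, not leaf sets, and the braid of $n$ can wind the image disc around the strands of $U$. For the balanced four-leaf tree, $\sigma_2^{2}\sigma_3\sigma_2^{-2}$ is the conjugate of $\sigma_1$ by $\sigma_2^{3}\sigma_1\sigma_3\sigma_2$, which exchanges the leaf pairs of $[0]$ and $[1]$. Its image in $V$ is supported in $[1]$, yet it does not commute with $\sigma_1\in BV_{[0]}$. To repair this, take $U$ to be a cylinder inside a single leaf of the source tree of a representative of $n$. Then $n$ carries the whole bundle over $U$ along one strand, and $nBV_Un^{-1}=BV_{\pi(n)(U)}$. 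Likewise, move $U$ to another cylinder $W$ only by elements having $U$ and $W$ as matched leaves. (With the paper's convention your identity actually reads $[[n,a],b]=[a^{-1},b]$, which is harmless.)

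In Step~3, cloning doubles crossings rather than cancelling them. What works is the following. For distinct strands $i,j,k$ of a local stage $B_T\subseteq BV_W$, one has $A_{ij}A_{kj}^{-1}\in[B_T,B_T]\subseteq N$. After cloning strand $k$ to get $T'$, the braid $A_{kj}$ has exponent sum $4$ while $A_{ij}$ still has exponent sum $2$. Hence $A_{kj}A_{ij}^{-2}\in[B_{T'},B_{T'}]\subseteq N$. So $A_{ij}\equiv A_{ij}^2$ modulo $N$, giving $A_{ij}\in N$. Normality then gives $P\subseteq N$.

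Part (ii) has a genuine gap. First, your intermediate claim is false. Take $N=[BF,BF]$, which is normal and not contained in $P$. The braid $A_{13}A_{12}^{-1}$ on three consecutive interior strands has total linking $0$ but is not in $N$. The reason is that the adjacent-pair sum $\sum_r f_{r,r+1}$ of a linking array is invariant under cloning and under $F$. It therefore defines a homomorphism $BF\to\mathbb Z$ that kills $s(F)$ and takes the value $-1$ on $A_{13}A_{12}^{-1}$. Total linking is not even cloning-invariant. What $[BF_W,BF_W]$ actually provides is the kernel of the local adjacent-sum and local first--last linking invariants.

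Second, and more seriously, the real content of (ii) is deferred to Zaremsky. That content concerns pure braids involving the strands at the leaves containing $0^\infty$ and $1^\infty$, which no element of $BF$ can move into an interior $BF_W$. Knowing $BF_{\mathrm{ab}}\cong\mathbb Z^4$ only identifies $[BF,BF]$; it does not place it inside $N$.

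One way to close the gap is as follows.
\begin{enumerate}[label=(\alph*)]
\item Show first that $s([F,F])\subseteq N$, by conjugating any $f\in[F,F]$ into $[F_W,F_W]$ for an interior cylinder $W$.
\item Modulo $N$, each pure braid may then be replaced by any $[F,F]$-translate, independently for the two entries of a commutator. So two generators of a stage $\PB_T$ commute modulo $N$ whenever their strand blocks can be pushed apart. This covers every pair except two generators at the same end, and $A_{1n}$.
\item For two generators at the left end, the full twist $A_{1j}A_{1k}A_{jk}$ commutes with each factor, which gives $[A_{1j},A_{1k}]=[A_{jk}^{-1},A_{1j}^{-1}]$; the right end is symmetric.
\item For $A_{1n}$, use $A_{1n}=\Delta_{n-1}^{-2}\Delta_n^{2}(A_{2n}\cdots A_{n-1,n})^{-1}$ with $\Delta_n^2$ central. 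Together with (b) and (c) this yields $[P,P]\subseteq N$.
\item Finally, $F$ and $[F,F]$ have the same coinvariants on $M$. Hence $[p,s(f)]\in N$ for all $p\in P$ and $f\in F$, and $BF/N$ is abelian.
\end{enumerate}
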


 The two normal-subgroup results have different conclusions, but they
give the same reduction for surjective endomorphisms.

\begin{lem}
\label{lem:braided-epi-reduction}
Let $(E,Q,\pi_E)$ be either $(BV,V,\pi)$ or $(BF,F,\pi_F)$.
For every surjective endomorphism $\Phi:E\twoheadrightarrow E$,
\[
 \Phi^{-1}(P)=P,\qquad \Phi(P)=P,\qquad \ker\Phi\subseteq P.
\]
It induces an automorphism $\alpha:Q\to Q$ characterized by
$\pi_E\Phi=\alpha\pi_E$, and its restriction
$\beta=\Phi|_P:P\twoheadrightarrow P$ is surjective. Moreover,
$\ker\Phi=\ker\beta$, so $\Phi$ is injective if and only if
$\beta$ is injective.
\end{lem}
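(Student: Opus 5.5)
The plan is to study the normal subgroup $N=\Phi^{-1}(P)\lhd E$ and first show $N\subseteq P$ using Theorem~\ref{thm:braided-normal-subgroups}. After that, Hopficity of the quotient $Q$ forces $N=P$, and all remaining assertions follow formally.

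\emph{Step 1: $N\subseteq P$.} I would avoid treating the two cases of Theorem~\ref{thm:braided-normal-subgroups} separately by a single argument, since case (ii) is weaker than case (i). Because $\Phi$ is surjective, it satisfies
\[
 \Phi([E,E])=[\Phi(E),\Phi(E)]=[E,E].
\]
Suppose $N\not\subseteq P$. In case (i), $N$ cannot be proper, so $N=E$. In case (ii), $[E,E]\subseteq N$. In either case $[E,E]\subseteq N$, and applying $\Phi$ gives $[E,E]=\Phi([E,E])\subseteq\Phi(N)\subseteq P$. Then $Q\cong E/P$ would be abelian. This is impossible, because $V$ is simple and nonabelian and $F$ is nonabelian. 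Hence $N\subseteq P$, and therefore $\ker\Phi\subseteq\Phi^{-1}(1)\subseteq N\subseteq P$.

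\emph{Step 2: $N=P$.} The composite $\pi_E\Phi:E\to Q$ is surjective with kernel exactly $N$, so $E/N\cong Q$. Since $N\subseteq P$, the natural map $E/N\to E/P\cong Q$ is a surjection with kernel $P/N$. Composing, we obtain a surjective endomorphism of $Q$ whose kernel is isomorphic to $P/N$. The group $V$ is Hopfian because it is simple, and $F$ is Hopfian by Lemma~\ref{lem:f-hopfian}. Therefore $P/N$ is trivial and $\Phi^{-1}(P)=P$. Surjectivity of $\Phi$ then gives $\Phi(P)=\Phi(\Phi^{-1}(P))=P$.

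\emph{Step 3: the induced maps.} Define $\alpha(\pi_E(x))=\pi_E(\Phi(x))$. This is well defined because $\Phi(P)\subseteq P$, and it is surjective because $\pi_E\Phi$ is. Its kernel is $\pi_E(\Phi^{-1}(P))=\pi_E(P)=1$, so $\alpha$ is an automorphism, and it is unique because $\pi_E$ is onto. The restriction $\beta=\Phi|_P$ maps onto $P$ by Step 2. Finally, $\ker\Phi\subseteq P$ gives $\ker\Phi=\ker\Phi\cap P=\ker\beta$, so $\Phi$ is injective exactly when $\beta$ is.

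The only delicate point is Step 1 for $BF$, where the normal subgroup theorem alone does not place $N$ inside $P$. The commutator trick above, $\Phi([E,E])=[E,E]$, resolves it, so I expect no serious obstacle.
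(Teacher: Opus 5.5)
Your proof is correct and follows essentially the same route as the paper: identify $N=\Phi^{-1}(P)=\ker(\pi_E\Phi)$, show $N\subseteq P$ via Theorem~\ref{thm:braided-normal-subgroups}, then use Hopficity of $Q$ to force $N=P$ and read off $\alpha$, $\beta$ and $\ker\Phi=\ker\beta$. The only difference is in Step~1: the paper rules out $[E,E]\subseteq N$ directly because $E/N\cong Q$ is nonabelian, an isomorphism you already note in Step~2, so your commutator argument $\Phi([E,E])=[E,E]$ is valid but not needed.
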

\begin{proof}
Let $q=\pi_E \circ \Phi:E\twoheadrightarrow Q$ and $N=\ker q$.
Then $E/N\cong Q$. If $E=BV$, the subgroup $N$ is proper, so
Theorem~\ref{thm:braided-normal-subgroups}(i) gives $N\subseteq P$.
If $E=BF$ and $N\not\subseteq P$, part (ii) would give
$[BF,BF]\subseteq N$, making $BF/N\cong F$ abelian. This contradicts
the nonabelianity of $F$. Thus $N\subseteq P$ in both cases.

Because $N\subseteq\ker\pi_E$, the formula
\[
 u:Q\longrightarrow Q,\qquad u(q(g))=\pi_E(g)\quad(g\in E)
\]
is well-defined. Indeed, if $q(g)=q(g')$, then
$g^{-1}g'\in N\subseteq P$, so $\pi_E(g)=\pi_E(g')$.
The formula defines a homomorphism, and it is surjective because
$\pi_E$ is surjective. The group $Q$ is Hopfian, by the simplicity of $V$ and Lemma~\ref{lem:f-hopfian}
for $F$. Hence $u$ is an automorphism. The identity $\pi_E=uq$ yields
\[
 P=\ker\pi_E=\ker q=\Phi^{-1}(P).
\]
In particular $\Phi(P)\subseteq P$. For $p\in P$, choose $g\in E$
with $\Phi(g)=p$. The preimage equality puts $g$ in $P$, proving
$\Phi(P)=P$ and surjectivity of $\beta$.

Taking $\alpha=u^{-1}$ gives $\pi_E\Phi=\alpha\pi_E$.
Finally $\ker\Phi\subseteq\Phi^{-1}(P)=P$, so
$\ker\Phi=\ker\beta$, as claimed.
\end{proof}

\section{Residual nilpotence and graded Lie rings}
\label{sec:graded-preliminaries}

 In this section, we collect the general tools used for $BV$ and $BF$ in
Sections~\ref{sec:bv-hopficity} and~\ref{sec:bf-hopficity}. Following Lemma~\ref{lem:braided-epi-reduction}, to get the injectivity of $\Phi$, it suffices to get the injectivity of $\beta$. We
prove that the common pure braid kernel
$P$ is residually nilpotent. For residually nilpotent group, we can use the graded Lie ring method to get the injectivity of the group homomorphism starting from a residually nilpotent group. We establish the graded injectivity criterion, using the idea in \cite{CohenPrassidis2006}
and \cite{CohenWu2008}.

\begin{defn}
Let $H$ be a group. The
\emph{lower central series} of $H$ is the sequence of characteristic subgroups, which are in particular normal, given by
\[
 \gamma_1(H)=H,\qquad
 \gamma_{k+1}(H)=[H,\gamma_k(H)]\quad(k\ge1).
\]
Thus $\gamma_2(H)=[H,H]$ is the commutator subgroup. The sequence is
descending: $\gamma_{k+1}(H)\subseteq\gamma_k(H)$ for all $k$.

The group $H$ is \emph{nilpotent of class at most $c$} if
$\gamma_{c+1}(H)=\{1\}$. It is \emph{residually nilpotent} if for every
$x\in H\setminus\{1\}$ there exist a nilpotent group $N$ and a
homomorphism $q:H\to N$ such that $q(x)\neq1$.
\end{defn}

The lower central series can be defined for all groups, and they are preserved by group homomorphism. One can also use the lower central series to determine the residual nilpotence of groups.

\begin{lem}\label{lem:lcs-functorial}
If $f:H\to K$ is a group homomorphism, then
\[
 f(\gamma_k(H))\subseteq\gamma_k(K)\qquad(k\ge1).
\]
Consequently, $H$ is residually nilpotent if and only if
\begin{equation}\label{eq:residually-nilpotent-intersection}
 \bigcap_{k\ge1}\gamma_k(H)=\{1\}.
\end{equation}
\end{lem}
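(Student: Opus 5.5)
The first assertion is an induction on $k$, using only that a homomorphism takes commutators to commutators. For $k=1$ the claim $f(H)\subseteq K=\gamma_1(K)$ holds trivially. Suppose $f(\gamma_k(H))\subseteq\gamma_k(K)$. The subgroup $\gamma_{k+1}(H)=[H,\gamma_k(H)]$ is generated by the commutators $[h,g]$ with $h\in H$ and $g\in\gamma_k(H)$. Each such generator satisfies
\[
 f([h,g])=[f(h),f(g)]\in[K,\gamma_k(K)]=\gamma_{k+1}(K).
\]
Since $f(\gamma_{k+1}(H))$ is generated by the images of these generators, and $\gamma_{k+1}(K)$ is a subgroup, the inclusion $f(\gamma_{k+1}(H))\subseteq\gamma_{k+1}(K)$ follows. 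This completes the induction.

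For the equivalence, I will prove both directions directly.

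Suppose first that \eqref{eq:residually-nilpotent-intersection} holds, and let $x\neq1$. Then there is some $k$ with $x\notin\gamma_{k+1}(H)$. Take $N=H/\gamma_{k+1}(H)$ and let $q$ be the quotient map, so $q(x)\neq1$. It remains to check that $N$ is nilpotent. By the first part applied to the surjection $q$, we have $q(\gamma_j(H))\subseteq\gamma_j(N)$. Surjectivity also gives the reverse inclusion, by the same induction with generators lifted. Hence $\gamma_{k+1}(N)=q(\gamma_{k+1}(H))=1$, so $N$ is nilpotent of class at most $k$.

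Conversely, suppose $H$ is residually nilpotent, and let $x\in\bigcap_k\gamma_k(H)$. Assume for contradiction that $x\neq1$. Choose a nilpotent group $N$, say with $\gamma_{c+1}(N)=1$, and a homomorphism $q:H\to N$ with $q(x)\neq1$. By the first part,
\[
 q(x)\in q(\gamma_{c+1}(H))\subseteq\gamma_{c+1}(N)=\{1\},
\]
which contradicts $q(x)\neq1$. Hence the intersection is trivial.

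All steps are routine. The only point needing care is the equality $\gamma_j(H/M)=\gamma_j(H)M/M$ for a quotient; the reverse inclusion is obtained by lifting the commutator generators through the surjection.
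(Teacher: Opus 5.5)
Your proof is correct and follows the same route as the paper. Both arguments induct on $k$ using $f([a,b])=[f(a),f(b)]$, detect $x$ in the quotient $H/\gamma_{k+1}(H)$, and prove the converse via $q(\gamma_{c+1}(H))\subseteq\gamma_{c+1}(N)=\{1\}$. You also justify, via $\gamma_j(H/M)=\gamma_j(H)M/M$, that this quotient is nilpotent, a point the paper only asserts.
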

\begin{proof}
For $k=1$, the claimed inclusion is immediate. If it holds for $k$, the
identity $f([a,b])=[f(a),f(b)]$ gives
\[
 f(\gamma_{k+1}(H))
 =f([H,\gamma_k(H)])
 \subseteq[K,\gamma_k(K)]=\gamma_{k+1}(K).
\]
This proves the inclusion by induction.

Suppose the intersection in
\eqref{eq:residually-nilpotent-intersection} is trivial. Given $x\neq1$,
choose $k$ such that $x\notin\gamma_k(H)$. The quotient map
$H\to H/\gamma_k(H)$ detects $x$, and its target is nilpotent of class
at most $k-1$. Conversely, if $x$ lies in every $\gamma_k(H)$ and
$q:H\to N$ is a homomorphism to a nilpotent group of class at most $c$,
then
\[
 q(x)\in q(\gamma_{c+1}(H))
 \subseteq\gamma_{c+1}(N)=\{1\}.
\]
If $H$ is residually nilpotent, this forces $x=1$.
\end{proof}

\begin{lem}\label{lem:commutator-weights}
For all positive integers $i,j$,
\begin{equation}\label{eq:commutator-weights}
 [\gamma_i(H),\gamma_j(H)]\subseteq\gamma_{i+j}(H).
\end{equation}
\end{lem}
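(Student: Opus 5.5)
We prove \eqref{eq:commutator-weights} by induction on $i$, for all $j$ simultaneously, using the three subgroups lemma of P.~Hall. That lemma says: if $A,B,C$ are subgroups of $H$, $M\lhd H$, and both $[[A,B],C]$ and $[[B,C],A]$ lie in $M$, then $[[C,A],B]\subseteq M$. It follows from the Hall--Witt identity. Two further facts will be used throughout. First, every $\gamma_k(H)$ is normal. Second, $[A,B]=[B,A]$ for any subgroups $A,B$, because $[a,b]^{-1}=[b,a]$.

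For the base case $i=1$, the claim is $[H,\gamma_j(H)]\subseteq\gamma_{j+1}(H)$. This holds with equality, by the definition of the lower central series.

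For the inductive step, assume $[\gamma_i(H),\gamma_j(H)]\subseteq\gamma_{i+j}(H)$ for every $j\ge1$. We must show $[\gamma_{i+1}(H),\gamma_j(H)]\subseteq\gamma_{i+j+1}(H)$. By definition $\gamma_{i+1}(H)=[H,\gamma_i(H)]$. We apply the three subgroups lemma with
\[
 A=H,\qquad B=\gamma_i(H),\qquad C=\gamma_j(H),\qquad M=\gamma_{i+j+1}(H).
\]
The lemma requires two containments.
\begin{enumerate}[label=(\alph*)]
\item $[[B,C],A]=[[\gamma_i(H),\gamma_j(H)],H]$. By the induction hypothesis and symmetry of commutator subgroups, this lies in $[\gamma_{i+j}(H),H]=[H,\gamma_{i+j}(H)]=\gamma_{i+j+1}(H)$.
\item $[[C,A],B]=[[\gamma_j(H),H],\gamma_i(H)]$. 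By the base case, $[\gamma_j(H),H]=\gamma_{j+1}(H)$. So this equals $[\gamma_{j+1}(H),\gamma_i(H)]=[\gamma_i(H),\gamma_{j+1}(H)]$, which the induction hypothesis (applied with $j+1$ in place of $j$) places in $\gamma_{i+j+1}(H)$.
\end{enumerate}
The lemma then gives $[[A,B],C]\subseteq M$, that is, $[\gamma_{i+1}(H),\gamma_j(H)]\subseteq\gamma_{i+j+1}(H)$. This closes the induction. The induction hypothesis is needed for all $j$ at once precisely because step (b) shifts $j$ to $j+1$.

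The main obstacle is justifying the three subgroups lemma in the subgroup (not merely element) form, since the paper has not stated it. The route is the Hall--Witt identity. With the convention $[a,b]=aba^{-1}b^{-1}$, it reads, in one suitable form,
\[
 [[a,b^{-1}],c]^{b}\,[[b,c^{-1}],a]^{c}\,[[c,a^{-1}],b]^{a}=1,
\]
where $x^y$ denotes a conjugate whose side is fixed by the convention. From it, every generator $[[c,a],b]$ of $[[C,A],B]$ lies in $M$ modulo conjugates of elements of $M$, and $M$ is normal. One must still pass from generators $[x,b]$ with $x$ a single commutator to arbitrary products $x\in[C,A]$. For this, use the identity $[xy,b]=x[y,b]x^{-1}[x,b]$, together with normality of $M$ and the symmetry of commutator subgroups. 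I would cite the lemma as standard (e.g.\ from Magnus--Karrass--Solitar) rather than reprove it.
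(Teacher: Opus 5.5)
Your proof is correct and follows the paper's argument: induction on $i$ for all $j$ at once, applying the three subgroups lemma with $A=H$, $B=\gamma_i(H)$, $C=\gamma_j(H)$ (the paper writes the step as $i-1\to i$), with the second containment using the hypothesis at $j+1$. One small slip in your side remark: the displayed Hall--Witt identity is the form for the convention $[x,y]=x^{-1}y^{-1}xy$, $x^y=y^{-1}xy$, and it is not an identity for the paper's $[a,b]=aba^{-1}b^{-1}$ with either conjugation side (the paper uses ${}^y[[y^{-1},x],z]\,{}^z[[z^{-1},y],x]\,{}^x[[x^{-1},z],y]=1$ with ${}^gu=gug^{-1}$), but this is harmless, since the subgroup form of the three subgroups lemma that you actually invoke does not depend on the convention.
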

\begin{proof}
Denote ${}^g u=gug^{-1}$ by the conjugation. Then we have the Hall--Witt identity:
\begin{equation}\label{eq:hall-witt}
 {}^y[[y^{-1},x],z]\;
 {}^z[[z^{-1},y],x]\;
 {}^x[[x^{-1},z],y]=1.
\end{equation}

As a consequence of the identity: for subgroups $A,B,C$ and a normal subgroup $D$, the inclusions
$[[B,C],A]\subseteq D$ and $[[C,A],B]\subseteq D$ imply
$[[A,B],C]\subseteq D$.

We now induct on $i$, proving the assertion for all $j$ simultaneously.
For $i=1$, it is the definition of the lower central series. Suppose
$i\ge2$ and the assertion is known with $i-1$ in place of $i$.
Apply the three-subgroup consequence with
\[
 A=H,\quad B=\gamma_{i-1}(H),\quad C=\gamma_j(H),
 \quad D=\gamma_{i+j}(H).
\]
The induction hypothesis gives
\[
 [[B,C],A]\subseteq[\gamma_{i+j-1}(H),H]
 =\gamma_{i+j}(H)
\]
and
\[
 [[C,A],B]=[\gamma_{j+1}(H),\gamma_{i-1}(H)]
 \subseteq\gamma_{i+j}(H).
\]
The conclusion is
$[\gamma_i(H),\gamma_j(H)]\subseteq\gamma_{i+j}(H)$, as required.
\end{proof}

With lower central series, we can get a graded ring with Lie bracket as follows:

\begin{defn}\label{def:graded-cosets}
For $k\ge1$, define the $k^{th}$ \emph{graded piece} by
\[
 \gr_k(H)=\gamma_k(H)/\gamma_{k+1}(H).
\]
If $x\in\gamma_k(H)$, its class in this quotient is the coset
\[
 \overline{x}_k=x\gamma_{k+1}(H)
 =\{xu:u\in\gamma_{k+1}(H)\}.
\]
The canonical quotient homomorphism is
\[
 q_k:\gamma_k(H)\longrightarrow\gr_k(H),
 \qquad q_k(x)=\overline{x}_k.
\]
By Lemma~\ref{lem:commutator-weights},
$[\gamma_k(H),\gamma_k(H)]\subseteq\gamma_{2k}(H)
\subseteq\gamma_{k+1}(H)$, so this quotient is abelian. We therefore
write its operation additively:
\[
 \overline{x}_k+\overline{y}_k=\overline{xy}_k,
 \qquad -\overline{x}_k=\overline{x^{-1}}_k,
 \qquad 0=1\gamma_{k+1}(H).
\]
In particular,
\begin{equation}\label{eq:zero-graded-coset}
 \overline{x}_k=0\quad\Longleftrightarrow\quad
 x\in\gamma_{k+1}(H).
\end{equation}
The notation $\overline{x}_k$ is used only when $x\in\gamma_k(H)$.
\end{defn}

\begin{prop}\label{prop:graded-lie-ring}
The direct sum
\[
 \gr(H)=\bigoplus_{k\ge1}\gr_k(H)
\]
is a graded Lie ring, with bracket on homogeneous elements defined by
\begin{equation}\label{eq:graded-bracket}
 [\overline{x}_i,\overline{y}_j]
 =\overline{[x,y]}_{i+j}.
\end{equation}
Here a \emph{homogeneous} element of degree $k$ means an element of
$\gr_k(H)$. A \emph{Lie ring} is an abelian group with a bilinear
bracket satisfying $[u,u]=0$ and the Jacobi identity
\[
 [[u,v],w]+[[v,w],u]+[[w,u],v]=0.
\]
The bracket is graded in the sense that
$[\gr_i(H),\gr_j(H)]\subseteq\gr_{i+j}(H)$. It satisfies the
ordinary Lie identities, with no degree-dependent sign convention.
\end{prop}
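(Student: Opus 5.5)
The plan is to verify, in order, that the bracket is well defined on cosets, that it is bilinear, that it satisfies $[u,u]=0$, and that it satisfies the Jacobi identity. Each identity is first proved on homogeneous elements and then extended biadditively to $\gr(H)$.

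\emph{Well-definedness and degree.} The bracket lands in the right degree: for $x\in\gamma_i(H)$ and $y\in\gamma_j(H)$, Lemma~\ref{lem:commutator-weights} gives $[x,y]\in\gamma_{i+j}(H)$. To see that the class $\overline{[x,y]}_{i+j}$ depends only on the cosets, replace $x$ by $xu$ with $u\in\gamma_{i+1}(H)$. The commutator identities (with our convention $[a,b]=aba^{-1}b^{-1}$)
\[
 [xu,y]={}^x[u,y]\,[x,y],\qquad [x,yv]=[x,y]\,{}^y[x,v]
\]
show that the change is a conjugate of $[u,y]$. That element lies in $\gamma_{i+j+1}(H)$ by Lemma~\ref{lem:commutator-weights}, and $\gamma_{i+j+1}(H)$ is normal. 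The symmetric argument handles $y\mapsto yv$. We also use that conjugation acts trivially on $\gr_k(H)$, since ${}^g z\,z^{-1}=[g,z]\in\gamma_{k+1}(H)$ for $z\in\gamma_k(H)$.

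\emph{Bilinearity.} The same two expansion identities give $[xx',y]\equiv[x,y][x',y]$ modulo $\gamma_{i+j+1}(H)$, using the triviality of the conjugation action just noted. Thus the bracket is biadditive on homogeneous pieces, and we extend it biadditively to the direct sum.

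\emph{Alternating property.} On a homogeneous element, $[\overline{x},\overline{x}]=\overline{[x,x]}=0$. For a general $u=\sum u_k$ we have $[u,u]=\sum_{k,l}[u_k,u_l]$. The diagonal terms vanish, and the cross terms cancel in pairs because $[y,x]=[x,y]^{-1}$ gives antisymmetry $[u_l,u_k]=-[u_k,u_l]$ on homogeneous elements.

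\emph{Jacobi identity.} This is the main obstacle. By trilinearity it suffices to treat homogeneous $x\in\gamma_i$, $y\in\gamma_j$, $z\in\gamma_k$, and to show that the product of $[[x,y],z]$, $[[y,z],x]$ and $[[z,x],y]$ lies in $\gamma_{i+j+k+1}(H)$.

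I would use the Hall--Witt identity \eqref{eq:hall-witt}, substituting $x^{-1}$ and similar inverses so that its three factors become conjugates of $[[x^{\pm1},y],z]$-type terms. Modulo $\gamma_{i+j+k+1}(H)$ the conjugations disappear, because conjugation is trivial on the graded pieces. Inverting an entry negates its class by bilinearity, since $\overline{x^{-1}}=-\overline{x}$. Collecting the resulting signs, with antisymmetry used to reorder the entries, yields the cyclic Jacobi sum, which therefore vanishes in $\gr_{i+j+k}(H)$.

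The real care lies in the sign bookkeeping for the convention $[a,b]=aba^{-1}b^{-1}$ and in justifying that each individual factor of the identity lies in $\gamma_{i+j+k}(H)$, so that the classes are defined. The latter follows from Lemma~\ref{lem:commutator-weights} applied twice. Finally, the bracket is graded by construction, and no degree-dependent signs ever arise, since all the identities above hold verbatim regardless of the degrees.
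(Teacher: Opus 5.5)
Your proposal is correct and follows the paper's proof essentially step for step: well-definedness and additivity come from the two commutator expansion identities together with the triviality of conjugation on each $\gr_k(H)$, alternation comes from $[x,x]=1$ and $[y,x]=[x,y]^{-1}$, and Jacobi comes from reducing the Hall--Witt identity \eqref{eq:hall-witt} modulo $\gamma_{i+j+k+1}(H)$. The sign bookkeeping you defer takes a single line and needs no substitution: plugging $x,y,z$ directly into \eqref{eq:hall-witt} gives $\overline{[y^{-1},x]}_{i+j}=[-\overline y_j,\overline x_i]=[\overline x_i,\overline y_j]$ (and cyclically), so the three factors reduce exactly to the three Jacobi terms.
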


\begin{proof}
Equation~\eqref{eq:commutator-weights} places $[x,y]$ in the required
subgroup. To prove independence of representatives, replace
$x\in\gamma_i(H)$ by $xu$, where $u\in\gamma_{i+1}(H)$.
The group commutator identity
\begin{equation}\label{eq:commutator-product-first}
 [xu,y]=x[u,y]x^{-1}[x,y]
\end{equation}
and the inclusion $[u,y]\in\gamma_{i+j+1}(H)$ show that the resulting
class in $\gr_{i+j}(H)$ does not change. The identity
\begin{equation}\label{eq:commutator-product-second}
 [x,yu]=[x,y]y[x,u]y^{-1}
\end{equation}
gives the same conclusion for the second representative.

If $t\in\gamma_s(H)$ and $g\in H$, then
$gtg^{-1}t^{-1}=[g,t]\in\gamma_{s+1}(H)$. Thus conjugation by any
element of $H$ acts trivially on $\gr_s(H)$. Applying this observation
to \eqref{eq:commutator-product-first} and
\eqref{eq:commutator-product-second}, with factors in the same
respective degrees, proves additivity in each argument. We extend the
bracket to $\gr(H)$ by bilinearity. This involves only finite sums,
because an element of a direct sum has only finitely many nonzero
components.

The identities $[x,x]=1$ and $[y,x]=[x,y]^{-1}$ give alternation on
homogeneous elements and skew-symmetry. These imply alternation on
arbitrary finite sums as well. Finally, put
$x\in\gamma_i(H)$, $y\in\gamma_j(H)$, and
$z\in\gamma_\ell(H)$ into \eqref{eq:hall-witt} and reduce modulo
$\gamma_{i+j+\ell+1}(H)$. All conjugations disappear in that
quotient. The class of $[y^{-1},x]$ is
$[-\overline y_j,\overline x_i]
=[\overline x_i,\overline y_j]$, and the other two inner
commutators behave cyclically. The product in the Hall--Witt identity
therefore becomes the sum
\[
 [[\overline x_i,\overline y_j],\overline z_\ell]
 +[[\overline y_j,\overline z_\ell],\overline x_i]
 +[[\overline z_\ell,\overline x_i],\overline y_j]=0.
\]
This is Jacobi on homogeneous elements; bilinearity gives the general
identity.
\end{proof}

\begin{prop}
\label{prop:induced-graded-map}
Every group homomorphism $f:H\to K$ induces homomorphisms of abelian
groups
\begin{align*}
 \gr_k(f):\gr_k(H)&\longrightarrow\gr_k(K),\\
 x\gamma_{k+1}(H)&\longmapsto f(x)\gamma_{k+1}(K)
 \qquad(k\ge1).
\end{align*}
Their direct sum
\[
 \gr(f)=\bigoplus_{k\ge1}\gr_k(f):\gr(H)\longrightarrow\gr(K)
\]
is a degree-preserving Lie ring homomorphism.
\end{prop}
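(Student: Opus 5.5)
The plan is to check, in order, that $\gr_k(f)$ is well defined, additive, compatible with the bracket, and that the direct sum respects the grading.

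First, well-definedness. By Lemma~\ref{lem:lcs-functorial} we have $f(\gamma_k(H))\subseteq\gamma_k(K)$ and $f(\gamma_{k+1}(H))\subseteq\gamma_{k+1}(K)$. Hence the composite
\[
 \gamma_k(H)\xrightarrow{\ f\ }\gamma_k(K)\xrightarrow{\ q_k\ }\gr_k(K)
\]
is defined, and it kills $\gamma_{k+1}(H)$. Concretely, if $x$ is replaced by $xu$ with $u\in\gamma_{k+1}(H)$, then $f(xu)=f(x)f(u)$ and $f(u)\in\gamma_{k+1}(K)$. So by \eqref{eq:zero-graded-coset} the coset is unchanged. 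The composite is a group homomorphism, and the additive notation in Definition~\ref{def:graded-cosets} is just the group operation on the quotient. Therefore $\gr_k(f)$ is a homomorphism of abelian groups, with $\gr_k(f)(\overline{x}_k)=\overline{f(x)}_k$.

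Next, the direct sum. $\gr(f)=\bigoplus_k\gr_k(f)$ is additive and sends $\gr_k(H)$ into $\gr_k(K)$, so it preserves degree. For compatibility with the bracket it suffices, by bilinearity of both brackets (Proposition~\ref{prop:graded-lie-ring}), to check homogeneous elements. Take $x\in\gamma_i(H)$ and $y\in\gamma_j(H)$. Using \eqref{eq:graded-bracket} and $f([x,y])=[f(x),f(y)]$, we get
\[
 \gr(f)[\overline{x}_i,\overline{y}_j]
 =\overline{f([x,y])}_{i+j}
 =\overline{[f(x),f(y)]}_{i+j}
 =[\overline{f(x)}_i,\overline{f(y)}_j].
\]
This is exactly $[\gr(f)\overline{x}_i,\gr(f)\overline{y}_j]$.

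The only point that needs care is the last equality, which uses that $[f(x),f(y)]$ lies in $\gamma_{i+j}(K)$. This follows from Lemma~\ref{lem:commutator-weights} applied in $K$, and from $f(x)\in\gamma_i(K)$ and $f(y)\in\gamma_j(K)$. Everything else is formal, so I expect no real obstacle.

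Finally, functoriality holds for the same reason: $\gr(g\circ f)=\gr(g)\circ\gr(f)$ and $\gr(\mathrm{id})=\mathrm{id}$, since both are computed on representatives.
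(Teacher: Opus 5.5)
Your proposal is correct and follows essentially the same route as the paper. You obtain well-definedness and additivity of $\gr_k(f)$ from Lemma~\ref{lem:lcs-functorial}, check bracket preservation on homogeneous representatives via $f([x,y])=[f(x),f(y)]$, extend by bilinearity, and note functoriality at the end. The ``point that needs care'' you flag is already covered by the well-definedness of the bracket on $\gr(K)$ in Proposition~\ref{prop:graded-lie-ring}, so no further argument is needed there.
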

\begin{proof}
Lemma~\ref{lem:lcs-functorial} shows that $f(x)$ belongs to the
required subgroup. If $x\gamma_{k+1}(H)=x'\gamma_{k+1}(H)$, then
$x^{-1}x'\in\gamma_{k+1}(H)$, so
$f(x)^{-1}f(x')\in\gamma_{k+1}(K)$. Thus the formula is
well-defined. It is additive because $f(xx')=f(x)f(x')$.
For $x\in\gamma_i(H)$ and $y\in\gamma_j(H)$,
\begin{align*}
 \gr_{i+j}(f)([\overline x_i,\overline y_j])
 &=\overline{f([x,y])}_{i+j}\\
 &=\overline{[f(x),f(y)]}_{i+j}\\
 &=[\gr_i(f)(\overline x_i),\gr_j(f)(\overline y_j)].
\end{align*}
The overlines on the right-hand sides denote classes in the quotients
of $K$. This is bracket preservation; bilinearity extends it to all
of $\gr(H)$.

The formulas also show that $\gr(\operatorname{id}_H)$ is the identity
and $\gr(g\circ f)=\gr(g)\circ\gr(f)$ for a further homomorphism
$g:K\to L$ into a group $L$. In particular, a group automorphism induces a graded Lie
ring automorphism, its inverse being induced by the inverse group
automorphism.
\end{proof}

This construction for filtration-preserving homomorphisms is
\cite[Theorem~(2.2)]{Lazard1954}. The lower central
series is preserved by every group homomorphism by
Lemma~\ref{lem:lcs-functorial}.

\begin{lem}
\label{lem:degree-one}
For every group $H$, the Lie ring $\gr(H)$ is generated by
$\gr_1(H)=H_{\mathrm{ab}}$. More explicitly, every
$z\in\gr_k(H)$ can be written
\begin{equation}\label{eq:finite-lie-polynomial}
 z=F(m_1,\ldots,m_r),\qquad m_1,\ldots,m_r\in H_{\mathrm{ab}},
 \quad r<\infty,
\end{equation}
where $F$ is a finite integral linear combination of iterated Lie
brackets, each containing exactly $k$ entries. We call such an $F$ a
\emph{homogeneous Lie polynomial of degree $k$}.
\end{lem}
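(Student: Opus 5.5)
The plan is to argue by induction on $k$. The goal is the stronger statement that $\gr_k(H)$ is additively generated by left-normed brackets $[m_1,[m_2,\ldots,[m_{k-1},m_k]\cdots]]$ with every $m_i\in\gr_1(H)=H_{\mathrm{ab}}$. Each such bracket is a homogeneous Lie polynomial of degree $k$ in finitely many degree-one classes. Any element of $\gr_k(H)$ is a finite integral combination of these brackets. Collecting the finitely many entries $m_1,\ldots,m_r$ that occur then gives the presentation \eqref{eq:finite-lie-polynomial}. The identification $\gr_1(H)=\gamma_1(H)/\gamma_2(H)=H/[H,H]=H_{\mathrm{ab}}$ is immediate from the definitions, and this is also the base case $k=1$.

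For the inductive step, let $k\ge1$ and $z\in\gr_{k+1}(H)$, and write $z=\overline{x}_{k+1}$ with $x\in\gamma_{k+1}(H)=[H,\gamma_k(H)]$. By definition, $x$ is a finite product of commutators $[h,y]^{\pm1}$ with $h\in H$ and $y\in\gamma_k(H)$. In $\gr_{k+1}(H)$ the operation is induced by the group product, so
\[
 z=\sum\pm\overline{[h,y]}_{k+1}=\sum\pm[\overline{h}_1,\overline{y}_k],
\]
where the second equality is the definition \eqref{eq:graded-bracket} of the bracket from Proposition~\ref{prop:graded-lie-ring}. By the inductive hypothesis, $\overline{y}_k$ is an integral combination of left-normed brackets of $k$ degree-one classes. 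Bilinearity of the bracket, also from Proposition~\ref{prop:graded-lie-ring}, then expresses each $[\overline{h}_1,\overline{y}_k]$ as an integral combination of left-normed brackets with $k+1$ degree-one entries. This completes the induction.

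There is no real obstacle here. The only point that needs care is that $\gamma_{k+1}(H)$ is \emph{generated} by commutators $[h,y]$, so one must pass through inverses and products. The inverse case is handled by $-\overline{x}_{k+1}=\overline{x^{-1}}_{k+1}$ together with the relation $[y,h]=[h,y]^{-1}$, and products become sums in the additive notation of Definition~\ref{def:graded-cosets}. The well-definedness of the bracket on cosets is already established in Proposition~\ref{prop:graded-lie-ring}, so it is not reproved here. Finally, an arbitrary element of $\gr(H)$ is a finite sum of homogeneous components, so the Lie ring $\gr(H)$ is generated by $\gr_1(H)$.
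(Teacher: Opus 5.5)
Your proof is correct and follows the paper's argument essentially verbatim: induct on $k$, write a representative in $[H,\gamma_k(H)]$ as a finite product of commutators $[h,y]^{\pm1}$, pass to $\gr_{k+1}(H)$ to get $\sum\pm[\overline h_1,\overline y_k]$, and finish with the inductive hypothesis and bilinearity. Your sharpening to brackets of the form $[m_1,[m_2,\ldots,[m_{k-1},m_k]\cdots]]$ is exactly what this induction produces, though this shape is usually called right-normed rather than left-normed.
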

\begin{proof}
Induct on $k$. For $k=1$, the assertion follows from
$\gr_1(H)=H/\gamma_2(H)$. For $k\ge2$, the definition
$\gamma_k(H)=[H,\gamma_{k-1}(H)]$ means that every representative of
$z$ is a finite product of commutators $[a,b]$ and their inverses, where
$a\in H$ and $b\in\gamma_{k-1}(H)$. Passing to $\gr_k(H)$ turns
this finite product into a finite sum of terms
$\pm[\overline a_1,\overline b_{k-1}]$.
By induction each $\overline b_{k-1}$ is a finite integral sum of
brackets with $k-1$ degree-one entries. Bilinearity therefore expresses
$z$ as a finite integral sum of brackets with $k$ degree-one entries.
Only finitely many entries occur in this finite expression, and they
can be listed as $m_1,\ldots,m_r$.
\end{proof}

Degree-one generation is also stated in
\cite[Chapter~I]{Lazard1954}.

The expression in
\eqref{eq:finite-lie-polynomial} need not be unique; the lemma does not
assert that $\gr(H)$ is a free Lie ring. Every graded Lie homomorphism
preserves evaluation of these polynomials. Moreover, for an integer
$a$ and a homogeneous polynomial $F$ of degree $k$, bilinearity gives
\begin{equation}\label{eq:homogeneous-scaling}
 F(am_1,\ldots,am_r)=a^kF(m_1,\ldots,m_r).
\end{equation}

\begin{thm}
\label{thm:graded-detection}
Let $H$ be residually nilpotent and let $f:H\to K$ be a group
homomorphism. If $\gr_k(f)$ is injective for every $k\ge1$, then $f$
is injective.
\end{thm}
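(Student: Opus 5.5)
We argue by contradiction: if $f$ is not injective, we pick a nontrivial element $x\in\ker f$ and show that it survives to some graded piece where $\gr(f)$ kills it.

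Since $H$ is residually nilpotent, Lemma~\ref{lem:lcs-functorial} gives $\bigcap_{k\ge1}\gamma_k(H)=\{1\}$. Hence there is a largest $k\ge1$ with $x\in\gamma_k(H)$. Such a $k$ exists because $x\in\gamma_1(H)=H$, the series is descending, and $x$ lies outside some $\gamma_m(H)$. For this $k$ we have $x\in\gamma_k(H)\setminus\gamma_{k+1}(H)$. By \eqref{eq:zero-graded-coset} this means $\overline{x}_k\neq0$ in $\gr_k(H)$.

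Now apply Proposition~\ref{prop:induced-graded-map}:
\[
 \gr_k(f)(\overline{x}_k)=f(x)\gamma_{k+1}(K)=1\cdot\gamma_{k+1}(K)=0,
\]
because $f(x)=1$. So $\overline{x}_k$ is a nonzero element of $\ker\gr_k(f)$, contradicting the injectivity of $\gr_k(f)$. Therefore $\ker f=\{1\}$.

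No step here is a real obstacle. The only point that needs care is the existence of the "last" index $k$, which is exactly where residual nilpotence is used. The argument needs only injectivity of each $\gr_k(f)$, not of $\gr(f)$ as a Lie ring map, and no hypothesis on $K$ is required.
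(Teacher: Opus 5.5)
Your proof is correct and is essentially the paper's argument. The paper runs the same step as a forward induction: from $x\in\gamma_k(H)$ and injectivity of $\gr_k(f)$ it gets $x\in\gamma_{k+1}(H)$, and then invokes $\bigcap_{k}\gamma_k(H)=\{1\}$. You instead phrase it as a contradiction at the largest index $k$ with $x\in\gamma_k(H)$.
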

\begin{proof}
Let $x\in\ker f$, so $f(x)=1$. We prove by induction that
$x\in\gamma_k(H)$ for every $k$. The assertion for $k=1$ is automatic.
Suppose $x\in\gamma_k(H)$. Its class
$\overline x_k=x\gamma_{k+1}(H)$ is therefore defined, and
\[
 \gr_k(f)(\overline x_k)
 =f(x)\gamma_{k+1}(K)
 =1\gamma_{k+1}(K)=0.
\]
Injectivity of $\gr_k(f)$ implies $\overline x_k=0$.
Equation~\eqref{eq:zero-graded-coset} then gives
$x\in\gamma_{k+1}(H)$, completing the induction. Residual nilpotence
now yields
\[
 x\in\bigcap_{k\ge1}\gamma_k(H)=\{1\}.
\]
Thus $\ker f=\{1\}$.
\end{proof}

For a degree-preserving map on a direct sum, injectivity is equivalent
to injectivity on every homogeneous piece. Hence the hypothesis can
also be stated as injectivity of the full Lie ring map $\gr(f)$.
This is exactly \cite[Lemma~3.1(1)]{CohenWu2008} and is also used in
\cite[Section~1]{CohenPrassidis2006}; we have included
its complete proof. Injectivity on degree one \emph{alone} does not
suffice: the abelianization map from a free group of rank two to
$\mathbb Z^2$ is an isomorphism on degree one but kills its nontrivial
commutator subgroup. The finite-realization argument in Lemma~\ref{lem:finite-realization}
will supply injectivity in every degree for the application to $P$.

We finish this section by proving residual nilpotence of the common
pure braid kernel $P$ of $BV$ and $BF$, defined in
Subsection~\ref{subsec:braid-model}. We first recall the finite-strand
result and then use the split cloning maps to pass to $P$.

\begin{lem}\label{thm:pb-residual}
For every positive integer $n$, the pure braid group $\PB_n$ is residually nilpotent.
\end{lem}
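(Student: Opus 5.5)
The plan is to prove residual nilpotence of $\PB_n$ by induction on $n$, using the Fadell--Neuwirth fibration. Deleting the last strand gives a split surjection
\[
 1\longrightarrow F_{n-1}\longrightarrow \PB_n\longrightarrow \PB_{n-1}\longrightarrow 1,
\]
where the kernel is the free group generated by $A_{1n},\ldots,A_{n-1,n}$. The splitting comes from adjoining a straight $n^{th}$ strand, so $\PB_n=F_{n-1}\rtimes \PB_{n-1}$. The base cases $\PB_1=1$ and $\PB_2\cong\mathbb Z$ are immediate.

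The key input is the classical fact, due to Artin, that the conjugation action of $\PB_{n-1}$ on $F_{n-1}$ induces the \emph{identity} on the abelianization $(F_{n-1})_{\mathrm{ab}}$. Each generator $A_{ij}$ acts by conjugating each $A_{kn}$ by an element of $F_{n-1}$, so the action is by automorphisms that are trivial on $H_1$. I would then invoke the Falk--Randell theorem: for a split extension $F\rtimes B$ with $B$ acting trivially on $F_{\mathrm{ab}}$, one has
\[
 \gamma_k(F\rtimes B)=\gamma_k(F)\rtimes\gamma_k(B)
\]
for all $k$. Equivalently, $\gamma_k(\PB_n)\cap F_{n-1}=\gamma_k(F_{n-1})$ and the projection carries $\gamma_k(\PB_n)$ onto $\gamma_k(\PB_{n-1})$.

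With this in hand, the induction runs as follows. Let $x\in\bigcap_k\gamma_k(\PB_n)$. Its image in $\PB_{n-1}$ lies in every $\gamma_k(\PB_{n-1})$, so it is trivial by the inductive hypothesis. Hence $x\in F_{n-1}$, and by Falk--Randell $x\in\gamma_k(F_{n-1})$ for all $k$. Magnus' theorem, that free groups are residually nilpotent, then gives $x=1$. Lemma~\ref{lem:lcs-functorial} finishes the argument.

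The main obstacle is justifying the Falk--Randell equality. I would prove it by induction on $k$. The inclusion $\supseteq$ is clear. For $\subseteq$, one needs
\[
 [B,\gamma_{k}(F)]\subseteq\gamma_{k+1}(F).
\]
This holds because an automorphism trivial on $F/\gamma_2(F)$ acts trivially on each $\gamma_k(F)/\gamma_{k+1}(F)$, since these graded pieces are generated by brackets of degree-one classes (Lemma~\ref{lem:degree-one}). The remaining mixed commutators are then handled with \eqref{eq:commutator-product-first} and \eqref{eq:commutator-product-second}. Alternatively, the whole lemma may simply be cited as a standard result, via Falk--Randell or Cohen--Prassidis \cite{CohenPrassidis2006}.
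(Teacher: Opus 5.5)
Your proposal is correct and is essentially the paper's proof: the split Fadell--Neuwirth sequence, triviality of the $\PB_{n-1}$-action on $(F_{n-1})_{\mathrm{ab}}$, the Falk--Randell equality $\gamma_k(\PB_n)=\gamma_k(F_{n-1})\rtimes\gamma_k(\PB_{n-1})$ (which the paper simply cites), Magnus' theorem, and induction on $n$. One caveat applies if you prove the Falk--Randell equality yourself: expanding a generator $[fb,xc]$ of $\gamma_{k+1}(F\rtimes B)$, with $f\in F$, $b\in B$, $x\in\gamma_k(F)$, $c\in\gamma_k(B)$, also produces $[f,c]$, so besides $[B,\gamma_k(F)]\subseteq\gamma_{k+1}(F)$ you need $[F,\gamma_k(B)]\subseteq\gamma_{k+1}(F)$; this does not follow from \eqref{eq:commutator-product-first}--\eqref{eq:commutator-product-second} alone, but from a three-subgroup induction as in Lemma~\ref{lem:commutator-weights} that proves $[\gamma_i(F\rtimes B),\gamma_j(F)]\subseteq\gamma_{i+j}(F)$.
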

\begin{proof}
For $n\ge2$, forgetting the last strand gives the split
Fadell--Neuwirth sequence \cite{FadellNeuwirth1962}
\[
 1\longrightarrow F_{n-1}\longrightarrow\PB_n
 \longrightarrow\PB_{n-1}\longrightarrow1.
\]
Here $F_{n-1}$ is a free group of rank $n-1$, identified with the
fundamental group of the plane punctured at the first $n-1$ marked
points. To choose a splitting, realize the first $n-1$ strands inside
a disk and add a stationary last strand outside that disk. The action
of $\PB_{n-1}$ on the normal subgroup $F_{n-1}$ fixes its
abelianization: each meridian of a puncture is sent to a conjugate of
itself. The splitting therefore writes $\PB_n$ as an
almost-direct product $F_{n-1}\rtimes\PB_{n-1}$, meaning a
semidirect product in which the second factor acts trivially on the
abelianization of the first factor.

The Falk--Randell lower-central-series lemma for almost-direct
products \cite{FalkRandell1985,FalkRandell1988} gives
\begin{equation}\label{eq:pb-lower-central-splitting}
 \gamma_k(\PB_n)
 =\gamma_k(F_{n-1})\rtimes\gamma_k(\PB_{n-1})
 \qquad(k\ge1).
\end{equation}
We use the classical free-group facts that free groups are residually nilpotent \cite{Magnus1935}. Together with
\eqref{eq:pb-lower-central-splitting}, these are the standard
ingredients of the pure-braid residual-nilpotence argument in
\cite{FalkRandell1985,FalkRandell1988}; see also
\cite[Section~1]{Marin2012}.

Induct on $n$, starting with $\PB_1=\{1\}$. If
$p\in\bigcap_k\gamma_k(\PB_n)$, its image in $\PB_{n-1}$ lies in
every lower central subgroup, and is therefore trivial by induction.
Thus $p\in F_{n-1}$. Equation~\eqref{eq:pb-lower-central-splitting}
places $p$ in every $\gamma_k(F_{n-1})$, so $p=1$.

\end{proof}

\begin{lem}\label{lem:p-residual}
For every finite rooted binary tree $T$ and every $k\ge1$,
\begin{equation}\label{eq:p-stage-intersection}
 \gamma_k(P)\cap\PB_T=\gamma_k(\PB_T).
\end{equation}
Consequently, $P$ is residually nilpotent.
\end{lem}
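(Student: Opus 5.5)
The plan is to prove the two inclusions in \eqref{eq:p-stage-intersection} separately, and then deduce residual nilpotence from Lemma~\ref{thm:pb-residual}. The inclusion $\gamma_k(\PB_T)\subseteq\gamma_k(P)\cap\PB_T$ is immediate from Lemma~\ref{lem:lcs-functorial} applied to the inclusion $\PB_T\hookrightarrow P$. The content lies in the reverse inclusion, and for it I will use the retractions of Lemma~\ref{lem:split-cloning}.

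Let $x\in\gamma_k(P)\cap\PB_T$. By definition of $\gamma_k(P)$, the element $x$ is a finite product of iterated commutators of weight $k$ (and their inverses) with entries in $P$. Only finitely many elements of $P$ occur in such an expression. By the directed-union description \eqref{eq:p-colimit}, all of these elements, together with $x$, lie in a common stage $\PB_U$ with $T\leq U$. Hence $x\in\gamma_k(\PB_U)$, where $x$ is viewed in $\PB_U$ as $i_{T,U}(x)$. This step only uses that lower central subgroups commute with directed unions: $\gamma_k(P)=\bigcup_U\gamma_k(\PB_U)$.

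Next I apply the homomorphic left inverse $d_{U,T}:\PB_U\to\PB_T$. Lemma~\ref{lem:lcs-functorial} gives $d_{U,T}(\gamma_k(\PB_U))\subseteq\gamma_k(\PB_T)$. Since $d_{U,T}\circ i_{T,U}=\operatorname{id}$, it follows that $x=d_{U,T}(i_{T,U}(x))\in\gamma_k(\PB_T)$, which proves \eqref{eq:p-stage-intersection}. The only point needing care is the bookkeeping of identifications: the element $x\in\PB_T\subseteq P$ corresponds to $i_{T,U}(x)$ in the stage $\PB_U$, and it is the retraction, not mere injectivity of cloning, that lets us pull membership in $\gamma_k$ back down. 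I expect this to be the main, though mild, obstacle. Without a retraction, a directed union of residually nilpotent groups need not be residually nilpotent.

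Finally, for residual nilpotence, suppose $x\in\bigcap_k\gamma_k(P)$. Choose a tree $T$ with $x\in\PB_T$. By \eqref{eq:p-stage-intersection}, $x\in\bigcap_k\gamma_k(\PB_T)$, and this intersection is trivial by Lemma~\ref{thm:pb-residual}, since $\PB_T\cong\PB_n$. Thus $x=1$, and Lemma~\ref{lem:lcs-functorial} shows that $P$ is residually nilpotent.
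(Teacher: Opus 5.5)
Your proposal is correct and follows essentially the same route as the paper. The right-to-left inclusion comes from functoriality. For the reverse inclusion, you take a finite commutator witness for membership in $\gamma_k(P)$, move all its entries to a common refined stage $\PB_U$ with $T\leq U$, and then apply the strand-deletion retraction $d_{U,T}$ of Lemma~\ref{lem:split-cloning} to land in $\gamma_k(\PB_T)$; residual nilpotence then follows from Lemma~\ref{thm:pb-residual}.
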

\begin{proof}
Recall that $\PB_T$ is the pure braid group indexed by the leaves of
$T$, viewed as a subgroup of $P$. The refinement relation $U\ge T$
means that $U$ is obtained from $T$ by repeatedly splitting leaves.
For such a refinement, Lemma~\ref{lem:split-cloning} supplies the
cloning inclusion $i_{T,U}:\PB_T\hookrightarrow\PB_U$ and a
strand-deletion homomorphism $d_{U,T}:\PB_U\to\PB_T$ with
$d_{U,T}i_{T,U}=\operatorname{id}_{\PB_T}$. Retaining one chosen
descendant of each leaf of $T$ constructs this left inverse.

Functoriality gives the inclusion from right to left in
\eqref{eq:p-stage-intersection}. Conversely, let
$p\in\gamma_k(P)\cap\PB_T$. Membership in $\gamma_k(P)$ is
witnessed by a finite expression: recursively use
$\gamma_{s+1}(P)=[P,\gamma_s(P)]$ to write $p$ as a finite product
of commutators whose entries are themselves finite products of
commutators of the required smaller weights. Only finitely many
elements of $P$ occur in this expression. Choose a finite tree stage
for each of them, and then choose a common refinement $U$ of those
trees and $T$. All entries now belong to $\PB_U$. The same expression
shows that $p\in\gamma_k(\PB_U)$; equality of the expression and
$p$ holds there because the stage embeddings into $P$ are injective.
Applying the homomorphism $d_{U,T}$ gives
\[
 p=d_{U,T}(p)\in d_{U,T}(\gamma_k(\PB_U))
 \subseteq\gamma_k(\PB_T),
\]
where $p$ on the left is its original representative in $\PB_T$.
This proves \eqref{eq:p-stage-intersection}.

Finally, if $p\in\bigcap_k\gamma_k(P)$, choose one tree $T$ with
$p\in\PB_T$. The equality just proved implies
\[
 p\in\bigcap_{k\ge1}\gamma_k(\PB_T)=\{1\},
\]
where the final equality is Lemma~\ref{thm:pb-residual}.
Thus $P$ is residually nilpotent.
\end{proof}

The strand-deletion retractions in Lemma~\ref{lem:split-cloning}
ensure that no finite-stage element enters a lower central subgroup
only after passing to a larger tree stage. They are the reason the
finite pure-braid intersection calculation also proves residual
nilpotence of $P$.

Note that the kernel $P$ is not finitely generated. So the residual nilpotence does not imply the Hopficity. In fact, the kernel $P$ is not Hopfian \cite{Zaremsky2018}.

\begin{prop}\cite[Proposition~4.8]{Zaremsky2018}
\label{prop:p-nonhopfian}
The common pure braid kernel
\[
 P=\ker(BV\to V)=\ker(BF\to F)
\]
is non-Hopfian. More precisely, it admits a surjective, noninjective
endomorphism obtained by deleting all strands in the right half of a
tree-braid-tree diagram.
\end{prop}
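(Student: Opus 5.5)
We exhibit the endomorphism explicitly and check surjectivity and non-injectivity directly from the colimit description \eqref{eq:p-colimit}. Let $T_0$ be the tree with a single caret, so its leaves are $[0]$ and $[1]$. Every element of $P$ lies in some $\PB_T$ with $T\ge T_0$, after refining if necessary. For such $T$, write $T=T_0(T_L,T_R)$, where $T_L$ and $T_R$ are the subtrees hanging from the leaves $0$ and $1$. Deleting every strand whose leaf lies under $[1]$ gives a homomorphism $\PB_T\to\PB_{T_L}$. The argument in Lemma~\ref{lem:split-cloning} shows this is a homomorphism: pure braids preserve strand labels, so deleting a fixed set of strands is multiplicative. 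Here $\PB_{T_L}$ is regarded as a stage of $P$ by reading $T_L$ as a tree rooted at $\Cantor$.

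Define $\delta:P\to P$ stagewise by $\delta([T,p,T])=[T_L,p_L,T_L]$, where $p_L$ is the left half of $p$. The first step is to check compatibility with the directed system, so that $\delta$ is well defined on the union. Suppose we clone a leaf of $T$. If the leaf lies under $[0]$, then $T_L$ gains the same caret and $p_L$ gets the same cloned strand. If the leaf lies under $[1]$, then $T_L$ and $p_L$ are unchanged, because the new strands are deleted anyway. Hence $\delta$ commutes with the cloning inclusions $i_{T,U}$ and defines a homomorphism on $P=\bigcup_T\PB_T$.

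Surjectivity comes from an explicit section. Given $[S,q,S]\in\PB_S$, take $T=T_0(S,\ast)$, with the trivial one-leaf tree on the right. Let $p=q\sqcup 1$, the pure braid on the strands of $S$ with one extra straight strand added on the right. Then $p$ is pure, and $\delta([T,p,T])=[S,q,S]$.

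Non-injectivity uses $T=T_0(\ast,T_0)$, which has three leaves $0,10,11$. Take the generator $A_{23}$, which twists the two right strands around each other. Its image under $\delta$ keeps only strand $1$, so it is trivial in $\PB_1=1$. On the other hand $[T,A_{23},T]\ne1$ in $P$, because stage embeddings are injective and $A_{23}\neq1$ in $\PB_3$ (its linking number is $\ell_{23}=1$).

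The only real obstacle is the compatibility check. Cloning a strand under $[0]$ must correspond exactly to cloning the matching leaf of $T_L$, including the relabelling that comes from reading $T_L$ at the root. Once that identification of leaves is written down carefully, the rest is routine.
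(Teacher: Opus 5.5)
Your proof is correct and carries out exactly the construction named in the statement; the paper does not prove this result itself and only cites Zaremsky's Proposition~4.8. The three ingredients you check are all valid: compatibility of right-half strand deletion with the cloning inclusions, so that it is well defined on $P=\bigcup_T PB_T$; the section $q\mapsto q\sqcup 1$ over $T_0(S,\ast)$, giving surjectivity; and the nontrivial kernel element $[T_0(\ast,T_0),A_{23},T_0(\ast,T_0)]$, giving noninjectivity. Together they form a complete, self-contained proof of the cited result.
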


\section{Hopficity of the braided Thompson group \texorpdfstring{$BV$}{BV}}
\label{sec:bv-hopficity}

The goal of this section is to prove that $BV$ is Hopfian, by showing
that every surjective endomorphism is injective. By Lemma~\ref{lem:braided-epi-reduction}, it suffices to prove the injectivity of the induced endomorphism $\beta$ of $P$. The kernel $P$ is residually nilpotent by Lemma~\ref{lem:p-residual}. Hence, we can apply the graded injectivity criterion as Theorem~\ref{thm:graded-detection}.

\subsection{The linking-number module}

Based on graded injectivity criterion, we first focus on the $1^{st}$ graded piece $gr_1(P)$ of the graded Lie ring as well as the induced map $gr_1(\beta)$. By definition, $gr_1(P)=\gamma_1(P)/\gamma_{2}(P)=P/[P,P]$. We say that this Abelian group coincides with the $\mathbb{Z}$-valued compact-supported functions on a locally compact, Hausdorff, metrizable, zero-dimensional,
and with no isolated points space $Y$. Moreover, we say that $gr_1(P)$ also has a $\mathbb{Z}V$-module structure. The $\mathbb{Z}V$-module structure will help us get the injectivity of $gr(\beta)$.

\begin{defn}\label{def:pair-space}
Let
\[
 \Delta=\{(x,x):x\in\Cantor\},\qquad
 F_2(\Cantor)=(\Cantor\times\Cantor)\setminus\Delta.
\]
Thus $F_2(\Cantor)$ is the space of ordered pairs of distinct Cantor points.
The two-element symmetric group $\mathfrak S_2$ acts by exchanging the
coordinates. Give
\[
 Y=F_2(\Cantor)/\mathfrak S_2
\]
the quotient topology, and write
\[
 q:F_2(\Cantor)\longrightarrow Y,\qquad q(x,y)=\{x,y\},
\]
for the quotient map. Points of $Y$ are therefore unordered pairs of
distinct points; in particular, $\{x,x\}$ is not a point of $Y$.

Give $\mathbb Z$ the discrete topology, and put
\[
 M=C_c(Y,\mathbb Z).
\]
Here $C_c(Y,\mathbb Z)$ denotes the additive group of continuous functions
$f:Y\to\mathbb Z$ whose support
\[
 \operatorname{supp}(f)
   =\overline{\{\eta\in Y:f(\eta)\ne0\}}^{\,Y}
\]
is compact. Continuity into a discrete space is equivalent to local
constancy: each $\eta\in Y$ has an open neighborhood on which $f$ is
constant. Moreover, $f^{-1}(\mathbb Z\setminus\{0\})$ is both open and
closed, so
\[
 \operatorname{supp}(f)=\{\eta\in Y:f(\eta)\ne0\}.
\]
Addition and negation in $M$ are pointwise:
$(f+g)(\eta)=f(\eta)+g(\eta)$ and $(-f)(\eta)=-f(\eta)$.
\end{defn}

For disjoint clopen subsets $A,B\subseteq\Cantor$, define
\[
 Y_{A,B}:=q(A\times B)
  =\{\{x,y\}\in Y:x\in A,\ y\in B\},\qquad
 e_{A,B}:=\mathbf 1_{Y_{A,B}}.
\]
The notation $\mathbf 1_E$ means the function with value $1$ on a set $E$
and value $0$ outside it. Since the pairs are unordered,
$Y_{A,B}=Y_{B,A}$ and $e_{A,B}=e_{B,A}$. If one of $A,B$ is empty, then
$e_{A,B}=0$. The restriction $q|_{A\times B}$ is a homeomorphism onto
$Y_{A,B}$. In particular, $Y_{A,B}$ is compact. It is open because
\[
 q^{-1}(Y_{A,B})=(A\times B)\sqcup(B\times A)
\]
is open, and it is closed by the same calculation with complements.
Consequently $e_{A,B}\in M$.

By the definition of the topology on Y, we have the following fact:

\begin{lem}\label{lem:pair-space-topology}
The space $Y$ is locally compact, Hausdorff, metrizable and zero-dimensional,
and it has no isolated points. It is not compact, so it is not a Cantor
space. For every clopen $A\subseteq\Cantor$, the same-side pair space
\[
 Y_A=q\bigl((A\times A)\setminus\Delta\bigr)
     =\{\{x,y\}\in Y:x,y\in A\}
\]
is clopen in $Y$. If $A\ne\varnothing$, then $Y_A$ is noncompact.
\end{lem}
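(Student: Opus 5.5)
The plan is to realize $Y$ concretely as an open subset of a compact metrizable space and read off each property from that realization. First, $F_2(\Cantor)=(\Cantor\times\Cantor)\setminus\Delta$ is open in the compact metrizable zero-dimensional space $\Cantor\times\Cantor$, because $\Delta$ is closed ($\Cantor$ is Hausdorff). Hence $F_2(\Cantor)$ is locally compact, metrizable and zero-dimensional. The sets $A\times B$ with $A,B$ disjoint cylinders form a basis of compact open sets for it: given $x\neq y$, take cylinders around $x$ and $y$ long enough to be disjoint.

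Next I pass to the quotient by the free involution. The map $q$ is open, since $q^{-1}(q(W))=W\cup\tau(W)$, where $\tau$ is the coordinate swap. So the images $Y_{A,B}=q(A\times B)$ of the basic sets form a basis of $Y$. The discussion before the lemma already shows each $Y_{A,B}$ is compact, open and closed, and homeomorphic to $A\times B$. Therefore $Y$ has a basis of compact open sets, which makes it locally compact and zero-dimensional.

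For Hausdorffness, take $\{x,y\}\neq\{x',y'\}$ and choose cylinders separating the at most four points involved. Their $Y_{A,B}$-neighbourhoods are then disjoint: a common point would force some coordinate to lie in two disjoint cylinders. Metrizability follows from second countability (there are countably many cylinder pairs) together with Urysohn's theorem for regular spaces; a locally compact Hausdorff space is regular. Alternatively, I would write down the Hausdorff distance on two-point subsets. There are no isolated points because each basic set $Y_{A,B}$ is homeomorphic to $A\times B$, which is a nonempty Cantor space.

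Finally I handle the clopen and noncompactness claims. The set $q^{-1}(Y_A)=(A\times A)\setminus\Delta$ is open, as it is the intersection of an open set with $F_2(\Cantor)$. Its complement in $F_2(\Cantor)$ is $F_2(\Cantor)\cap\bigl((A^c\times\Cantor)\cup(\Cantor\times A^c)\bigr)$, which is also open. Since $q$ is a quotient map, $Y_A$ is clopen. For noncompactness with $A\neq\varnothing$, pick $x\in A$ and a sequence $y_n\to x$ in $A$ with $y_n\neq x$; such a sequence exists because $A$ has no isolated points. The open cover of $Y_A$ by the sets $\{\{u,v\}:d(u,v)>1/m\}$ has no finite subcover, since $d(x,y_n)\to 0$. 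Taking $A=\Cantor$ shows $Y$ itself is not compact, and so $Y$ is not a Cantor space. The only delicate step is the openness of $q$ and the resulting basis description; everything else is routine point-set topology.
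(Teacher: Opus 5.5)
The paper gives no proof of this lemma. It is asserted ``by the definition of the topology on $Y$'', after the remarks that $q|_{A\times B}$ is a homeomorphism onto $Y_{A,B}$ and that $Y_{A,B}$ is compact and clopen. Your proposal is correct and supplies the omitted point-set argument. Your observation that $q$ is open is what makes $q|_{A\times B}$ a homeomorphism onto $Y_{A,B}$ (it is an open continuous bijection there because $A\cap B=\varnothing$), which the paper leaves unjustified. It also makes the sets $Y_{A,B}$ with $A,B$ disjoint cylinders a countable basis of $Y$. From this, local compactness, zero-dimensionality, second countability and the absence of isolated points follow as you say. Metrizability then follows by Urysohn once Hausdorffness is established.

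Two places deserve one more line each.

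\begin{enumerate}
\item \textbf{Hausdorff step.} Set $S=\{x,y,x',y'\}$, which has three or four elements. Choose one cylinder $C_s$ around each $s\in S$, pairwise disjoint, and use $Y_{C_x,C_y}$ and $Y_{C_{x'},C_{y'}}$ as neighbourhoods. A pair $\{u,w\}$ lying in both determines a unique two-element subset $\{s,s'\}\subseteq S$ with $u\in C_s$ and $w\in C_{s'}$. This subset would have to equal both $\{x,y\}$ and $\{x',y'\}$, a contradiction. Your phrase ``some coordinate lies in two disjoint cylinders'' is this argument, stated loosely.
\item \textbf{Noncompactness step.} Fix a concrete metric, e.g.\ $d(u,v)=2^{-\min\{j:u_j\neq v_j\}}$. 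The sets $U_m=\{\{u,v\}:d(u,v)>1/m\}$ are open in $Y$ because their preimages under $q$ are open and saturated. They are also increasing in $m$, so a finite subcover of $Y_A$ would lie inside a single $U_M$. That contradicts $\{x,y_n\}\in Y_A$ with $d(x,y_n)\to0$.
\end{enumerate}

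Finally, ``a basis of compact open sets gives zero-dimensionality'' needs either Hausdorffness or the closedness of $Y_{A,B}$ that you cite from the preceding discussion. You have the latter, so the order of your steps causes no gap.
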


\begin{rem}
The reason why we consider space $Y$ and the Abelian group $M$ is related to the expression of elements in $P$. one element in $P$ can be represented by a tree-pure braid-tree diagram. The initial and terminal of each strand are both leaf of a tree which corresponds to a cylinder clopen set of $\mathfrak{C}$. When passing to the Abelianization, the linking-number of every two strands coincides with characteristic function on the clopen set of $Y$.
\end{rem}

For every clopen $A\subseteq\Cantor$, let $A^c=\Cantor\setminus A$ and
define its \emph{cut function} by
\[
 c_A=e_{A,A^c}.
\]
Thus $c_A(\{x,y\})=1$ exactly when one of $x,y$ lies in $A$ and the
other lies outside $A$. In particular $c_A=c_{A^c}$ and
$c_\varnothing=c_\Cantor=0$. These functions belong to $M$, since their
supports are crossing-pair spaces $Y_{A,A^c}$, rather than same-side spaces.

\begin{lem}\label{lem:pair-functions}
For every finite family $f_1,\ldots,f_r\in M$, there is a finite cylinder
partition $A_1,\ldots,A_n$ of $\Cantor$ such that each $f_s$ is constant
on $Y_{A_i,A_j}$ for $i\ne j$ and is zero on each $Y_{A_i}$.
Consequently there are integers $a_{s,ij}$ with
\begin{equation}\label{eq:finite-pair-function}
 f_s=\sum_{1\le i<j\le n}a_{s,ij}e_{A_i,A_j}.
\end{equation}
In particular, the functions $e_{A,B}$ for disjoint clopen $A,B$ generate
$M$ as an abelian group, and
\[
 \operatorname{supp}(f_s)
    =\bigcup_{\substack{i<j\\a_{s,ij}\ne0}}Y_{A_i,A_j}.
\]
\end{lem}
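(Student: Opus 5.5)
The plan is to exploit compactness of supports together with the fact that the sets $Y_{A,B}$, for cylinders $A,B$ with $A\cap B=\varnothing$, form a basis of compact open sets for $Y$. The argument proceeds in three steps: first handle a single function $f\in M$, then pass to finitely many functions by common refinement, and finally read off the expansion~\eqref{eq:finite-pair-function}.

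\emph{Single function.} Fix $f\in M$ and let $K=\operatorname{supp}(f)$, which is compact and open in $Y$. For each $\eta=\{x,y\}\in K$, local constancy of $f$ gives an open neighbourhood of $\eta$ on which $f$ is constant. Pulling back along $q$ and using that cylinders form a basis of $\Cantor$, choose disjoint cylinders $A_\eta\ni x$ and $B_\eta\ni y$ with $f$ constant on $Y_{A_\eta,B_\eta}$. These sets cover $K$, so by compactness finitely many of them suffice, say $Y_{A_1',B_1'},\dots,Y_{A_m',B_m'}$.

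\emph{Common refinement.} Take a cylinder partition $A_1,\dots,A_n$ refining every $A_t'$, every $B_t'$, and the complement of $\bigcup_t(A'_t\cup B'_t)$; this exists by common refinement of finite trees. The main obstacle is checking that this partition has both required properties.

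First, for $i\neq j$, $f$ is constant on $Y_{A_i,A_j}$. If $Y_{A_i,A_j}$ meets $K$, pick a point $\eta$ in the intersection; it lies in some $Y_{A_t',B_t'}$. Since each $A_i$ lies inside or outside $A_t'$, and likewise for $B_t'$, all of $Y_{A_i,A_j}$ lies in $Y_{A'_t,B'_t}$, where $f$ is constant. If instead $Y_{A_i,A_j}$ is disjoint from $K$, then $f$ vanishes there.

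Second, $f$ vanishes on each $Y_{A_i}$. Suppose $\eta=\{x,y\}\in Y_{A_i}$ with $f(\eta)\ne0$. Then $\eta\in Y_{A'_t,B'_t}$ for some $t$, so $x\in A'_t$ and $y\in B'_t$ (after swapping). But $A_i$ lies entirely inside $A'_t$ or entirely outside it, and the same holds for $B'_t$. Since $A'_t$ and $B'_t$ are disjoint, the points $x$ and $y$, both in $A_i$, cannot lie in $A'_t$ and $B'_t$ respectively. This is a contradiction.

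\emph{Finitely many functions and the expansion.} For several functions $f_1,\dots,f_r$, take a common refinement of the individual partitions; both properties are preserved under refinement. Since $Y=\bigsqcup_{i<j}Y_{A_i,A_j}\sqcup\bigsqcup_i Y_{A_i}$, setting $a_{s,ij}$ equal to the constant value of $f_s$ on $Y_{A_i,A_j}$ gives~\eqref{eq:finite-pair-function}. The support formula follows from disjointness of these pieces, and generation of $M$ by the functions $e_{A,B}$ is then immediate.
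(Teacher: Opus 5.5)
Your proof is correct, but it applies compactness in a different place from the paper.

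\emph{The paper's route.} The paper lifts each $f_s$ to $\Cantor\times\Cantor$. It extends $f_s\circ q$ by $0$ on the diagonal $\Delta$. The extension $\widetilde f_s$ is still locally constant, because $q^{-1}(\operatorname{supp} f_s)$ is compact in $\Cantor\times\Cantor$ and so misses an open neighbourhood of $\Delta$. Compactness of $\Cantor\times\Cantor$ then gives a single cylinder length such that every $\widetilde f_s$ is constant on every product of two cylinders of that length. Vanishing on the same-side blocks $Y_{A_i}$ is automatic, because $A_i\times A_i$ meets $\Delta$.

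\emph{Your route.} You stay inside $Y$ and cover the compact set $\operatorname{supp}(f)$ by finitely many basic sets $Y_{A'_t,B'_t}$ with $A'_t\cap B'_t=\varnothing$. You then get both required properties from a containment argument: each fine cylinder lies inside or outside each $A'_t$ and each $B'_t$. Vanishing on $Y_{A_i}$ comes from the disjointness of $A'_t$ and $B'_t$, not from the diagonal.

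\emph{Comparison.} Your route avoids two justifications the paper needs: compactness of the lift of the support (which the paper gets via $\widehat Y$), and local constancy of the extension at diagonal points. The cost is a short case check, plus a separate common-refinement step when there are several functions. The paper's diagonal trick disposes of the same-side blocks in one line and handles all $f_s$ at once through one common length.
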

\begin{proof}
Fix $s$ and let $K_s=\operatorname{supp}(f_s)$, which is compact in $Y$. Let $\widehat Y=(\mathfrak{C}\times\mathfrak{C})/\mathfrak{S_2}$. 
The set $\widetilde K_s=q^{-1}(K_s)$ is a compact subset of
$F_2(\Cantor)$ and is disjoint from $\Delta$. To see compactness directly,
regard $K_s$ as a compact, hence closed, subset of $\widehat Y$ and take
its inverse image under the quotient map from the compact space
$\Cantor\times\Cantor$ to $\widehat Y$.

The function $f_s\circ q$ on $F_2(\Cantor)$ is symmetric and locally
constant. Extend it to a function $\widetilde f_s$ on
$\Cantor\times\Cantor$ by assigning the value $0$ on $\Delta$.
This extension remains locally constant: the complement of the compact
set $\widetilde K_s$ is an open neighborhood of $\Delta$ on which it
vanishes. Away from $\Delta$, local constancy follows from that of $f_s$.

The compact space $\Cantor\times\Cantor$ has a finite cover by products
of cylinders on which $\widetilde f_s$ is constant. Refine their cylinder
lengths to one common length. After increasing that length for all
$s=1,\ldots,r$, every $\widetilde f_s$ is constant on every product of
two cylinders of that length. If the two cylinders agree, their product
meets the diagonal, where $\widetilde f_s$ is zero; hence its constant
value is zero on the entire product. Taking all cylinders of this common
length as the $A_i$ proves the assertion. Symmetry gives the indexing
$i<j$ in \eqref{eq:finite-pair-function}. The support formula follows
because the crossing-pair blocks are disjoint.
\end{proof}

\begin{ex}
For the partition $A_1=[0]$, $A_2=[10]$, $A_3=[11]$, the function
\[
 f=2e_{A_1,A_2}-3e_{A_2,A_3}
\]
has value $2$ on pairs crossing from $[0]$ to $[10]$, value $-3$ on
pairs crossing from $[10]$ to $[11]$, and value $0$ on every other pair.
Its support is the compact clopen set
$Y_{A_1,A_2}\sqcup Y_{A_2,A_3}$. It need not have finite support as a
set of points: compact support here can contain infinitely many pairs.
\end{ex}

We first prove that as an Abelian group $P_{ab}$ is isomorphic to $M$.

\begin{prop}\label{prop:linking-module}
There is a surjective homomorphism $\lambda:P\to M$ with kernel
$[P,P]$. It induces an isomorphism of abelian groups
\[
 \Lambda:P_{\mathrm{ab}}=P/[P,P]\xrightarrow{\cong}M,
 \qquad \Lambda\bigl(p[P,P]\bigr)=\lambda(p).
\]
More explicitly, if $p=[T,p_T,T]\in P$, where $T$ has leaf cylinders
$A_1,\ldots,A_n$ and $p_T\in\PB_n$, then
\begin{equation}\label{eq:linking-function}
 \lambda(p)=\sum_{1\le i<j\le n}\ell_{ij}(p_T)e_{A_i,A_j}.
\end{equation}
Thus its value on a pair in $Y_{A_i,A_j}$ is the linking number of
strands $i,j$, and its value on every $Y_{A_i}$ is zero.

\end{prop}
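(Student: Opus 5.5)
We define $\lambda$ stagewise and pass to the colimit \eqref{eq:p-colimit}. For a tree $T$ with leaf cylinders $A_1,\ldots,A_n$, let $\lambda_T:\PB_T\to M$ be given by \eqref{eq:linking-function}. It is a homomorphism because each $\ell_{ij}$ is a homomorphism $\PB_n\to\mathbb Z$ (strand deletion onto $\PB_2$). Its kernel is exactly $[\PB_T,\PB_T]$. Indeed, the functions $e_{A_i,A_j}$ for $i<j$ have pairwise disjoint nonempty supports, so they are $\mathbb Z$-linearly independent in $M$. Hence $\lambda_T$ factors as the abelianization \eqref{eq:pb-abelianization} followed by the injective map $t_{ij}\mapsto e_{A_i,A_j}$.

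Next we check compatibility with cloning. It suffices to treat one caret, splitting $A_k$ into $A_k'=A_k0$ and $A_k''=A_k1$, with the strand $k$ doubled into two parallel strands. For a pure braid $p$ and its clone $p'$, the linking number of a clone of $k$ with a strand $j\neq k$ equals $\ell_{kj}(p)$. This holds because deleting the other clone returns $p$, as in Lemma~\ref{lem:split-cloning}. The two clones of strand $k$ are parallel, so their mutual linking number is $0$. This is clearest on generators: cloning $A_{ij}$ produces a braid whose two clones do not wind around each other.

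Since $e_{A_k,A_j}=e_{A_k',A_j}+e_{A_k'',A_j}$ and the pair $\{A_k',A_k''\}$ receives coefficient $0$, we get $\lambda_U\circ i_{T,U}=\lambda_T$. Therefore the $\lambda_T$ glue to a homomorphism $\lambda:P\to M$. This is the step requiring the most care: we must confirm that the clones' mutual linking is zero under the chosen cloning convention (any fixed twist would break compatibility). I would verify it on the generators $A_{ij}$ and, if needed, on a generator where one index is the cloned strand.

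Surjectivity follows from Lemma~\ref{lem:pair-functions}. Any $f\in M$ is $\sum a_{ij}e_{A_i,A_j}$ for a cylinder partition. Such a partition is the leaf set of some tree $T$, and $\prod A_{ij}^{a_{ij}}\in\PB_T$ maps to $f$.

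For the kernel, $[P,P]\subseteq\ker\lambda$ because $M$ is abelian. Conversely, let $p\in\ker\lambda$ and choose $T$ with $p\in\PB_T$. Then $\lambda_T(p)=0$, so $p\in[\PB_T,\PB_T]\subseteq[P,P]$ by the stagewise kernel computation. Hence $\ker\lambda=[P,P]$, and $\Lambda$ is an isomorphism by the first isomorphism theorem. The value description follows from the disjointness of the blocks $Y_{A_i,A_j}$ together with the vanishing of $\lambda(p)$ on each $Y_{A_i}$.
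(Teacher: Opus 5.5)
Your proposal is correct and follows essentially the same route as the paper. Both arguments define the linking-number function at a finite stage, check invariance under a single caret via $e_{A_k,A_j}=e_{A_k0,A_j}+e_{A_k1,A_j}$ and the zero mutual linking of the two clones, then obtain surjectivity from Lemma~\ref{lem:pair-functions} and the kernel from the finite-stage abelianization. Your justification that a clone links with other strands exactly as the original did, via the strand-deletion retraction of Lemma~\ref{lem:split-cloning}, makes explicit a step the paper only asserts.
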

\begin{proof}
From \eqref{eq:pb-abelianization}: the classes of
the standard generators $A_{ij}$ form a basis for
$(\PB_n)_{\mathrm{ab}}\cong\mathbb Z^{\binom n2}$, and the linking
numbers are its coordinates. See \cite[Section~2]{CohenPrassidis2006}.
We check explicitly how this calculation passes through the strand-cloning
describing $P$.

Suppose that the leaf cylinder $A_i$ is split as
$A_i=A_{i0}\sqcup A_{i1}$, where $A_{i0},A_{i1}$ are its two child
cylinders. Cloning strand $i$ gives two parallel descendants with zero
mutual linking. The linking number of either descendant with a strand
$j\ne i$ is the old linking number $\ell_{ij}(p_T)$; linking numbers
between two unchanged strands are unchanged. In particular, on the
abelianization the basis element for the pair $i,j$ is sent to the sum
of the two basis elements for the pairs $i0,j$ and $i1,j$. This is
exactly the identity
\[
 e_{A_i,A_j}=e_{A_{i0},A_j}+e_{A_{i1},A_j}.
\]
The newly created block $Y_{A_{i0},A_{i1}}$ has value zero, agreeing with
the value before refinement on pairs in $Y_{A_i}$. These statements
cover every pair of new leaves, so \eqref{eq:linking-function} is
unchanged by expansion. Common refinements show that it is independent
of the representative of $p$.

To check the homomorphism property, represent $p,p'\in P$ at a common
tree $T$. Pairwise linking numbers satisfy
$\ell_{ij}(p_Tp'_T)=\ell_{ij}(p_T)+\ell_{ij}(p'_T)$. Consequently
\[
 \lambda(pp')=\lambda(p)+\lambda(p').
\]
For surjectivity, take $f\in M$ and use
Lemma~\ref{lem:pair-functions} to write it as
$\sum_{i<j}a_{ij}e_{A_i,A_j}$ for the leaf partition of one tree $T$.
Choose a pure braid having linking coordinates $a_{ij}$, for example a
product of the powers $A_{ij}^{a_{ij}}$ in any fixed order. Formula
\eqref{eq:linking-function} sends that element of $P$ to $f$.

Because $M$ is abelian, the homomorphism $\lambda$ kills $[P,P]$.
Conversely, if $\lambda([T,p_T,T])=0$, evaluate this zero function on a
pair with one point in $A_i$ and one in $A_j$, for every $i<j$.
All linking numbers $\ell_{ij}(p_T)$ vanish. The finite-stage
abelianization therefore gives $p_T\in[\PB_n,\PB_n]$, and the inclusion
$\PB_T\hookrightarrow P$ sends this commutator subgroup into $[P,P]$.
Thus $\ker\lambda=[P,P]$, and the first isomorphism theorem gives
$\Lambda$.

\end{proof}

\begin{defn}\label{def:pair-pushforward}
If $h\in\Homeo(\Cantor)$, its induced homeomorphism of $Y$ is
\[
 h^{(2)}:Y\longrightarrow Y,\qquad
 h^{(2)}(\{x,y\})=\{h(x),h(y)\}.
\]
Its inverse is $(h^{-1})^{(2)}$. Define the additive pushforward map
$h_*:M\to M$ by
\begin{equation}\label{eq:spatial-pushforward}
 (h_*f)(\{x,y\})=f\bigl(\{h^{-1}(x),h^{-1}(y)\}\bigr).
\end{equation}
\end{defn}

This formula defines an automorphism of the additive group $M$. Indeed,
composition with a homeomorphism preserves local constancy, and
\[
 \operatorname{supp}(h_*f)
       =h^{(2)}\bigl(\operatorname{supp}(f)\bigr)
\]
is compact. The map preserves addition pointwise, and
$(h^{-1})_*$ is its inverse. For homeomorphisms $h,g$, direct substitution
in \eqref{eq:spatial-pushforward} gives
\begin{equation}\label{eq:pair-pushforward-rules}
 (hg)_*=h_*g_*,\qquad
 h_*e_{A,B}=e_{h(A),h(B)},\qquad h_*c_A=c_{h(A)}.
\end{equation}
Restricting this action to $V\le\Homeo(\Cantor)$ makes $M$ a
$\mathbb ZV$-module. A $\mathbb ZV$-module endomorphism is
therefore an additive map $L:M\to M$ satisfying $Lv_*=v_*L$ for every
$v\in V$.

Moreover, the conjugation of $V$ on $P_{ab}$ induces a $\mathbb ZV$-module structure on $P_{ab}$. And $\Lambda$ is a $\mathbb{Z}V$-module isomorphism.

\begin{lem}
\label{lem:linking-conjugation}
Let $b\in BV$, put $v=\pi(b)\in V$, and let $p\in P$.
Then, for all distinct points $x,y\in\Cantor$,
\begin{equation}\label{eq:linking-equivariance}
 \lambda(bpb^{-1})(\{v(x),v(y)\})=\lambda(p)(\{x,y\}),
 \qquad x\ne y.
\end{equation}
Equivalently, $\lambda(bpb^{-1})=v_*\lambda(p)$.
Conjugation induces a well-defined action of $V$ on $P_{\mathrm{ab}}$,
and $\Lambda$ is a $\mathbb{Z}V$-module isomorphism.
\end{lem}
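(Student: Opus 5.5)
The identity \eqref{eq:linking-equivariance} will be checked on a single braided tree-pair diagram for $b$, after refining so that $p$ sits at the source tree. The three other assertions then follow formally.

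\textbf{Step 1: choose compatible diagrams.} Write $b=[T_-,\beta,T_+]$ with $\beta\in B_n$, where $T_+$ is the tree from which we read $b$ as a map. Conventions for composition may require swapping the roles of $T_\pm$; this changes nothing below. Represent $p$ as $[T',p',T']$. Since expansions of $b$ at leaves of $T_+$ are always allowed, we may pass to a common refinement and assume $T'=T_+$, so $p=[T_+,p_{T_+},T_+]$. Let $A_1,\ldots,A_n$ be the leaf cylinders of $T_+$ and $B_1,\ldots,B_n$ those of $T_-$. By the definition of $\pi$, $v$ maps $A_i$ onto $B_{\bar\beta(i)}$ by prefix replacement, where $\bar\beta$ is the endpoint permutation of $\beta$.

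\textbf{Step 2: compute the conjugate at one stage.} Stacking gives
\[
bpb^{-1}=[T_-,\beta p_{T_+}\beta^{-1},T_-],
\]
which is a pure braid at stage $T_-$. Pairwise linking numbers are conjugation-invariant up to relabeling of strands:
\[
\ell_{\bar\beta(i)\bar\beta(j)}(\beta p\beta^{-1})=\ell_{ij}(p).
\]
To see this, delete all strands of $\beta p\beta^{-1}$ except the two ending at positions $\bar\beta(i),\bar\beta(j)$. The result is $\beta'p''\beta'^{-1}$ in $B_2$, where $p''\in\PB_2$ is obtained from $p$ by deleting all strands except $i$ and $j$. Since $B_2\cong\mathbb Z$ is abelian, this equals $p''$. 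The restriction-to-two-strands operation on the conjugate is a legitimate deletion because $p$ is pure. This bookkeeping of which strands are deleted is the one step where care is needed, and it is the main obstacle I expect.

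\textbf{Step 3: read off the identity.} Formula \eqref{eq:linking-function} gives
\[
\lambda(bpb^{-1})=\sum_{i<j}\ell_{ij}(p)\,e_{B_{\bar\beta(i)},B_{\bar\beta(j)}}.
\]
Since $v(A_i)=B_{\bar\beta(i)}$, \eqref{eq:pair-pushforward-rules} turns this into $v_*\lambda(p)$. Evaluating at $\{v(x),v(y)\}$ then gives \eqref{eq:linking-equivariance}, including the case $x,y\in A_i$, where both sides vanish.

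\textbf{Step 4: consequences.} Conjugation by $b$ preserves $P$ and $[P,P]$, so it acts on $P_{\mathrm{ab}}$. If $b\in P$, then $v=1$ and the action is trivial by the identity just proved, or directly because $P$ acts trivially on its own abelianization. Hence the action factors through $BV/P\cong V$. The identity $\lambda(bpb^{-1})=v_*\lambda(p)$ says exactly that $\Lambda$ intertwines this action with $v_*$. Together with Proposition~\ref{prop:linking-module}, this makes $\Lambda$ a $\mathbb ZV$-module isomorphism.
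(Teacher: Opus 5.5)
Your proposal is correct and follows the paper's proof. You refine so that $p$ sits at the source tree of $b$, show that conjugation permutes linking coordinates by the endpoint permutation, read off $\lambda(bpb^{-1})=v_*\lambda(p)$ from $v(A_i)=B_{\bar\beta(i)}$, and get well-definedness of the $V$-action because inner $P$-conjugation is trivial on $P_{\mathrm{ab}}$. The only cosmetic difference is in the finite-stage identity: the paper counts signed crossings $C_{ij}$ along the traced strands and cancels the contributions of $\beta$ and $\beta^{-1}$, whereas you delete down to two strands and use that $B_2\cong\mathbb Z$ is abelian. These are the same computation, since $C_{ij}$ is exactly the exponent of that two-strand braid.
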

\begin{proof}
We first compute at one finite braid stage. Let $a\in B_n$, let
$\rho\in\mathfrak S_n$ be its endpoint permutation, and let
$q\in\PB_n$. For distinct input labels $i,j$, write $C_{ij}(a)$ for
the algebraic number of crossings between the strands starting at
$i$ and $j$: each positive crossing contributes $1$ and each negative
crossing contributes $-1$. These coordinates are indexed by unordered
pairs, so $C_{ij}=C_{ji}$. For a pure braid,
$C_{ij}(q)=2\ell_{ij}(q)$.

In the product $aqa^{-1}$, trace the strands starting at
$\rho(i),\rho(j)$ from right to left, in our composition convention.
They pass through $a^{-1}$ to positions $i,j$, remain at those
positions after the pure braid $q$, and return through $a$ to
$\rho(i),\rho(j)$. Their crossings therefore satisfy
\begin{align}
 C_{\rho(i),\rho(j)}(aqa^{-1})
  &=C_{\rho(i),\rho(j)}(a^{-1})+C_{ij}(q)+C_{ij}(a),
       \label{eq:crossing-conjugation}\\
 C_{\rho(i),\rho(j)}(a^{-1})&=-C_{ij}(a).
       \label{eq:crossing-inverse}
\end{align}
The second equality holds because reversing the braid reverses every
crossing of the corresponding two strands. The outer contributions
cancel. Since $aqa^{-1}$ is pure, dividing the remaining equality by
$2$ gives
\begin{equation}\label{eq:finite-linking-conjugation}
 \ell_{\rho(i),\rho(j)}(aqa^{-1})=\ell_{ij}(q).
\end{equation}
Thus conjugation permutes linking coordinates by the endpoint
permutation.

For the tree diagrams, pass to common expansions so that the pure
braid for $p$ is on the source leaf partition
$A_1,\ldots,A_n$ of $v$, and write $D_1,\ldots,D_n$ for the target
leaf partition. If $a$ is the braid for $b$, then
$v(A_i)=D_{\rho(i)}$, and the pure braid in the conjugate diagram is
$aqa^{-1}$. Equation~\eqref{eq:finite-linking-conjugation} and
the definition of $\lambda$ give
\begin{align*}
 \lambda(bpb^{-1})
  &=\sum_{i<j}\ell_{ij}(q)
                       e_{D_{\rho(i)},D_{\rho(j)}}\\
  &=\sum_{i<j}\ell_{ij}(q)e_{v(A_i),v(A_j)}
    =v_*\lambda(p).
\end{align*}
The symmetry $e_{A,B}=e_{B,A}$ removes any ordering issue when
$\rho(i)>\rho(j)$. For points in one source leaf cylinder the
linking function is zero; both of their images lie in one target
leaf cylinder, where the conjugate linking function is also zero.
If the diagrams require further expansion, each old linking
coordinate is copied to its descendant pairs and the mutual linking
of two parallel siblings is zero. Thus the computation is unchanged
by refinement and applies to every pair $x\ne y$.

Finally, conjugation by $b$ gives an automorphism of $P_{\mathrm{ab}}$.
If $r\in P$, then
\[
 (rpr^{-1})p^{-1}=[r,p]\in[P,P],
\]
so conjugation by $r$ is trivial on this abelianization. Two lifts of
$v$ differ by an element of $P$, and hence give the same map. We can
therefore define
\[
 v\cdot\bigl(p[P,P]\bigr)=(bpb^{-1})[P,P]
\]
independently of the chosen lift. The equality just computed states
$\Lambda(v\cdot[p])=v_*\Lambda([p])$, where $[p]=p[P,P]$.
Together with Proposition~\ref{prop:linking-module}, this proves the
$\mathbb ZV$-module isomorphism.
\end{proof}

\begin{ex}
Choose one tree with leaf cylinders $A_1,A_2,A_3$ and let
$p$ be represented there by $A_{13}^{\,2}A_{23}^{\,-3}$.
Its linking coordinates are
$(\ell_{12},\ell_{13},\ell_{23})=(0,2,-3)$, so
\[
 \lambda(p)=2e_{A_1,A_3}-3e_{A_2,A_3}.
\]
Let $b$ be represented on that tree by $\sigma_1$, so its image $v$
interchanges $A_1,A_2$ and fixes $A_3$. The lemma gives
\[
 \lambda(bpb^{-1})=2e_{A_2,A_3}-3e_{A_1,A_3},
\]
and the new coordinate vector is $(0,-3,2)$. The crossings of
$\sigma_1$ and $\sigma_1^{-1}$ cancel in the conjugate; they do not
add a linking number to the pure braid.
\end{ex}

Now, we can identify $P_{ab}$ with $M$ as a $\mathbb ZV$-module. The benefit of this is that the ring of endomorphisms of $M$ has a very clear description as follows:

\begin{lem}\label{lem:scalar-commutant}
Every $V$-equivariant endomorphism of the additive group $M$ is
multiplication by a single integer. Equivalently, the map
\[
 \mathbb Z\longrightarrow\operatorname{End}_{\mathbb ZV}(M),
 \qquad a\longmapsto(f\mapsto af)
\]
is a ring isomorphism, where addition of endomorphisms is pointwise and
their multiplication is composition.
\end{lem}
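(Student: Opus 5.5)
The plan is to determine $L$ on the generators $e_{A,B}$ of Lemma~\ref{lem:pair-functions}. By that lemma it suffices to show that there is an integer $a$ with $L(e_{A,B})=a\,e_{A,B}$ for every pair of disjoint nonempty clopen sets $A,B$.

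\textbf{Step 1: the support of $L(e_{A,B})$.} Fix disjoint nonempty clopen $A,B$ and put $f=L(e_{A,B})$. For any clopen $W$ disjoint from $A\cup B$, every $v\in V_W$ fixes $e_{A,B}$, by \eqref{eq:pair-pushforward-rules}. Equivariance then gives $v_*f=f$ for all $v\in V_W$, and the same holds for $W$ any clopen subset of $A$, or of $B$, or of $(A\cup B)^c$. Write $f$ over a cylinder partition refining $\{A,B,(A\cup B)^c\}$ as in Lemma~\ref{lem:pair-functions}. Invariance under the local groups then forces $f$ to be constant on each of the six blocks $Y_{X}$, $Y_{X,X'}$ with $X,X'\in\{A,B,C\}$, where $C=(A\cup B)^c$. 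The mechanism is that $V_A$ acts with dense orbits on unordered pairs in $A$ and on $A$ itself (Lemma~\ref{lem:local-density}), and a locally constant function invariant under a dense-orbit action is constant on that region.

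The noncompactness in Lemma~\ref{lem:pair-space-topology} now kills the same-side blocks: a nonzero constant on the noncompact clopen $Y_A$ would contradict compact support. The blocks $Y_{A,C}$ and $Y_{B,C}$ need more care, since they are compact when $C\neq\varnothing$. To kill them, I would use an element of $V$ that fixes $e_{A,B}$ but moves these blocks. Split $A=A'\sqcup A''$, and pick $v$ supported in $A\cup C$ that maps $A''$ onto a piece of $C$ and a piece of $C$ onto $A''$, while fixing $A'$ and $B$. Unfortunately this does not fix $e_{A,B}$. The cleaner route is to use additivity instead: $e_{A,B}=e_{A',B}+e_{A'',B}$. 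Each summand satisfies the same invariance with the roles of the pieces changed. Comparing the value of $f$ on $Y_{A'',C}$ computed through $L(e_{A',B})$, where $A''$ lies in the complement, and through $L(e_{A'',B})$ pins these constants down. I expect this bookkeeping to show that $f=a_{A,B}e_{A,B}$ plus possibly a multiple of $c_A$ and $c_B$ type terms, after which one more splitting argument removes the cut terms.

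\textbf{Step 2: the scalar is independent of $(A,B)$.} By Lemma~\ref{lem:partition-matching} there is $v\in V$ with $v(A)=A_0$ and $v(B)=B_0$ whenever $A\sqcup B$ and $A_0\sqcup B_0$ are both proper or both equal to $\Cantor$. Equivariance gives $a_{A,B}=a_{v(A),v(B)}$. The case $A\sqcup B=\Cantor$ is linked to the proper case by additivity: split $A$ and compare. So one integer $a$ works for every pair, and hence $L=a\cdot$ on $M$.

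\textbf{Step 3: the ring isomorphism.} Injectivity of $\mathbb Z\to\operatorname{End}_{\mathbb ZV}(M)$ holds because $M$ is torsion-free and nonzero. The map clearly respects sums and composition.

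The main obstacle is Step 1, specifically ruling out mass of $L(e_{A,B})$ on the compact cross blocks $Y_{A,C}$, $Y_{B,C}$. Invariance alone does not kill them, so I would rely on the splitting-additivity comparison described above. If needed, I would first treat the case $A\sqcup B=\Cantor$, where $C$ is empty and Step 1 is immediate, obtaining $L(c_A)=a\,c_A$. Then I would express a general $e_{A,B}$ through cut functions via $2e_{A,B}=c_A+c_B-c_{A\cup B}$, which gives $2L(e_{A,B})=2a\,e_{A,B}$. Since $M$ is torsion-free, this yields $L(e_{A,B})=a\,e_{A,B}$ directly and bypasses the hard bookkeeping.
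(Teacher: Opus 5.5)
Your last-paragraph fallback is exactly the paper's proof. Invariance of $L(c_A)$ under $V_AV_{A^c}$, dense orbits, and noncompactness of $Y_A,Y_{A^c}$ give $L(c_A)=a_Ac_A$; transitivity of $V$ on nonempty proper clopen sets makes $a_A$ constant; and $2e_{A,B}=c_A+c_B-c_{A\cup B}$, with torsion-freeness and generation, finishes. The splitting bookkeeping in your main Step~1 is only anticipated, not carried out (it can be completed but is unnecessary), so present the cut-function route as the proof itself rather than as a fallback.
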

\begin{proof}
Let $L:M\to M$ be additive and satisfy $Lv_*=v_*L$ for every $v\in V$.
Fix a nonempty proper clopen set $A\subset\Cantor$, and consider
\[
 H_A=V_A V_{A^c}\cong V_A\times V_{A^c}.
\]
The factors commute and have trivial intersection because their supports
are disjoint. The cut function $c_A$ is invariant under $H_A$.
Equivariance therefore makes $L(c_A)$ invariant under $H_A$ as well.

There is a partition into three clopen regions
\[
 Y=Y_A\sqcup Y_{A,A^c}\sqcup Y_{A^c}.
\]
Every $H_A$-orbit is dense in the region containing it. On $Y_A$ this
is the unordered-pair assertion for $V_A$ in
Lemma~\ref{lem:local-density}; on $Y_{A^c}$ it is the corresponding
assertion for $V_{A^c}$. On $Y_{A,A^c}$, identify a pair with its unique
ordered representative in $A\times A^c$. The two factors act
independently on the two coordinates. A basic open set is a product
of a nonempty open subset of $A$ and a nonempty open subset of $A^c$,
and the one-point density assertion in Lemma~\ref{lem:local-density}
moves each coordinate into its prescribed open set.

An invariant continuous integer-valued function is constant on each
such region. Indeed, fix a point in the region. The fiber of its
function value is closed and contains its dense orbit, so it contains
the entire region. Thus $L(c_A)$ takes a constant value on each of
$Y_A$, $Y_{A,A^c}$ and $Y_{A^c}$.

By Lemma~\ref{lem:pair-space-topology}, $Y_A$ and $Y_{A^c}$ are closed
noncompact subsets of $Y$. If the constant value of $L(c_A)$ on either
one were nonzero, its compact support would contain that whole region.
The region would then be a closed subset of a compact space, and hence
compact, a contradiction. The only possibly nonzero value is on
$Y_{A,A^c}$. There is consequently an integer $a_A$ such that
\begin{equation}\label{eq:scalar-on-cuts}
 L(c_A)=a_Ac_A.
\end{equation}

Let $B$ be any other nonempty proper clopen set. By
Lemma~\ref{lem:partition-matching}, choose $v\in V$ with $v(A)=B$.
Equation~\eqref{eq:pair-pushforward-rules} and equivariance give
\[
 a_Bc_B=L(c_B)=L(v_*c_A)=v_*L(c_A)=a_Ac_B.
\]
Since $c_B$ takes the value $1$, we obtain $a_A=a_B$. Denote this
common integer by $a$. The equality $L(c_A)=ac_A$ also holds for
$A=\varnothing,\Cantor$, because those cut functions are zero.

For disjoint clopen sets $A,B$ the following pointwise identity holds:
\begin{equation}\label{eq:cut-identity}
 2e_{A,B}=c_A+c_B-c_{A\cup B}.
\end{equation}
For completeness, put $C=\Cantor\setminus(A\cup B)$. On a pair
crossing $A$ and $B$, the right side is $1+1-0=2$. On a pair crossing
$A$ and $C$, or $B$ and $C$, it is $1+0-1=0$, or $0+1-1=0$,
respectively. On pairs contained in any one of $A,B,C$, all three
cut functions vanish. These possibilities exhaust all pairs.
The same calculation includes $C=\varnothing$.

Apply $L$ to \eqref{eq:cut-identity} and use its value on cut functions:
\[
 2L(e_{A,B})=a(c_A+c_B-c_{A\cup B})=2ae_{A,B}.
\]
The additive group $M$ is torsion-free: if $nf=0$ for a nonzero
integer $n$, then $nf(\eta)=0$ in $\mathbb Z$ for every $\eta\in Y$,
and hence $f=0$. We may therefore cancel the factor $2$ and conclude
that $L(e_{A,B})=ae_{A,B}$. These indicators generate $M$ by
Lemma~\ref{lem:pair-functions}, so $L=a\,\mathrm{id}_M$.
Conversely, every integer scalar commutes with the $V$-action.
Distinct scalars give distinct maps since a nonzero rectangle indicator
takes the value $1$. This also proves the stated ring isomorphism.
\end{proof}

Let $\Phi: BV\twoheadrightarrow BV$ be a surjective endomorphism, let
$\beta=\Phi|_P:P\twoheadrightarrow P$, and let $\alpha\in\Aut(V)$
be the quotient automorphism supplied by Lemma~\ref{lem:braided-epi-reduction}.
There is $h\in\Homeo(\Cantor)$ satisfying
$\alpha(v)=hvh^{-1}$ for every $v\in V$.

\begin{prop}\label{prop:signed-spatial}
 For any such $h$, there is
$\varepsilon\in\{1,-1\}$ such that
\begin{equation}\label{eq:signed-spatial-action}
 \Lambda\circ\beta_{\mathrm{ab}}\circ\Lambda^{-1}
       =\varepsilon h_*.
\end{equation}
Here $\beta_{\mathrm{ab}}:P/[P,P]\to P/[P,P]$ is the induced map
$p[P,P]\mapsto\beta(p)[P,P]$. In the identification
$P_{\mathrm{ab}}=M$, formula \eqref{eq:signed-spatial-action} is written
$\beta_{\mathrm{ab}}=\varepsilon h_*$. Equivalently, for every $p\in P$
and every $x\ne y$ in $\Cantor$,
\begin{equation}\label{eq:signed-linking-pointwise}
 \lambda(\beta(p))(\{x,y\})
  =\varepsilon\lambda(p)
           (\{h^{-1}(x),h^{-1}(y)\}).
\end{equation}
\end{prop}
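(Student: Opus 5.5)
Write $L=\Lambda\circ\beta_{\mathrm{ab}}\circ\Lambda^{-1}:M\to M$. The plan is to show that $L':=(h^{-1})_*\circ L$ commutes with every $v_*$ for $v\in V$. Lemma~\ref{lem:scalar-commutant} then makes $L'$ an integer scalar $a$. Surjectivity of $\beta$ will force $a=\pm1$.

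\emph{Step 1: $\beta_{\mathrm{ab}}$ is twisted-equivariant.} Let $b\in BV$, $p\in P$, and $v=\pi(b)$. Since $\beta=\Phi|_P$,
\[
\beta(bpb^{-1})=\Phi(b)\beta(p)\Phi(b)^{-1}.
\]
Lemma~\ref{lem:braided-epi-reduction} gives $\pi(\Phi(b))=\alpha(v)=hvh^{-1}$. Applying $\lambda$ and Lemma~\ref{lem:linking-conjugation} to both sides yields
\[
L(v_*f)=(hvh^{-1})_*L(f)\qquad(f=\lambda(p)).
\]
Here we use that $\lambda$ is surjective, and that $(hvh^{-1})_*$ makes sense as a pushforward by an element of $V$, since $\alpha(v)\in V$. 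By \eqref{eq:pair-pushforward-rules}, $(hvh^{-1})_*=h_*v_*h^{-1}_*$. This requires $h_*$ to be defined on $M$ for an arbitrary homeomorphism $h$, which Definition~\ref{def:pair-pushforward} provides. Hence
\[
(h^{-1})_*L\,v_*=v_*\,(h^{-1})_*L.
\]
So $L'$ is an additive $V$-equivariant endomorphism of $M$, and Lemma~\ref{lem:scalar-commutant} gives an integer $a$ with $L=a\,h_*$.

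\emph{Step 2: $a=\pm1$.} Since $\beta$ is surjective, $\beta_{\mathrm{ab}}$ is surjective, so $L$ is surjective. As $h_*$ is an automorphism of $M$, multiplication by $a$ must then be surjective on $M$. Take $f=e_{A,B}$ for disjoint nonempty clopen $A,B$. If $ag=e_{A,B}$, then evaluating at a point of $Y_{A,B}$ gives $a\,g(\eta)=1$ in $\mathbb Z$. Hence $a=\pm1$; set $\varepsilon=a$. The pointwise formula \eqref{eq:signed-linking-pointwise} then follows by unwinding \eqref{eq:spatial-pushforward} with $f=\lambda(p)$ and using $\beta_{\mathrm{ab}}([p])=[\beta(p)]$.

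\emph{Main obstacle.} The delicate point is Step 1's bookkeeping. The conjugation action on $P_{\mathrm{ab}}$ must be expressed through $\pi(\Phi(b))$, not through $\pi(b)$. We also need the statement "for any such $h$", even though $h$ is only determined by $\alpha$. That is harmless: the argument uses only the identity $\alpha(v)=hvh^{-1}$ for all $v$, so it works for any such $h$, and Rubin's theorem (Theorem~\ref{thm:rubin}) guarantees that one exists. Finally, we must check that $h_*$ preserves $M$, which holds because homeomorphisms preserve compact support and local constancy.
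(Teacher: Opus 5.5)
Your proposal is correct and follows the paper's proof step for step. You derive the twisted equivariance $L\,v_*=\alpha(v)_*L$ from $\pi(\Phi(b))=\alpha(v)$ and Lemma~\ref{lem:linking-conjugation}, untwist by $h_*^{-1}$ to get a $V$-equivariant map, apply Lemma~\ref{lem:scalar-commutant}, and then use surjectivity, evaluated on $e_{A,B}$, to force $a=\pm1$.
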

\begin{proof}

Transport the abelianized map to $M$ by setting
\[
 \overline\beta=\Lambda\circ\beta_{\mathrm{ab}}\circ\Lambda^{-1}.
\]
This additive map is surjective because $\beta$ is surjective. We
compare its equivariance with that of $h_*$. For $v\in V$, choose a
lift $b\in BV$ with $\pi(b)=v$. For $p\in P$, the homomorphism
property of $\Phi$ gives
\[
 \beta(bpb^{-1})=\Phi(b)\beta(p)\Phi(b)^{-1}.
\]
Moreover, the definition of the quotient map gives
$\pi(\Phi(b))=\alpha(v)$. Apply $\lambda$ and use
Lemma~\ref{lem:linking-conjugation} on both sides. Since $\lambda$ is
surjective, the resulting identity is
\begin{equation}\label{eq:beta-twisted-equivariance}
 \overline\beta\,v_* =\alpha(v)_*\,\overline\beta
 \qquad(v\in V).
\end{equation}

Independently, the composition rule for pushforwards and
$\alpha(v)=hvh^{-1}$ give
\[
 \alpha(v)_*=h_*v_*h_*^{-1},\qquad
 h_*v_* =\alpha(v)_*h_*.
\]
Define the additive endomorphism
$L=h_*^{-1}\overline\beta$ of $M$. Combining the preceding identities
with \eqref{eq:beta-twisted-equivariance}, we obtain
\[
 Lv_*=h_*^{-1}\overline\beta\,v_*
     =h_*^{-1}\alpha(v)_*\overline\beta
     =v_*h_*^{-1}\overline\beta=v_*L.
\]
Thus $L$ is a $\mathbb ZV$-module endomorphism. The scalar commutant
calculation, Lemma~\ref{lem:scalar-commutant}, gives
$L=a\,\mathrm{id}_M$ for some $a\in\mathbb Z$. Therefore
\[
 \overline\beta=ah_*.
\]

Since both $\overline\beta$ and $h_*^{-1}$ are surjective, so is
$L=a\,\mathrm{id}_M$. Choose nonempty disjoint clopen sets $A,B$.
Surjectivity supplies $f\in M$ with $af=e_{A,B}$. At any point
$\eta\in Y_{A,B}$, evaluation gives $af(\eta)=1$. Both factors are
integers, so $a=1$ or $a=-1$. Set $\varepsilon=a$; this proves
\eqref{eq:signed-spatial-action}. Applying that equality to
$\Lambda(p[P,P])=\lambda(p)$ yields
\eqref{eq:signed-linking-pointwise}.
\end{proof}

\subsection{Application of the graded injectivity criterion}
\label{subsec:bv-graded-application}

We now apply Section~\ref{sec:graded-preliminaries} to the pure braid
kernel $P$. Lemma~\ref{lem:p-residual} proves that $P$ is residually
nilpotent. To use Theorem~\ref{thm:graded-detection}, it remains to
prove injectivity in every graded degree. The following
finite-realization lemma lets us control each graded piece by an
action of $V$ agreeing with the spatial homeomorphism on the finitely
many degree-one entries involved.

\begin{lem}\label{lem:finite-realization}
Let $h\in\Homeo(\Cantor)$ and $f_1,\ldots,f_r\in M$. There exists $v\in V$
such that $v_*f_i=h_*f_i$ for every $i$.
\end{lem}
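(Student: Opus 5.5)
By Lemma~\ref{lem:pair-functions}, I will first choose a single finite cylinder partition $A_1,\ldots,A_n$ of $\Cantor$ such that every $f_i$ is a finite integral combination
\[
 f_i=\sum_{1\le j<k\le n}a_{i,jk}\,e_{A_j,A_k}.
\]
Both $h_*$ and $v_*$ are additive, and by \eqref{eq:pair-pushforward-rules} they satisfy $h_*e_{A,B}=e_{h(A),h(B)}$ and $v_*e_{A,B}=e_{v(A),v(B)}$. It therefore suffices to find one $v\in V$ with $v(A_j)=h(A_j)$ for every $j=1,\ldots,n$. Such a $v$ gives $v_*e_{A_j,A_k}=h_*e_{A_j,A_k}$ for every pair $j<k$, and summing yields $v_*f_i=h_*f_i$ for every $i$.

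To produce $v$, I observe that each $A_j$ is clopen and nonempty and $h$ is a homeomorphism. So $h(A_1),\ldots,h(A_n)$ is again a partition of $\Cantor$ into nonempty clopen subsets. The only difference is that these sets need not be cylinders, but Lemma~\ref{lem:partition-matching} is stated for arbitrary nonempty clopen partitions. That lemma supplies $v\in V$ with $v(A_j)=h(A_j)$ for all $j$. Behind it is the fact that every clopen set is a finite union of cylinders, so one can refine both partitions into cylinders with equal counts in each matched part (splitting cylinders as needed) and match cylinders by prefix replacements.

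I expect no serious obstacle. The one point to check is that a single partition can serve all the $f_i$ simultaneously, and Lemma~\ref{lem:partition-matching} is stated for finite families precisely to give this. The other is that pushforward depends only on images of sets: the function $v_*e_{A,B}$ is determined by the pair $v(A),v(B)$ and not by how $v$ acts pointwise inside them. Both follow directly from the cited lemmas, so the proof is a short combination of Lemma~\ref{lem:pair-functions}, Lemma~\ref{lem:partition-matching}, and \eqref{eq:pair-pushforward-rules}.
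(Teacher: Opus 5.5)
Your proof is correct and essentially identical to the paper's. Both take a common partition from Lemma~\ref{lem:pair-functions}, a $v\in V$ with $v(A_j)=h(A_j)$ from Lemma~\ref{lem:partition-matching}, and the rule $h_*e_{A,B}=e_{h(A),h(B)}$ applied to both pushforwards. One small slip in your closing remarks: the lemma that handles the finite family $f_1,\ldots,f_r$ simultaneously is Lemma~\ref{lem:pair-functions}, not Lemma~\ref{lem:partition-matching}, which is how you correctly used it in the body of the argument.
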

\begin{proof}
Choose a partition $A_1,\ldots,A_n$ as in
Lemma~\ref{lem:pair-functions} for all $f_i$. By
Lemma~\ref{lem:partition-matching}, choose $v\in V$ with
$v(A_j)=h(A_j)$ for every $j$. Each $f_i$ is a linear combination of
$e_{A_j,A_k}$, and both pushforwards send this indicator to
$e_{h(A_j),h(A_k)}$. Hence the pushforwards agree on all $f_i$.
\end{proof}

\begin{thm}\label{thm:bv-hopfian}
The braided Thompson group $ BV$ is Hopfian.
\end{thm}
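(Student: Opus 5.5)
Let $\Phi:BV\twoheadrightarrow BV$ be surjective. By Lemma~\ref{lem:braided-epi-reduction} it suffices to show that $\beta=\Phi|_P$ is injective. Since $P$ is residually nilpotent (Lemma~\ref{lem:p-residual}), Theorem~\ref{thm:graded-detection} reduces this to showing that $\gr_k(\beta)$ is injective for every $k\ge1$.

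Let $h$ realize $\alpha$ as in Theorem~\ref{thm:rubin}. Proposition~\ref{prop:signed-spatial} gives $\gr_1(\beta)=\beta_{\mathrm{ab}}=\varepsilon h_*$ under $\Lambda$. This map is injective because $h_*$ is an automorphism of $M$, which settles degree one.

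For higher degree, fix $z\in\gr_k(P)$ with $\gr_k(\beta)(z)=0$. By Lemma~\ref{lem:degree-one}, write $z=F(m_1,\ldots,m_r)$ with $F$ a homogeneous Lie polynomial of degree $k$ and $m_i\in\gr_1(P)$, identified with $f_i=\Lambda(m_i)\in M$. Lemma~\ref{lem:finite-realization} gives $v\in V$ with $v_*f_i=h_*f_i$ for all $i$. Choose a lift $b\in BV$ of $v$, and let $c_b$ be conjugation by $b$ on $P$. This is an automorphism of $P$, so $\gr(c_b)$ is a graded Lie ring automorphism by Proposition~\ref{prop:induced-graded-map}. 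By Lemma~\ref{lem:linking-conjugation}, its degree-one part is $v_*$. Since $\gr(\beta)$ and $\gr(c_b)$ are both Lie ring homomorphisms, evaluation of $F$ commutes with them. Using \eqref{eq:homogeneous-scaling}, we get
\[
 \gr_k(\beta)(z)=F(\varepsilon h_*m_1,\ldots,\varepsilon h_*m_r)
 =\varepsilon^kF(v_*m_1,\ldots,v_*m_r)=\varepsilon^k\gr_k(c_b)(z).
\]
Since $\gr_k(c_b)$ is injective and $\varepsilon^k=\pm1$, it follows that $z=0$. Hence each $\gr_k(\beta)$ is injective, so $\beta$ is injective and $\Phi$ is injective. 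This proves the theorem.

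The main delicate point is the middle equality, which has two parts. First, the expression $z=F(m_1,\ldots,m_r)$ is not unique, but the argument uses only one fixed expression and the fact that Lie homomorphisms preserve evaluation of $F$, so non-uniqueness does no harm. Second, $v$ depends on $z$, but this is harmless: for each $z$ we obtain an injective map $\gr_k(c_b)$ that agrees with $\varepsilon^{-k}\gr_k(\beta)$ at $z$, and that suffices to force $z=0$.
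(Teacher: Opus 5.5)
Your proposal is correct and follows the paper's proof essentially step for step: you reduce to $\beta=\Phi|_P$ via Lemma~\ref{lem:braided-epi-reduction}, use residual nilpotence with graded detection, take $\gr_1(\beta)=\varepsilon h_*$ from Proposition~\ref{prop:signed-spatial}, and replace $h_*$ by $v_*$ on the finitely many degree-one inputs (Lemma~\ref{lem:finite-realization}) to obtain $\gr_k(\beta)(z)=\varepsilon^k\gr_k(c_b)(z)$. The only differences are cosmetic: you test a kernel element instead of a nonzero class, and your separate degree-one check is redundant because the general computation already covers $k=1$.
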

\begin{proof}
Let $\Phi: BV\twoheadrightarrow BV$. By
Lemma~\ref{lem:braided-epi-reduction}, it is enough to prove injectivity of
$\beta=\Phi|_P$. Proposition~\ref{prop:signed-spatial} gives
\[
 \gr_1(\beta)=\beta_{\mathrm{ab}}=\varepsilon h_*,\qquad\varepsilon=\pm1.
\]
Fix $k\ge1$ and a nonzero $z\in\gr_k(P)$. By
Lemma~\ref{lem:degree-one}, write
\[
 z=F(m_1,\ldots,m_r),\qquad m_i\in M,
\]
where $F$ is a finite integral sum of iterated Lie brackets, all of length
$k$. Choose $v\in V$ with $v_*m_i=h_*m_i$ for every $i$, using
Lemma~\ref{lem:finite-realization}. Lift $v$ to $b\in BV$.
Since $P$ is normal, ambient conjugation restricts to the automorphism
$c_b:P\to P$, $c_b(p)=bpb^{-1}$.
Proposition~\ref{prop:induced-graded-map} gives the graded Lie
automorphism $C_b=\gr(c_b)$ of $\gr(P)$; its inverse is
$\gr(c_{b^{-1}})$. By Lemma~\ref{lem:linking-conjugation},
its degree-one action is $v_*$. 
Functoriality of the Lie bracket and the degree-$k$ homogeneity
formula \eqref{eq:homogeneous-scaling} give
\begin{align*}
 \gr_k(\beta)(z)
 &=F(\varepsilon h_*m_1,\ldots,\varepsilon h_*m_r)\\
 &=\varepsilon^kF(v_*m_1,\ldots,v_*m_r)
  =\varepsilon^k C_b(z)\neq0.
\end{align*}
The last expression is nonzero because $C_b$ is injective and
multiplication by $\varepsilon^k\in\{1,-1\}$ is an automorphism of
the abelian group $\gr_k(P)$. The element $v$ may depend on $z$; this causes no problem, since this
calculation proves injectivity on each individual nonzero graded class.
Thus every $\gr_k(\beta)$ is injective. By
Lemma~\ref{lem:p-residual} and Theorem~\ref{thm:graded-detection},
$\beta$ is injective. Therefore $\Phi$ is injective as well.
\end{proof}

\begin{rem}
The proof requires neither a splitting of \eqref{eq:bv-extension} nor an
automorphism of $ BV$ lifting $h$. It also does not require the preservation
of the kernels of all individual strand-deletion maps. The finite action of
$h$ on a collection of linking-number functions is sufficient.
\end{rem}

\section{Hopficity of the braided Thompson group \texorpdfstring{$BF$}{BF}}
\label{sec:bf-hopficity}

The goal of this section is to prove that $BF$ is Hopfian. We use the same strategy for $BV$. The main difference is that in this case $M$ is a $\mathbb Z F$-module, so the ring of endomorphisms of $M$ has different structure.

\subsection{Finite ordered partitions of the linking-number module}

Retain the space $Y$ of unordered pairs of distinct Cantor points
and the abelian group $M=C_c(Y,\mathbb Z)$ from
Section~\ref{sec:bv-hopficity}. Under Proposition~\ref{prop:linking-module},
we identify $P_{\mathrm{ab}}$ with $M$. Restricting the conjugation
action from $BV$ to $BF$ makes it a $\mathbb ZF$-module. Explicitly,
$f_*m(\{x,y\})=m(\{f^{-1}x,f^{-1}y\})$. A
$\mathbb ZF$-endomorphism means an additive map commuting with
every such $f_*$; write $\operatorname{End}_{\mathbb ZF}(M)$ for
the ring of these maps, with composition as multiplication.

Let us recall the existence of a dense orbit of the local group of $F$. In particular, we have the following Lemma, one can find the proof in \cite{CannonFloydParry1996}.

\begin{lem}\label{lem:bf-local-density}
Suppose $U$ is a cylinder. Ordered partitions of $U$ into the same
number of nonempty convex clopen subsets can be matched by an
element of $F$ supported in $U$.
The local subgroup
\[
 F_U=\{f\in F:f|_{\Cantor\setminus U}=\operatorname{id}\}
\]
has a dense orbit on $U$ and a dense orbit on the unordered pairs
of distinct points of $U$.
\end{lem}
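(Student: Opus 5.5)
The lemma has two parts, a matching statement and a density statement. I will derive the density statement from the matching statement, working inside the Cantor model of $F$ from Definition~\ref{def:cantor-f}.

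\textbf{Matching.} Let $U=[u]$ be a cylinder, and let $C_1<\cdots<C_n$ and $D_1<\cdots<D_n$ be ordered partitions of $U$ into nonempty convex clopen sets.

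First, reduce to cylinders. A convex clopen subset of $\Cantor$ is a finite union of cylinders by compactness. Listing those cylinders in lexicographic order refines each $C_i$ into consecutive cylinders, and likewise for each $D_i$.

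Second, equalize the counts. Splitting one cylinder $[w]$ into $[w0]<[w1]$ increases the count by one and keeps convexity and order. So we can make $C_i$ and $D_i$ consist of the same number $m_i$ of consecutive cylinders, for each $i$.

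Third, build the element $f$. The cylinders of $C_1,\dots,C_n$ together form an ordered cylinder partition of $U$, and similarly for the $D_i$. Prepend $u$-independent data outside $U$: complete both partitions to ordered cylinder partitions of $\Cantor$ using the same cylinders covering $\Cantor\setminus U$. The complement of a cylinder is a finite union of cylinders, namely the siblings along the path to $u$. Then define $f$ by matching the $k$-th cylinder of one list with the $k$-th of the other, by prefix replacement. This is an order-preserving prefix replacement, so $f\in F$. It is the identity off $U$ and sends $C_i$ onto $D_i$.

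\textbf{Density on $U$.} It suffices to show that for any $x\in U$ and any cylinder $[w]\subseteq U$ there is $f\in F_U$ with $f(x)\in[w]$.

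Choose a cylinder $[z]\ni x$ inside $U$, with $[z]\ne U$. Pick a partition of $U$ into three or more convex pieces in which $[z]$ is one piece. Pick a second partition with the same number of pieces and the same position index, in which the corresponding piece is $[w']\subseteq[w]$. Such a partition exists: the complement of $[w']$ in $U$ splits into a left and a right convex clopen part, and these can be split further to adjust the counts on each side. One caveat is that one side may be empty, for example when $w'$ is the leftmost cylinder in $U$. To avoid this, shrink $[z]$ and $[w']$ to interior cylinders, for instance by appending $01$, so that both sides are nonempty.

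The matching statement then gives $f\in F_U$ with $f([z])=[w']$. Hence every orbit is dense, in particular a dense orbit exists.

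\textbf{Pairs.} The same argument applies to unordered pairs. Given distinct $x,y\in U$ and target disjoint cylinders $[w_1],[w_2]\subseteq U$, order them so that $x<y$ and $[w_1]<[w_2]$. Swapping the targets is allowed since the pairs are unordered. Choose disjoint small cylinders $[z_1]\ni x$ and $[z_2]\ni y$ with nonempty gaps around them. Shrink $[w_1],[w_2]$ similarly so that there are nonempty gaps before, between, and after them. Both configurations then give five-piece ordered convex partitions of $U$, of the form gap, cylinder, gap, cylinder, gap. The matching statement gives $f\in F_U$ sending $[z_1]\to[w_1']$ and $[z_2]\to[w_2']$.

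Basic open sets of the unordered pair space are images of products of such disjoint cylinders. So every orbit on unordered pairs is dense.

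\textbf{Main obstacle.} The main obstacle is bookkeeping rather than substance: arranging nonempty gap pieces so that the piece counts and positions match on both sides. Taking interior sub-cylinders handles this.
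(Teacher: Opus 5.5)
\textbf{The matching part is correct; the density part has a failing step and a false intermediate claim.}

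Your matching argument is correct. It also goes beyond the paper, which does not prove this lemma but only cites Cannon--Floyd--Parry. Refining to ordered cylinder partitions and padding both sides with the same sibling cylinders along the path to $u$ is a valid self-contained proof of the first assertion.

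The problem is your conclusion that \emph{every} orbit of $F_U$, on $U$ and on unordered pairs, is dense. This is false.
\begin{itemize}
\item Each $f\in F_U$ is an order-preserving bijection that is the identity on $\Cantor\setminus U$. So it maps $U$ onto itself and fixes its minimum $u0^\infty$ and maximum $u1^\infty$; these orbits are singletons.
\item $F_U$ sends a lexicographically adjacent pair $\{c01^\infty,c10^\infty\}$, with $c$ extending $u$, to another adjacent pair. No adjacent pair meets the open set $q([u00]\times[u11])$, because $[u01]\cup[u10]$ lies between its two factors. So these orbits are not dense either.
\item Pairs containing an endpoint of $U$ likewise have non-dense orbits.
\end{itemize}

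The step that fails is ``shrink $[z]$ \ldots\ by appending $01$''. The shrunken cylinder must still contain $x$. If $x=u0^\infty$, every cylinder around $x$ inside $U$ is the leftmost one, so no nonempty left gap can exist. In the pair argument, ``choose $[z_1]\ni x$, $[z_2]\ni y$ with nonempty gaps around them'' fails in the same way at endpoints. For adjacent pairs there is never a gap between the two cylinders.

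The lemma only asserts that \emph{some} orbit is dense. That is also all Lemma~\ref{lem:bf-stage-preservation} uses: a closed level set containing one dense orbit is the whole block. So the repair is to restrict to generic points.
\begin{itemize}
\item \emph{Points.} If $x=u\xi$ is not an endpoint of $U$, choose $z=u\xi_1\cdots\xi_k$ with $\xi_1\cdots\xi_k$ containing both digits. Then both gaps around $[z]$ are nonempty, and only the target $[w']$ needs shrinking, where appending $01$ is fine.
\item \emph{Pairs.} Take $x<y$ in $U$, neither an endpoint and not adjacent, and write $x=c0\ldots$, $y=c1\ldots$. Sufficiently long prefixes of $x$ and $y$ give cylinders $[z_1]\ni x$ and $[z_2]\ni y$ with nonempty gaps before, between and after them. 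Your five-piece matching then moves $\{x,y\}$ into any basic open set $q([w_1]\times[w_2])$.
\end{itemize}
With this restriction your argument proves exactly the stated lemma.
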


For a tree $T$ with ordered leaf cylinders $A_1<\cdots<A_n$, let
\begin{equation}\label{eq:bf-finite-module}
 M_T=\bigoplus_{1\le i<j\le n}\mathbb Z e_{A_i,A_j}\subseteq M.
\end{equation}
These are finite-rank free abelian groups, not $F$-submodules.
Every element of $M$ belongs to some $M_T$ by
Lemma~\ref{lem:pair-functions}. Splitting a leaf embeds $M_T$ into
the group for the refined tree by the linking-number cloning rule. We first focus on the endomorphism restricting on such subgroups.

\begin{lem}
\label{lem:bf-stage-preservation}
For every $\tau\in\operatorname{End}_{\mathbb ZF}(M)$ and every
tree $T$, one has $\tau(M_T)\subseteq M_T$.
Under the ordered basis in \eqref{eq:bf-finite-module}, the matrix
of $\tau|_{M_T}$ depends only on the number $n$ of leaves of $T$.
\end{lem}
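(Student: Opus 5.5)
The plan is to mimic the invariance argument of Lemma~\ref{lem:scalar-commutant}, with the local groups $F_U$ in place of $V_U$. The finite stage $M_T$ is characterized as the set of $m\in M$ that vanish on each $Y_{A_i}$ and are constant on each $Y_{A_i,A_j}$. Because $e_{A_i,A_j}$ is not invariant under anything large, I will work with invariance under the product
\[
 H_T=F_{A_1}F_{A_2}\cdots F_{A_n}\cong F_{A_1}\times\cdots\times F_{A_n},
\]
whose factors have disjoint supports and therefore commute. Every element of $M_T$ is $H_T$-invariant, since each factor preserves every leaf cylinder. Conversely, I claim that the $H_T$-invariant elements of $M$ are exactly $M_T$. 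By Lemma~\ref{lem:bf-local-density}, $F_{A_i}$ has a dense orbit on $A_i$ and on unordered pairs in $A_i$, so $H_T$ has a dense orbit on each region $Y_{A_i}$ and $Y_{A_i,A_j}$, which partition $Y$. Note that only a dense orbit is needed here, not density of every orbit. A continuous $\mathbb Z$-valued invariant function is constant on the closure of an orbit, so it is constant on each region. The noncompactness of $Y_{A_i}$ (Lemma~\ref{lem:pair-space-topology}) together with compact support then forces the value on $Y_{A_i}$ to be zero.

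Since $\tau$ commutes with every $f_*$ for $f\in H_T$, it maps $H_T$-invariants to $H_T$-invariants. This gives $\tau(M_T)\subseteq M_T$.

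For the second assertion, let $T$ and $T'$ both have $n$ leaves, with ordered leaf cylinders $A_1<\cdots<A_n$ and $A'_1<\cdots<A'_n$. By Lemma~\ref{lem:bf-local-density} applied with $U=\Cantor$, or simply by the tree-pair description of $F$, there is $f\in F$ with $f(A_i)=A'_i$ for every $i$. By \eqref{eq:pair-pushforward-rules} we have $f_*e_{A_i,A_j}=e_{A'_i,A'_j}$, so $f_*$ maps the ordered basis of $M_T$ bijectively to the ordered basis of $M_{T'}$, preserving indices. Write
\[
 \tau e_{A_i,A_j}=\sum_{k<l}c_{ij}^{kl}e_{A_k,A_l}.
\]
Applying $f_*$ and using equivariance gives
\[
 \tau e_{A'_i,A'_j}=\sum_{k<l}c^{kl}_{ij}e_{A'_k,A'_l},
\]
so the two matrices coincide. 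It is important here that $F$ preserves order: this is what guarantees that the index labels match without a permutation.

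I expect the main obstacle to be the invariance characterization, specifically making sure that $H_T$ has a dense orbit on each crossing region $Y_{A_i,A_j}$. I would identify $Y_{A_i,A_j}$ with $A_i\times A_j$ and use independent dense orbits of $F_{A_i}$ and $F_{A_j}$ in the two coordinates. The product of points with dense orbits has a dense orbit under the product group, because basic open sets are products of open sets.
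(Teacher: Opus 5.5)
Your proposal is correct and follows the paper's proof essentially step for step. Both arguments use invariance under $H_T=F_{A_1}\cdots F_{A_n}$ and dense orbits on the crossing blocks $Y_{A_i,A_j}$ and same-side blocks $Y_{A_i}$, then noncompactness of $Y_{A_i}$ to force the same-side constant to vanish, and finally an order-preserving $f\in F$ matching two $n$-leaf partitions to transport the ordered basis. Your product-of-dense-orbits justification for $Y_{A_i,A_j}\cong A_i\times A_j$ just spells out the detail behind the paper's appeal to Lemma~\ref{lem:bf-local-density}.
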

\begin{proof}
Let $H_T=F_{A_1}\cdots F_{A_n}$, the product of the commuting
local subgroups on the leaves of $T$. Each $e_{A_i,A_j}$ is fixed
by $H_T$. Therefore its image under $\tau$ is fixed by $H_T$. We consider the function in $M$ which are fixed by $H_T$. Let $m \in M$ such that $h_*m=m$ for all $h \in H_T$. 

On a crossing block $Y_{A_r,A_s}$, the product
$F_{A_r}\times F_{A_s}$ has a dense orbit, by the existence of a dense orbit of local group in Lemma~\ref{lem:bf-local-density}. Then $m$ as a $H_T$-invariant function is constant on that block: a level set containing
the dense orbit is closed and contains the whole block.

On the same-side block $Y_{A_r}$, the the existence of a dense orbit shows that an $H_T$-invariant function is constant. This constant must be zero, since $Y_{A_r}$ is noncompact by
Lemma~\ref{lem:pair-space-topology}, whereas an element of $M$ has compact support. More explicitly, $Y_{A_r}$ is closed in $Y$; if a nonzero constant occurred there, it would be a closed subset of a compact support and hence compact, a contradiction. 

These crossing and same-side blocks partition $Y$, proving that the $m$ belongs to $M_T$. Hence the image of $\tau$ belongs to $M_T$.

For any two $n$-leaf trees, their leaf partitions are matched in order by an element of $F$. Its action carries the indexed basis of one $M_T$ to that of the other. Commutation with this action gives the same matrix for $\tau$ in both ordered bases.
\end{proof}

\subsection{The four endomorphisms of the linking-number module}

The Lemma~\ref{lem:scalar-commutant} for $V$ cannot simply be reused for $F$.
We compute the larger endomorphism ring explicitly.

By Lemma~\ref{lem:bf-stage-preservation}, we can forget the tree $T$. For $n\ge2$, write
\[
 E_n=\bigoplus_{1\le i<j\le n}\mathbb Z e_{ij}
\]
for the abstract ordered array model of $M_T$, and write $f_{ij}$
for the coordinate of $f\in E_n$. Set $f_{ji}=f_{ij}$ and
$f_{ii}=0$. Write $I$ for the identity map of $M$, and let $I_n$
be the identity map of $E_n$. Define three additive
maps $L_n,R_n,J_n:E_n\to E_n$ by
\begin{align}
 (L_nf)_{ij}&=f_{1j}-f_{1i},\label{eq:bf-left-operator}\\
 (R_nf)_{ij}&=f_{in}-f_{jn},\label{eq:bf-right-operator}\\
 (J_nf)_{ij}&=\sum_{r=i}^{j-1}f_{r,r+1}
 \qquad(1\le i<j\le n).\label{eq:bf-adjacent-operator}
\end{align}
The subscripts $1,n$ record the first and last leaves, while the
sum in $J_n$ records consecutive pairs. At a one-leaf tree the
module is zero, and all these maps are the zero map.

Next, we extend three classes of endomorphisms of finite rank free Abelian group to endomorphisms of $M$. Let $T$ be a finite rooted binary tree with ordered leaves
$A_1<\cdots<A_n$, and let $T'$ be obtained by attaching a caret to the $k$-th leaf. Number the leaves of $T'$ as
$A_1<\cdots<A_{n+1}$. Such splitting induces the ordered cloning $\iota_{T,T'}\colon M_T\longrightarrow M_{T'}$
which is determined on the standard basis by
\[
\iota_{T,T'}(e_{A_i,A_j})=
\begin{cases}
e_{A_i,A_j},
    & i<j<k,\\[2mm]
e_{A_i,A_k}+e_{A_i,A_{k+1}},
    & i<j=k,\\[2mm]
e_{A_i,A_{j+1}},
    & i<k<j,\\[2mm]
e_{A_k,A_{j+1}}+e_{A_{k+1},A_{j+1}},
    & k=i<j,\\[2mm]
e_{A_{i+1},A_{j+1}},
    & k<i<j.
\end{cases}
\]
And $M=\varinjlim_T M_T$ is the direct limit whose transition maps are the cloning inclusions $\iota_{T,T'}$. So we show that three classes of endomorphisms can be extend to $M$ by showing that all such endomorphisms commute with ordered cloning inclusion.

\begin{lem} \label{lem:L,R,J-endomorphisms}
The maps $L_n,R_n,J_n$ commute with every ordered cloning inclusion.
They therefore define maps $L,R,J\in\operatorname{End}_{\mathbb ZF}(M)$.
Their common image is the subgroup $C\subseteq M$ whose finite-stage
arrays have the form
\begin{equation}\label{eq:bf-gradient-array}
 f_{ij}=u_j-u_i\quad(1\le i<j\le n),
 \qquad u_1,\ldots,u_n\in\mathbb Z.
\end{equation}
Each of $L,R,J$ restricts to the identity on $C$. 
\end{lem}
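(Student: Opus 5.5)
The plan is to work entirely in the coordinate model $E_n$ and reduce every assertion to an identity between arrays. The key observation is a coordinate form of the ordered cloning inclusion $\iota_{T,T'}$. Let $\sigma:\{1,\ldots,n+1\}\to\{1,\ldots,n\}$ be the order-preserving surjection that collapses $k$ and $k+1$ to $k$. Reading off the five cases of the displayed formula, $\iota_{T,T'}$ sends an array $f$ to the array $g$ with
\[
 g_{ij}=f_{\sigma(i)\sigma(j)}\qquad(1\le i<j\le n+1).
\]
With the conventions $f_{ii}=0$ and $f_{ji}=f_{ij}$, this gives in particular $g_{k,k+1}=0$. I will first verify this reformulation against the five cases, since it is just the linking-number cloning rule.

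\textbf{Commutation with cloning.} This becomes a short computation for each operator.
\begin{itemize}[leftmargin=*]
\item For $L$: since $\sigma(1)=1$, we get $(L_{n+1}g)_{ij}=f_{1\sigma(j)}-f_{1\sigma(i)}=(L_nf)_{\sigma(i)\sigma(j)}$. The case $k=1$ is covered by the convention $f_{11}=0$.
\item For $R$: the same argument works with $\sigma(n+1)=n$.
\item For $J$: the consecutive pairs $(r,r+1)$ with $r\ne k$ map bijectively onto consecutive pairs of $\{1,\ldots,n\}$ under $\sigma$. The extra pair $(k,k+1)$ contributes $g_{k,k+1}=0$. So $\sum_{r=i}^{j-1}g_{r,r+1}=\sum_{s=\sigma(i)}^{\sigma(j)-1}f_{s,s+1}$, which again equals $(J_nf)_{\sigma(i)\sigma(j)}$.
\end{itemize}
Since every element of $M$ lies in some $M_T$ (Lemma~\ref{lem:pair-functions}), and any two trees have a common refinement reached by single-caret steps, the stagewise maps glue to well-defined additive maps $L,R,J:M\to M$.

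\textbf{$F$-equivariance.} Given $f\in F$ and $m\in M$, choose a tree pair $(T,T')$ for $f$ with $m\in M_T$. Refining further if necessary is harmless by the previous step. The element $f$ carries the ordered leaves $A_1<\cdots<A_n$ of $T$ to the ordered leaves $B_1<\cdots<B_n$ of $T'$, so by \eqref{eq:pair-pushforward-rules} $f_*e_{A_i,A_j}=e_{B_i,B_j}$. In ordered bases, $f_*$ is therefore the identity matrix from $M_T$ to $M_{T'}$. Since $L,R,J$ are given on every stage by the same index-only formulas, they commute with $f_*$, and hence lie in $\operatorname{End}_{\mathbb ZF}(M)$. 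I expect this bookkeeping to be the only delicate point: one must check that the choice of stage does not matter, and this is exactly what the cloning compatibility supplies.

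\textbf{The image $C$.} First, $C$ is a well-defined subgroup of $M$: cloning a gradient array $u_j-u_i$ gives the gradient of the vector $u\circ\sigma$, which duplicates $u_k$. Next, each image lies in $C$:
\begin{itemize}[leftmargin=*]
\item $L_nf$ is the gradient of $u_i=f_{1i}$;
\item $R_nf$ is the gradient of $u_i=-f_{in}$;
\item $J_nf$ is the gradient of $u_i=\sum_{r<i}f_{r,r+1}$, by telescoping.
\end{itemize}
Conversely, each map fixes gradient arrays. For $L$, $(u_j-u_1)-(u_i-u_1)=u_j-u_i$. For $R$, $(u_n-u_i)-(u_n-u_j)=u_j-u_i$. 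For $J$, $\sum_{r=i}^{j-1}(u_{r+1}-u_r)=u_j-u_i$. Hence each of $L,R,J$ restricts to the identity on $C$. It follows that each image contains $C$, so all three images equal $C$.
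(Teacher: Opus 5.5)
Your proposal is correct and follows essentially the same route as the paper: the coordinate form $g_{ij}=f_{\sigma(i)\sigma(j)}$ of ordered cloning, a direct check of compatibility for each of $L,R,J$, $F$-equivariance from the index-only formulas on matched ordered leaves, and the explicit gradient vectors $u_i=f_{1i}$, $u_i=-f_{in}$, $u_i=\sum_{r<i}f_{r,r+1}$ together with the fact that each operator fixes gradient arrays. Your remarks on gluing the stagewise maps and on cloning preserving gradient arrays make explicit some bookkeeping that the paper leaves implicit.
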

\begin{proof} 
For a cloning of leaf $k$ in an $n$-leaf partition, let
$p:\{1,\ldots,n+1\}\to\{1,\ldots,n\}$ be the order-preserving
surjection identifying $k,k+1$. Denote the cloning inclusion by $\iota_{n,k}$, the commutation means that $O_{n+1}\circ \iota_{n,k}=\iota_{n,k}\circ O_n$, for $O\in \{L,R,J\}$.  After applying $\iota_{n,k}$ the new arrays are denoted by $f'_{ij}$, where
\begin{equation}\label{eq:bf-array-cloning}
 f'_{ij}=f_{p(i),p(j)};
\end{equation}
the convention $f_{rr}=0$ accounts for the zero linking between the two clones. Since $p(1)=1$ and $p(n+1)=n$,
\[
 (L_{n+1}f')_{ij}=f_{1,p(j)}-f_{1,p(i)},\qquad
 (R_{n+1}f')_{ij}=f_{p(i),n}-f_{p(j),n}.
\]
These are precisely the cloned arrays of $L_nf$ and $R_nf$;
they also vanish on the new clone pair. 

In the sum for $J_{n+1}$,
each old consecutive pair between $p(i)$ and $p(j)$ occurs once,
and the additional clone pair contributes zero. Thus
\[
 (J_{n+1}f')_{ij}
   =\sum_{r=p(i)}^{p(j)-1}f_{r,r+1},
\]
which is the cloned array of $J_nf$. This proves compatibility.
The ordered action of $F$ matches equally indexed leaves after
common refinement. The formulas depend only on the indices and
commute with refinement, so they also commute with $F$.

For $L_n$ take $u_i=f_{1i}$, for $R_n$ take $u_i=-f_{in}$, and for $J_n$ take $u_i=\sum_{r=1}^{i-1}f_{r,r+1}$, with the empty sum equal to zero. So all three outputs have form \eqref{eq:bf-gradient-array}. Conversely, if $f_{ij}=u_j-u_i$, then
\[
 f_{1j}-f_{1i}=u_j-u_i,
 \quad f_{in}-f_{jn}=u_j-u_i,
 \quad \sum_{r=i}^{j-1}f_{r,r+1}=u_j-u_i.
\]
Thus each map fixes all gradient arrays, and their images coincide
at every stage and in the direct limit.
\end{proof}

Next, we say that the endomorphisms $I$, $L$, $R$, and $J$ generates $\operatorname{End}_{\mathbb ZF}(M)$.

\begin{prop}\label{prop:bf-module-commutant}
Every $\tau\in\operatorname{End}_{\mathbb ZF}(M)$ has a unique
expression
\begin{equation}\label{eq:bf-four-parameter}
 \tau=aI+bL+cR+dJ\qquad(a,b,c,d\in\mathbb Z).
\end{equation}
Thus the underlying abelian group of this endomorphism ring is
free of rank four. Its multiplication is determined by $I$ being
the identity and
\begin{equation}\label{eq:bf-operator-multiplication}
 AB=B\qquad\text{for }A,B\in\{L,R,J\}.
\end{equation}
\end{prop}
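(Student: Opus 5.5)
By Lemma~\ref{lem:bf-stage-preservation}, every $\tau\in\operatorname{End}_{\mathbb ZF}(M)$ is a compatible family of matrices $\tau_n$ on $E_n$. Each $\tau_n$ depends only on $n$, and the family commutes with every ordered cloning inclusion $\iota_{n,k}$. The plan is to show that $\tau$ is determined by four integers, to check that $I,L,R,J$ realize a $\mathbb Z$-basis of these four-tuples, and then to read off the multiplication from Lemma~\ref{lem:L,R,J-endomorphisms}.

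\emph{Reduction to two test vectors.} Let $A_1<\cdots<A_n$ be a leaf partition. Write $X_i=A_1\cup\cdots\cup A_i$ for the initial convex segments and $Z_j=A_j\cup\cdots\cup A_n$ for the final ones, with $X_0=Z_{n+1}=\varnothing$. For $i<j$, pointwise inclusion--exclusion gives
\[
 e_{A_i,A_j}=e_{X_i,Z_j}-e_{X_{i-1},Z_j}-e_{X_i,Z_{j+1}}+e_{X_{i-1},Z_{j+1}}.
\]
Every nonzero term has the form $e_{X,Z}$, where $X<Z$ are an initial and a final convex set. So $\Cantor=X\sqcup W\sqcup Z$ is an ordered partition into convex clopen pieces, with $W$ either empty or nonempty.

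Lemma~\ref{lem:bf-local-density} (with $U=\Cantor$) lets $F$ match any two such ordered partitions with the same number of nonempty pieces. By equivariance, $\tau$ is then determined by two values. The first is $\tau(e_{[0],[1]})=\kappa\,e_{[0],[1]}$, which lies in the stage-$2$ module of rank one. The second is $\tau(e_{[0],[11]})=x\,e_{12}+y\,e_{13}+z\,e_{23}$ for the three-leaf tree $[0]<[10]<[11]$, which lies in $M_T$ by Lemma~\ref{lem:bf-stage-preservation}. Hence $\tau\mapsto(\kappa,x,y,z)\in\mathbb Z^4$ is an injective additive map. No separate consistency analysis is needed: injectivity plus the existence of enough realized tuples suffices.

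\emph{Spanning.} The formulas \eqref{eq:bf-left-operator}--\eqref{eq:bf-adjacent-operator} give the following tuples:
\begin{align*}
 I&\mapsto(1,0,1,0), & L&\mapsto(1,0,1,1),\\
 R&\mapsto(1,1,1,0), & J&\mapsto(1,0,0,0).
\end{align*}
For instance, $L_3e_{13}$ has coordinates $(0,1,1)$ and $L_2e_{12}=e_{12}$. Subtracting the $J$-tuple from the other three leaves $(0,0,1,0)$, $(0,0,1,1)$ and $(0,1,1,0)$, so the four tuples form a $\mathbb Z$-basis of $\mathbb Z^4$. Given $\tau$, choose $a,b,c,d$ so that $aI+bL+cR+dJ$ has the same tuple. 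Then $\tau-(aI+bL+cR+dJ)$ has tuple zero, and by injectivity it vanishes. Uniqueness follows from the same linear independence.

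\emph{Multiplication.} Lemma~\ref{lem:L,R,J-endomorphisms} says that each of $L,R,J$ has image exactly $C$ and restricts to the identity on $C$. Hence for $A,B\in\{L,R,J\}$ and any $m\in M$, we have $ABm=A(Bm)=Bm$, which is \eqref{eq:bf-operator-multiplication}.

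I expect the main obstacle to be the reduction step. One must check that the convex pieces $X,W,Z$ are clopen, and that equivariance transports the value on the model $e_{[0],[11]}$ correctly. The latter means that matching the ordered partition $(X,W,Z)$ with $([0],[10],[11])$ carries the indexed basis $e_{12},e_{13},e_{23}$ to the corresponding indicators, as in the last paragraph of the proof of Lemma~\ref{lem:bf-stage-preservation}. The degenerate case $W=\varnothing$ is handled the same way at stage $2$.
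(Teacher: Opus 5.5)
Your proposal is correct, but it determines $\tau$ by a different route than the paper.

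\textbf{The paper's route.} It uses $F$ only through Lemma~\ref{lem:bf-stage-preservation}, which turns $\tau$ into a family $\tau_n$ on the arrays $E_n$. It then works entirely with cloning compatibility:
\begin{itemize}
\item commutation with the two cloning maps $E_2\to E_3$ shows that $\tau_3$ is fixed by $\ell$ and $\tau_3(e_{13})=ue_{12}+ve_{13}+we_{23}$;
\item a separate combinatorial lemma shows that for $n\ge4$ the cloning images of $E_{n-1}$ generate $E_n$ (explicit formulas for $n=4,5$, a counting argument for $n\ge6$);
\item hence $\tau_2,\tau_3$ determine every $\tau_n$ by induction.
\end{itemize}

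\textbf{Your route.} You use $F$-equivariance more strongly, matching ordered convex partitions whose pieces are unions of several leaves. Your inclusion--exclusion identity is correct. Combined with the transitivity in Lemma~\ref{lem:bf-local-density}, it shows that $M$ is generated as a $\mathbb ZF$-module by the two elements $e_{[0],[1]}$ and $e_{[0],[11]}$. Their images lie in stages of rank one and rank three. This bypasses both the $n=3$ cloning computation and the generation lemma for $n\ge4$, and it makes the rank bound $1+3=4$ transparent.

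\textbf{Agreement of parameters.} Your tuples for $I,L,R,J$ are right. Under $(\kappa,x,y,z)=(\ell,u,v,w)$ they agree with the paper's conversion $b=w$, $c=u$, $a=v-u-w$, $d=\ell-v$. The change-of-basis determinant is $\pm1$, so you get existence and uniqueness over $\mathbb Z$ together.

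\textbf{Trade-off.} The paper's determination step needs only that the stage maps commute with cloning. It never uses equivariance under elements of $F$ that merge several leaves into one convex piece. Your argument uses that extra structure and is correspondingly shorter.

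\textbf{Multiplication rule.} Your derivation of $AB=B$ is the same as the paper's.
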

\begin{proof} 
Let $\tau\in\operatorname{End}_{\mathbb ZF}(M)$ be an arbitrary endomorphism. 
By Lemma~\ref{lem:bf-stage-preservation}, $\tau$ gives an indexed map $\tau_n:E_n\to E_n$ for each number $n$ of leaves. These maps commute with each cloning inclusion
$\iota_{n,k}:E_n\to E_{n+1}$, which is the linear map in
\eqref{eq:bf-array-cloning}. 

At $n=2$, write $\tau_2(e_{12})=\ell e_{12}$, with $\ell\in\mathbb Z$. So $\tau_2$ is uniquely determined by the integer $\ell \in \mathbb Z$.

At $n=3$, cloning the first or the second leaf of $E_2$ gives
respectively $e_{13}+e_{23}$ and $e_{12}+e_{13}$. Hence
\begin{align*}
 \tau_3(e_{13}+e_{23})&=\ell(e_{13}+e_{23}),\\
 \tau_3(e_{12}+e_{13})&=\ell(e_{12}+e_{13}).
\end{align*}
Write $\tau_3(e_{13})=u e_{12}+v e_{13}+w e_{23}$.
Subtracting determines the other two basis images:
\begin{align*}
 \tau_3(e_{12})&=(\ell-u)e_{12}+(\ell-v)e_{13}-w e_{23},\\
 \tau_3(e_{23})&=-u e_{12}+(\ell-v)e_{13}+(\ell-w)e_{23}.
\end{align*}
The four integers $\ell,u,v,w$ therefore determine $\tau_2,\tau_3$.

Now we consider a new endomorphism $\tau'=aI+bL+cR+dJ$ where $a,b,c,d\in\mathbb Z$, such that the indexed maps at $n=2,3$ are the same as $\tau$: $\tau'_2=\tau_2$ and $\tau'_3=\tau_3$.
Using \eqref{eq:bf-left-operator}--\eqref{eq:bf-adjacent-operator},
the endomorphism $\tau'$ exists if we pick
\begin{equation}\label{eq:bf-parameter-conversion}
 b=w,\qquad c=u,\qquad a=v-u-w,\qquad d=\ell-v.
\end{equation}
In particular, $\ell=a+b+c+d$.
 
We prove that $\tau=\tau'$ by proving that all other indexed maps are equal: $\tau_n=\tau'_n$ for $n\ge4$.

We first prove that the images of all cloning maps
$E_{n-1}\to E_n$ generate $E_n$ as an abelian group for $n\ge4$.
Here each $e_{ij}$ in an expression is in the indicated source
or target according to its indices. For $n=4$, explicit integral
expressions are
\begin{align*}
 e_{12}&=\iota_{3,3}(e_{12}), &
 e_{14}&=\iota_{3,2}(e_{13}), &
 e_{34}&=\iota_{3,1}(e_{23}),\\
 e_{13}&=\iota_{3,3}(e_{13})-e_{14}, &
 e_{24}&=\iota_{3,1}(e_{13})-e_{14}, &
 e_{23}&=\iota_{3,1}(e_{12})-e_{13}.
\end{align*}
For $n=5$, every pair except $(2,4)$ has two consecutive indices
in its complement. Merge those consecutive indices: the resulting
cloning map sends one basis element directly to the desired
$e_{ij}$. For the exceptional pair,
\[
 e_{24}=\iota_{4,1}(e_{13})-e_{14},
\]
and $e_{14}$ is already covered by the preceding construction.
For $n\ge6$, the complement of any two indices contains two
consecutive indices. Indeed, a subset with no consecutive indices
has at most $\lceil n/2\rceil$ members, while
$n-2>\lceil n/2\rceil$. Merging two consecutive complementary
indices again makes $e_{ij}$ a direct cloning image. This proves
the generation assertion for every $n\ge4$.

So $E_n$ is generated by the image of all cloning maps for $n\ge4$. It follows inductively that maps commuting with cloning are uniquely determined by their maps at $n=2,3$. Hence, $\tau=\tau'=aI+bL+cR+dJ$. And \eqref{eq:bf-parameter-conversion} also proves uniqueness over
$\mathbb Z$. Finally, each of $L,R,J$ takes values in $C$ and
restricts to the identity there. Thus $A(Bf)=Bf$, proving
\eqref{eq:bf-operator-multiplication}.
\end{proof}

From Lemma~\ref{lem:L,R,J-endomorphisms}, we have that $J^2=J$ and the direct decomposition, with components $Jf$ and $(I-J)f$. So $M=C\oplus \ker J $. The common image at a tree with $n$ leaves is freely generated by the $n-1$ cut functions separating the first $r$ leaves from the last $n-r$ leaves, for $1\le r<n$. This helps us describe the surjective endomorphism clearly:

\begin{lem}
\label{lem:bf-module-units}
Suppose $\tau=aI+bL+cR+dJ$ is surjective on $M$, and put
$\ell=a+b+c+d$. Then
\[
 a\in\{1,-1\},\qquad \ell\in\{1,-1\}.
\]
Consequently $\tau|_{M_T}$ is an automorphism for every tree $T$,
and $\tau$ is an automorphism of $M$.
\end{lem}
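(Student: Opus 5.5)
The plan is to use the decomposition $M=C\oplus\ker J$ recorded after Lemma~\ref{lem:L,R,J-endomorphisms}, and to compute how $\tau$ acts on each summand.

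\textbf{Action on $C$.} Each of $L,R,J$ restricts to the identity on $C$, so for $g\in C$
\[
 \tau g=(a+b+c+d)g=\ell g .
\]
Thus $\tau|_C=\ell\cdot\mathrm{id}_C$.

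\textbf{Action on $\ker J$.} For $k\in\ker J$ we get $\tau k=ak+(bL+cR)k$, and $(bL+cR)k\in C$. Using the relations $AB=B$ of Proposition~\ref{prop:bf-module-commutant}, for any $A\in\{L,R\}$ we have $JA=A$. This gives
\[
 J\tau k=J(ak)+(bL+cR)k=(bL+cR)k .
\]
Hence, in the decomposition $M=C\oplus\ker J$, the operator $\tau$ is block upper triangular. Its diagonal blocks are $\ell\cdot\mathrm{id}_C$ and $a\cdot\mathrm{id}_{\ker J}$, and its off-diagonal block is $bL+cR\colon\ker J\to C$.

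\textbf{Surjectivity forces $a,\ell=\pm1$.}
\begin{itemize}
\item Apply the projection $I-J$ onto $\ker J$, which kills $C$. Surjectivity of $\tau$ then gives $(I-J)\tau(M)=a\ker J=\ker J$. This requires $\ker J\neq0$. That holds, for instance, because $e_{13}-e_{12}-e_{23}$ at a $3$-leaf stage lies in $\ker J$ and is nonzero, with a coordinate equal to $1$. Evaluating at that coordinate, exactly as in the proof of Proposition~\ref{prop:signed-spatial}, forces $a=\pm1$.
\item For $\ell$, work on $M_T$ with $T$ a two-leaf tree. There $M_T=\mathbb Z e_{12}$ and $\tau_2=\ell$. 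Surjectivity on $M$ does not immediately give surjectivity on $M_T$. However, Lemma~\ref{lem:bf-stage-preservation} shows $\tau$ preserves every $M_T$, and $M_T\cap C$ is the stage-$T$ gradient lattice.
\item I would use the quotient $M/\ker J\cong C$. The map induced by $\tau$ on this quotient is $\ell$, since $\tau(\ker J)\subseteq \ker J+C$. Surjectivity of $\tau$ implies surjectivity of the induced map on $M/(\ker J)$, because $\ker J$ is not $\tau$-invariant only up to $C$.
\item The cleaner route is the triangular structure the other way. Project onto $C$ by $J$ and restrict to $C$: $J\tau(M)=\ell C+(bL+cR)\ker J$. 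This is not immediately helpful, so instead I would use $\tau=\ell J+a(I-J)+(b L+cR)(I-J)$, whose determinant on $M_T$ is $\ell^{n-1}a^{\binom n2-(n-1)}$.
\end{itemize}

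\textbf{Main obstacle: passing from surjectivity on $M$ to the finite stage.} Knowing that $\tau|_{M_T}$ is a matrix with this determinant, I must show that surjectivity on $M$ forces surjectivity on each $M_T$. I would prove it directly. Given $m\in M_T$, pick $f\in M$ with $\tau f=m$, and let $f\in M_U$ for some refinement $U\ge T$. Use the strand-deletion retraction at the module level: the map $E_U\to E_T$ keeping one descendant of each leaf. This map is compatible with $L,R,J$ if I keep the leftmost descendant of the first leaf and the rightmost descendant of the last leaf. That choice makes the formulas for $L,R$ (which use indices $1,n$) and the consecutive sums for $J$ commute with deletion. Check $J$ carefully: this is the main obstacle, and if it fails I would instead argue via determinants, since $\tau_U$ surjective on the finite-rank lattice $E_U$ follows only if $M_U$ is hit. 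So the retraction argument is the approach I would try first. Then $\tau_T$ is surjective on $E_T$, so $\det=\pm1$. Taking $n=2$ gives $\ell=\pm1$, and $n=3$ gives $a=\pm1$.

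\textbf{Conclusion.} With $a,\ell=\pm1$, the determinant $\ell^{n-1}a^{\binom n2-(n-1)}$ is a unit, so each $\tau|_{M_T}$ is an automorphism of the finite-rank free group $M_T$. These automorphisms are compatible with cloning, so their inverses are as well. Passing to the direct limit $M=\varinjlim_T M_T$ then shows that $\tau$ is an automorphism of $M$.
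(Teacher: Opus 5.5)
\textbf{There is a genuine gap: the argument for $\ell=\pm1$ does not go through.}

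Your handling of $a$ is essentially the paper's. Your triangular decomposition is equivalent to $(I-J)\tau=a(I-J)$, which gives $a\ker J=\ker J$ and hence $a=\pm1$. Your witness is wrong, though. $J$ reads only consecutive coordinates, and $J_3(e_{13}-e_{12}-e_{23})=-e_{12}-2e_{13}-e_{23}\neq0$. Take $e_{13}$ itself, i.e.\ the paper's $e_{[0],[11]}$, which $J$ does kill.

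The quotient $M/\ker J$ carries no induced map, because $\ker J$ is not $\tau$-invariant: $\tau e_{13}=(a+b+c)e_{13}+be_{23}+ce_{12}$. Your inclusion $\tau(\ker J)\subseteq\ker J+C=M$ is vacuous. In your triangular picture the invariant summand is $C$, and the induced map on $M/C$ is $a$, which only reproduces your first bullet.

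The retraction route fails exactly where you feared. Keeping the leftmost and rightmost descendants makes deletion commute with $L$ and $R$, but not with $J$. Split the first leaf of a two-leaf tree and keep strands $1$ and $3$. Then $(DJ_3f)_{12}=f_{12}+f_{23}$, while $(J_2Df)_{12}=f_{13}$. No other choice of retraction helps either. A retraction $E_3\to E_2$ commuting with $L,R,J$ must kill $\ker J_3=\mathbb Ze_{13}$, $\ker L_3=\mathbb Ze_{23}$ and $\ker R_3=\mathbb Ze_{12}$, so it is zero. Your fallback to determinants presupposes the finite-stage surjectivity it was meant to establish.

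\textbf{The missing idea} is to look at preimages of elements of $C$, rather than at projections of the image. Suppose $\tau m\in C$. Then $a(I-J)m=(I-J)\tau m=0$. Since $a\neq0$ and $M$ is torsion-free, $(I-J)m=0$, so $m\in C$. Hence surjectivity of $\tau$ forces $\tau(C)=\ell C=C$. Evaluating a preimage of $e_{[0],[1]}\in C$ then gives $\ell=\pm1$. This is the paper's argument, and it needs no passage to finite stages.

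Once $a,\ell=\pm1$ are known, your block-triangular determinant $\ell^{n-1}a^{\binom n2-(n-1)}$ on $M_T$ is a unit. Your conclusion that $\tau$ is an automorphism of each $M_T$ and of $M$ then goes through. The paper instead writes the inverse explicitly as $a^{-1}I-(a\ell)^{-1}N$, with $N=bL+cR+dJ$, using $N^2=(\ell-a)N$.
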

\begin{proof}
Using \eqref{eq:bf-operator-multiplication}, we have that
\[
 (I-J)\tau=a(I-J).
\]
Choose the three-leaf partition $[0],[10],[11]$. Consider the function $e_{[0],[11]}$ and apply $J$ on it. We have that $Je_{[0],[11]}=J_3e_{1,3}=0$, hence $e_{[0],[11]}$ lies in $\ker J$. By surjectivity, there exists $m$ with
$\tau m=e_{[0],[11]}$. Applying $I-J$, we have that
\[
 (I-J)\tau m=a(I-J)m=(I-J)e_{[0],[11]}=e_{[0],[11]}.
\]
Since $(I-J)m$ and $e_{[0],[11]}$ are both integer-valued functions, and $e_{[0],[11]}$ only takes values in $\{0,1\}$.
This implies $a=\pm1$.

Since the image of $J$ in $M$ is $C$, for any $u\in C$, there exists $m_u\in M$ such that $Jm_u =u$. Hence by
\eqref{eq:bf-operator-multiplication}, we have that
$$\tau u=\tau Jm_u=(aI+bL+cR+dJ)Jm_u=(a+b+c+d)Jm_u=\ell u.$$

In particular, we have that $\tau|_C=\ell I|_C$. For any $c_0\in C$, using the surjectivity of $\tau$, there exists $m_0 \in M$, such that $\tau m_0=c_0$. Since $c_0\in C$, apply $I-J$ gives us $a(I-J)m_0=0$ which means that $m_0 \in C$. Therefore surjectivity of $\tau$ implies that $\tau|_C$ is surjective, $\tau C=C$ and $\ell C=C$. Now consider the function $e_{[0],[1]}$ which lies in $C$ by $Je_{[0],[1]}=e_{[0],[1]}$. Then there exists $m' \in C$ such that $\tau m'=\ell m'= e_{[0],[1]}$. As before, the evaluation implies that $\ell=\pm1$.

Put $N=bL+cR+dJ$, so $\tau=aI+N$. By
\eqref{eq:bf-operator-multiplication},
$N^2=(\ell-a)N$. Direct multiplication therefore gives
\begin{equation}\label{eq:bf-module-inverse}
 \tau^{-1}=a^{-1}I-(a\ell)^{-1}N.
\end{equation}
Both coefficients are integers because $a,\ell$ are units in
$\mathbb Z$. The same formula preserves every $M_T$, since each
operator does. Thus the restriction to every finite stage is
an automorphism.
\end{proof}

\begin{ex}
\label{ex:bf-module-extra-unit}
The map $I+L-R$ is an $F$-equivariant automorphism of $M$.
Here $a=\ell=1$ and $(L-R)^2=0$, so its inverse is $I-L+R$.
On the four-leaf array whose six coordinates all equal $1$, its
output, in the order $12,13,14,23,24,34$, is
\[
 (2,2,1,1,0,0).
\]
These coordinates are not all equal. The next two lemmas show that
this automorphism cannot be the degree-one part of a graded Lie
endomorphism preserving the pure braid stages. Thus an automorphism
of the abelianization alone does not finish the $BF$ argument.
\end{ex}

\subsection{The Lie bracket eliminates the three extra operators}

We continue to use $\gr(P)$ and its lower-central grading from
Section~\ref{sec:graded-preliminaries}. By
Lemma~\ref{lem:p-residual}, inclusion of a finite pure braid stage
induces an injective map of graded Lie rings. We therefore identify
\[
 \gr(\PB_T)\subseteq\gr(P).
\]
Its degree-one part is $M_T\cong \PB_T/[\PB_T,\PB_T] $, and it is generated as a Lie ring
by $M_T$, by Lemma~\ref{lem:degree-one}. In its abstract $n$-strand
model, denote by $t_{ij}$ the degree-one class of the standard pure
braid generator $A_{ij}$. Under the linking-number identification,
$t_{ij}$ is $e_{ij}$. The notation $t_{ij}$ emphasizes that a Lie
bracket can be applied to this class.

The center of $\gr(\PB_n)$ is determined in \cite[Corollary~1.3]{CohenPrassidis2006}

\begin{lem}(\cite[Corollary~1.3]{CohenPrassidis2006})
\label{lem:bf-stage-center}
For $n\ge3$, a degree-one element of $\gr(\PB_n)$ that commutes
with every degree-one generator is a multiple of
\begin{equation}\label{eq:bf-central-class}
 z_n=\sum_{1\le i<j\le n}t_{ij}.
\end{equation}
The element $z_n$ commutes with the entire Lie ring.
\end{lem}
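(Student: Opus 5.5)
We plan to handle the two assertions separately: a direct bracket computation for centrality of $z_n$, and a reduction to three strands via strand deletion for the converse. The one structural input is the degree-two relations in $\gr(\PB_n)$, which we take from the standard infinitesimal pure braid relations:
\[
 [t_{ij},t_{kl}]=0\quad(\{i,j\}\cap\{k,l\}=\varnothing),\qquad
 [t_{ij},t_{ik}+t_{jk}]=0\quad(i,j,k\ \text{distinct}),
\]
with the convention $t_{ji}=t_{ij}$. These hold because the corresponding group commutators lie in $\gamma_3(\PB_n)$. We also use that $\gr_2(\PB_3)$ is free abelian of rank one. This follows from \eqref{eq:pb-lower-central-splitting}, which gives $\gr_2(\PB_3)\cong\gr_2(F_2)\oplus\gr_2(\PB_2)$ with $\gr_2(\PB_2)=0$ and $\gr_2(F_2)\cong\mathbb Z$.

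\emph{Centrality of $z_n$.} Fix $k<l$ and expand $[t_{kl},z_n]$ term by term.
\begin{itemize}
\item The term $[t_{kl},t_{kl}]$ vanishes by alternation.
\item Terms $t_{ij}$ with $\{i,j\}$ disjoint from $\{k,l\}$ vanish by the first relation.
\item The remaining terms pair up, one pair for each third index $m\notin\{k,l\}$, as $[t_{kl},t_{km}+t_{lm}]$, and each pair vanishes by the second relation.
\end{itemize}
Hence $z_n$ commutes with every degree-one generator. By bilinearity and the Jacobi identity, the set of elements commuting with $z_n$ is a Lie subring containing $\gr_1$. Since $\gr(\PB_n)$ is generated by degree one (Lemma~\ref{lem:degree-one}), $z_n$ is central.

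\emph{Base case $n=3$.} Put $g=[t_{13},t_{23}]$, a generator of $\gr_2(\PB_3)\cong\mathbb Z$. The triangle relations give $[t_{12},t_{13}]=g$, $[t_{12},t_{23}]=-g$ and $[t_{13},t_{12}]=-g$. For $x=a\,t_{12}+b\,t_{13}+c\,t_{23}$ we then compute:
\begin{itemize}
\item $[x,t_{12}]=(c-b)g$, so commuting with $t_{12}$ forces $b=c$;
\item $[x,t_{13}]=(a-c)g$, so commuting with $t_{13}$ forces $a=c$.
\end{itemize}
Since $g$ has infinite order, $a=b=c$ and $x$ is a multiple of $z_3$.

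\emph{Reduction for $n\ge3$.} Let $x=\sum a_{ij}t_{ij}$ commute with every $t_{kl}$. For each triple $i<j<k$, deleting all other strands is a group homomorphism $\PB_n\to\PB_3$. By Proposition~\ref{prop:induced-graded-map} it induces a Lie ring homomorphism $\delta$. On degree one, $\delta$ sends $t_{rs}$ to the corresponding generator of $\gr(\PB_3)$ when $r,s\in\{i,j,k\}$, and to $0$ otherwise; this is read off from pairwise linking numbers via \eqref{eq:pb-abelianization}. Applying $\delta$ to $[x,t_{rs}]=0$ for $r,s$ in the triple shows that $\delta(x)=a_{ij}t_{12}+a_{ik}t_{13}+a_{jk}t_{23}$ commutes with all three generators of $\gr(\PB_3)$. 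The base case then gives $a_{ij}=a_{ik}=a_{jk}$. Any two pairs of indices are linked by a chain of pairs sharing a triple, so all the $a_{ij}$ are equal and $x\in\mathbb Z z_n$.

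The main obstacle is justifying the degree-two input over $\mathbb Z$: the infinitesimal relations and, especially, that $g$ has infinite order in $\gr_2(\PB_3)$. Without the latter the coefficient comparisons collapse. We intend to obtain it from the Falk--Randell splitting together with the classical fact $\gr_2(F_2)\cong\mathbb Z$, which is generated by the class of the commutator of the two free generators.
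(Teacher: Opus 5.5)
Your proposal is correct, but it takes a different route from the paper. The paper computes nothing. It quotes \cite[Corollary~1.3]{CohenPrassidis2006}, which identifies the \emph{entire} center of $\gr(\PB_n)$ as $\mathbb Z z_n$. It then notes that, since $\gr(\PB_n)$ is generated in degree one, a degree-one element commuting with every $t_{ij}$ is central and hence lies in $\mathbb Z z_n$. You instead prove only the degree-one statement that Proposition~\ref{prop:bf-lie-scalar} actually uses, and you need only degree-two input. That input is the infinitesimal pure braid relations and the fact that $\gr_2(\PB_3)$ is infinite cyclic. You correctly extract the latter from the Falk--Randell splitting \eqref{eq:pb-lower-central-splitting} already quoted in the paper, which yields a split exact sequence $0\to\gr_k(F_{n-1})\to\gr_k(\PB_n)\to\gr_k(\PB_{n-1})\to0$. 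Your bracket computations check out: centrality of $z_n$, the three-strand calculation, and the reduction through the strand-deletion homomorphisms $\PB_n\to\PB_3$ with the chain-of-triples argument. The paper's citation is shorter and gives the stronger fact that no higher-degree element is central, which is more than is needed. Your argument is essentially self-contained given the standard relations, and it shows that the lemma is purely a degree-two phenomenon. One point you should make explicit is why $g\neq0$, hence why $g$ has infinite order. By Lemma~\ref{lem:degree-one}, $\gr_2(\PB_3)$ is spanned by the brackets $[t_{rs},t_{uv}]$. The triangle relations make each of these $0$ or $\pm g$, so $\gr_2(\PB_3)=\mathbb Z g\cong\mathbb Z$.
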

\begin{proof}
By \cite[Corollary~1.3]{CohenPrassidis2006},
the center of $\gr(\PB_n)$ is $\mathbb Z z_n$.
Their $E_0^*(P_n)$ and $\Delta(n)$ correspond to our
$\gr(\PB_n)$ and $z_n$, respectively.
Since $\gr(\PB_n)$ is generated by the degree-one elements
$t_{ij}$, any element commuting with all these generators
is central.
\end{proof}

As in the proof of Theorem~\ref{thm:bv-hopfian}, every homeomorphism $h$ of $\Cantor$ can induce a graded Lie
automorphism of $\gr(P)$.

\begin{lem}
\label{lem:bf-spatial-graded}
For every homeomorphism $h$ of $\Cantor$ there is a graded Lie
automorphism $\mathcal H_h$ of $\gr(P)$ whose degree-one action
is $h_*$. It is characterized by
\begin{equation}\label{eq:bf-spatial-polynomial}
 \mathcal H_h\bigl(Q(m_1,\ldots,m_r)\bigr)
     =Q(h_*m_1,\ldots,h_*m_r),\qquad m_i\in M,
\end{equation}
for finite integral Lie polynomials $Q$. Moreover,
$\mathcal H_{h^{-1}}=\mathcal H_h^{-1}$.
\end{lem}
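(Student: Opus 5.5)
Following the proof of Theorem~\ref{thm:bv-hopfian}, we define $\mathcal H_h$ one homogeneous class at a time using elements of $V$, and check that the choice of element does not matter. Fix $k\ge1$ and $z\in\gr_k(P)$, and write $z=Q(m_1,\ldots,m_r)$ with $m_i\in M$, using Lemma~\ref{lem:degree-one}. By Lemma~\ref{lem:finite-realization}, choose $v\in V$ with $v_*m_i=h_*m_i$ for every $i$, lift $v$ to $b\in BV$, and put
\[
 \mathcal H_h(z)=C_b(z),\qquad C_b=\gr(c_b).
\]
By Lemma~\ref{lem:linking-conjugation}, $C_b$ is a graded Lie automorphism of $\gr(P)$ whose degree-one action is $v_*$. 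Since graded Lie homomorphisms commute with evaluation of Lie polynomials, this gives
\[
 C_b(z)=Q(v_*m_1,\ldots,v_*m_r)=Q(h_*m_1,\ldots,h_*m_r).
\]
This is exactly formula \eqref{eq:bf-spatial-polynomial}.

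The main obstacle is well-definedness: $z$ admits many expressions $Q(m_1,\dots,m_r)=Q'(m'_1,\dots,m'_s)$, and we must show that the right-hand sides of \eqref{eq:bf-spatial-polynomial} agree for both. To do this, apply Lemma~\ref{lem:finite-realization} once to the combined finite family $m_1,\dots,m_r,m'_1,\dots,m'_s$, obtaining a single $v$ that realizes $h_*$ on all of them. Then
\[
 Q(h_*m)=C_b\bigl(Q(m)\bigr)=C_b(z)=C_b\bigl(Q'(m')\bigr)=Q'(h_*m').
\]
This also shows that $C_b(z)$ depends only on $z$ and not on the chosen $v$ or lift $b$. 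Different lifts of the same $v$ give the same $C_b$ anyway, since on $\gr(P)$ they differ by an inner automorphism of $P$, which acts trivially by the proof of Proposition~\ref{prop:graded-lie-ring}.

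Next we check the homomorphism properties. Additivity and bracket preservation follow from the same device. Given homogeneous $z,z'$, write both with Lie polynomials and pick one $v$ realizing $h_*$ on all entries of both. Then
\[
 \mathcal H_h(z+z')=C_b(z+z'),\qquad \mathcal H_h([z,z'])=C_b([z,z']),
\]
and $C_b$ is additive and preserves brackets. We extend $\mathcal H_h$ to all of $\gr(P)$ by additivity over the direct sum. It preserves degree by construction, and its degree-one action is $h_*$, by taking $Q$ to be the single variable.

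Finally we prove $\mathcal H_{h^{-1}}=\mathcal H_h^{-1}$. For $z=Q(m)$,
\[
 \mathcal H_{h^{-1}}\mathcal H_h(z)=\mathcal H_{h^{-1}}\bigl(Q(h_*m)\bigr)=Q\bigl(h^{-1}_*h_*m\bigr)=z,
\]
and the composition in the other order is the identity by symmetry. Hence $\mathcal H_h$ is bijective, so it is a graded Lie automorphism. Uniqueness is immediate: a Lie ring homomorphism is determined by its values on the degree-one generators $M$, so \eqref{eq:bf-spatial-polynomial} characterizes $\mathcal H_h$.
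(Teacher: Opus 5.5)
Your proposal is correct and takes essentially the same route as the paper. In both, you realize $h_*$ on finitely many degree-one inputs by some $v\in V$ (Lemma~\ref{lem:finite-realization}), use the conjugation automorphism $C_b=\gr(c_b)$ for a lift $b\in BV$, and combine input families to get well-definedness, additivity, bracket preservation, and the inverse via substitution for $h^{-1}$. One small citation fix: that $C_b$ is a graded Lie automorphism comes from Proposition~\ref{prop:induced-graded-map}, while Lemma~\ref{lem:linking-conjugation} only identifies its degree-one action as $v_*$.
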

\begin{proof}
We use the same method as in the proof of Theorem~\ref{thm:bv-hopfian}. By Lemma~\ref{lem:degree-one}, every homogeneous class is a finite Lie polynomial in degree-one classes. We check that the proposed
substitution is independent of the polynomial expression.
For any finite collection of its inputs, choose $v\in V$ with
$v_*m_i=h_*m_i$ simultaneously, using
Lemma~\ref{lem:finite-realization}. Lift $v$ to $b\in BV$.
Conjugation by $b$ is an automorphism of $P$, hence induces a
graded Lie automorphism $C_b$; its degree-one action is $v_*$ by Lemma~\ref{lem:linking-conjugation}. On polynomials in this finite collection, substitution of $h_*m_i$ consequently agrees with
$C_b$.

If two polynomials represent the same class, apply this argument
to the combined finite collection of their inputs. Their
substituted difference is $C_b(0)=0$. The same argument, with the
inputs of two classes combined, proves additivity and preservation
of the Lie bracket. Substitution preserves degree. Doing the
substitution for $h^{-1}$ reverses it on all degree-one classes
and hence on all their Lie polynomials, giving the stated inverse.
This constructs an automorphism of the graded Lie ring. We do
not assume an automorphism of $BF$ or $BV$ lifting $h$.
\end{proof}

From Lemma~\ref{lem:bf-module-units}, we know that the surjective endomorphisms of $M$ already have rigidity in the coefficients. With property $\gr(\PB_T)\subseteq\gr(P)$, we have the stronger rigidity on the coefficients as follows.

\begin{prop}
\label{prop:bf-lie-scalar}
Let $\Gamma:\gr(P)\to\gr(P)$ be a graded Lie homomorphism whose
degree-one map $\tau$ is surjective and $F$-equivariant. Then
$\tau=\varepsilon I$ for $\varepsilon\in\{1,-1\}$, and
\[
 \Gamma|_{\gr_k(P)}=\varepsilon^k I\qquad(k\ge1).
\]
In particular, $\Gamma$ is an automorphism.
\end{prop}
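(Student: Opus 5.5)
The plan is to combine the four-parameter description of $\operatorname{End}_{\mathbb ZF}(M)$ with the central element $z_n$ of a finite pure braid stage. By Proposition~\ref{prop:bf-module-commutant}, write $\tau=aI+bL+cR+dJ$. By Lemma~\ref{lem:bf-module-units}, since $\tau$ is surjective, $a,\ell\in\{\pm1\}$ and $\tau|_{M_T}$ is an automorphism of $M_T$ for every tree $T$. The goal is to show $b=c=d=0$, using that $\Gamma$ preserves brackets.

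\textbf{Step 1: $\Gamma$ preserves each finite stage.} Fix a tree $T$ with $n\ge 4$ leaves. Lemma~\ref{lem:bf-stage-preservation} gives $\tau(M_T)\subseteq M_T$. By Lemma~\ref{lem:degree-one}, $\gr(\PB_T)\subseteq\gr(P)$ is generated as a Lie ring by $M_T$. Hence $\Gamma$ maps $\gr(\PB_T)$ into itself. All brackets of stage elements may be computed inside $\gr(\PB_T)$, because of the identification $\gr(\PB_T)\subseteq\gr(P)$ coming from Lemma~\ref{lem:p-residual}.

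\textbf{Step 2: $\tau(z_n)$ is a multiple of $z_n$.} By Lemma~\ref{lem:bf-stage-center}, $[z_n,t_{ij}]=0$ for all $i<j$. Applying $\Gamma$ gives
\[
 [\tau(z_n),\tau(t_{ij})]=0 .
\]
Since $\tau|_{M_T}$ is an automorphism, the elements $\tau(t_{ij})$ span $M_T$. So $\tau(z_n)$ is a degree-one element commuting with every generator $t_{ij}$, and Lemma~\ref{lem:bf-stage-center} gives $\tau(z_n)=m z_n$ for some $m\in\mathbb Z$.

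\textbf{Step 3: compute $\tau(z_n)$ and force $b=c=d=0$.} The class $z_n$ is the all-ones array. From \eqref{eq:bf-left-operator}--\eqref{eq:bf-adjacent-operator}, using $f_{11}=f_{nn}=0$, the three operators act on it as follows:
\begin{itemize}
\item $L_n$ gives $1$ on pairs with $i=1$ and $0$ otherwise;
\item $R_n$ gives $1$ on pairs with $j=n$ and $0$ otherwise;
\item $J_n$ gives $j-i$.
\end{itemize}
Hence
\[
 \tau(z_n)_{ij}=a+b[i=1]+c[j=n]+d(j-i).
\]
Take $n=4$ and require all coordinates to be equal:
\begin{itemize}
\item comparing $(1,2)$ with $(2,3)$ gives $b=0$;
\item comparing $(3,4)$ with $(2,3)$ gives $c=0$;
\item comparing $(1,3)$ with $(2,3)$ gives $d=0$.
\end{itemize}
Thus $\tau=aI$ with $a=\varepsilon\in\{\pm1\}$. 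This is consistent with Example~\ref{ex:bf-module-extra-unit}, where the all-ones array is not preserved up to scalar.

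\textbf{Step 4: higher degrees.} By Lemma~\ref{lem:degree-one}, every $z\in\gr_k(P)$ has the form $Q(m_1,\ldots,m_r)$ with $Q$ homogeneous of degree $k$. Since $\Gamma$ is a Lie homomorphism, \eqref{eq:homogeneous-scaling} gives
\[
 \Gamma(z)=Q(\varepsilon m_1,\ldots,\varepsilon m_r)=\varepsilon^k z .
\]
So $\Gamma|_{\gr_k(P)}=\varepsilon^k I$, and $\Gamma$ is an automorphism.

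The main obstacle is Step 2. One must make sure that the centrality statement of Lemma~\ref{lem:bf-stage-center} applies to $\tau(z_n)$, which needs both the stage preservation of $\Gamma$ and the surjectivity of $\tau|_{M_T}$. Once this is in place, the rest is the explicit coordinate computation above.
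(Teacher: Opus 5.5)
Your proposal is correct and follows the paper's proof essentially step for step. You write $\tau=aI+bL+cR+dJ$, use Lemma~\ref{lem:bf-module-units} and Lemma~\ref{lem:bf-stage-preservation} to get $\tau(M_T)=M_T$, push the centrality of $z_4$ through $\Gamma$ so that Lemma~\ref{lem:bf-stage-center} forces all six coordinates of $\tau(z_4)$ to be equal, read off $b=c=d=0$, and finish by homogeneity; your closed formula $\tau(z_n)_{ij}=a+b\,\delta_{i1}+c\,\delta_{jn}+d(j-i)$ reproduces the paper's table at $n=4$, and comparing different coordinate pairs is only a cosmetic difference.
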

\begin{proof}
By Proposition~\ref{prop:bf-module-commutant}, write
$\tau=aI+bL+cR+dJ$. Lemma~\ref{lem:bf-module-units} gives
$a=\pm1$ and makes its restriction to each $M_T$ surjective.
Lemma~\ref{lem:bf-stage-preservation} says that $\Gamma$ preserves
degree one of every finite stage: $\Gamma(M_T)\subseteq M_T$. Because $\gr(\PB_T)$ is
generated by that degree-one subgroup $M_T$, $\Gamma(\gr(\PB_T))
\subseteq\gr(\PB_T)$. Thus it restricts to a graded Lie
endomorphism $\Gamma_T$ of the finite-stage pure braid Lie ring.

Choose any four-leaf tree $T_4$. By
Lemma~\ref{lem:bf-stage-center}, the class $z_4=t_{12}+t_{13}+t_{14}+t_{23}+t_{24}+t_{34}$ is central in $\gr(PB_4)=\gr(PB_{T_4})$.
For every degree-one $m \in \gr_1(PB_4)$ there,
\[
 [\tau(z_4),\tau(m)]
    =\Gamma_{T_4}([z_4,m])=0.
\]
From the assumption and Lemma~\ref{lem:bf-module-units}, the map $\tau|_{M_{T_4}}$ is surjective for every $T_4$, so $\tau(z_4)$ also commutes
with every degree-one element in $\gr_1(PB_4)$. By
Lemma~\ref{lem:bf-stage-center}, its six coefficients must all
be equal. But the formulas for the four operators give the
following coefficients:
\[
\begin{array}{c|cccccc}
\text{pair}&12&13&14&23&24&34\\ \hline
\tau(z_4)&a+b+d&a+b+2d&a+b+c+3d&a+d&a+c+2d&a+c+d.
\end{array}
\]
Equality of the coefficients at $12$ and $13$ gives $d=0$.
Equality at $13$ and $23$ then gives $b=0$. Equality at $24$
and $23$ gives $c=0$. Consequently $\tau=aI=\varepsilon I$,
where $\varepsilon=\pm1$.

Every class in $\gr_k(P)$ is a finite integral sum of Lie
monomials with $k$ degree-one inputs. Applying $\Gamma$ multiplies
each input by $\varepsilon$, and the bilinearity of the bracket
multiplies each such monomial by $\varepsilon^k$.
This proves the formula in all degrees. Multiplication by
$\varepsilon^k$ is invertible, so $\Gamma$ is an automorphism.
\end{proof}

\begin{lem}
\label{lem:bf-normalize-graded}
Let $\Phi:BF\twoheadrightarrow BF$, let $\alpha$ and $\beta$
be as in Lemma~\ref{lem:braided-epi-reduction}, and choose $h$ realizing
$\alpha$ as in Lemma~\ref{lem:bf-spatial-f}. Then
\[
 \Gamma=\mathcal H_h^{-1}\circ\gr(\beta)
\]
is a graded Lie homomorphism with surjective $F$-equivariant
degree-one map $\tau=h_*^{-1}\beta_{\mathrm{ab}}$. Consequently,
\begin{equation}\label{eq:bf-signed-spatial}
 \beta_{\mathrm{ab}}=\varepsilon h_*,\qquad
 \gr_k(\beta)=\varepsilon^k
       \mathcal H_h|_{\gr_k(P)}\quad(k\ge1),
 \qquad\varepsilon\in\{1,-1\}.
\end{equation}
\end{lem}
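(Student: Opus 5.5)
The plan is to verify the hypotheses of Proposition~\ref{prop:bf-lie-scalar} for $\Gamma=\mathcal H_h^{-1}\circ\gr(\beta)$ and then unwind the conclusion. First, $\gr(\beta)$ is a degree-preserving graded Lie homomorphism by Proposition~\ref{prop:induced-graded-map}, and $\mathcal H_h^{-1}=\mathcal H_{h^{-1}}$ is a graded Lie automorphism by Lemma~\ref{lem:bf-spatial-graded}. Hence $\Gamma$ is a graded Lie homomorphism. Its degree-one map is $\tau=h_*^{-1}\beta_{\mathrm{ab}}$, since the degree-one action of $\mathcal H_{h^{-1}}$ is $(h^{-1})_*=h_*^{-1}$. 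Surjectivity of $\tau$ follows because $\beta$ is surjective (Lemma~\ref{lem:braided-epi-reduction}), so $\beta_{\mathrm{ab}}$ is surjective, and $h_*^{-1}$ is bijective.

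The main step is $F$-equivariance of $\tau$. We would repeat the computation of Proposition~\ref{prop:signed-spatial} with $BF$ in place of $BV$. For $f\in F$, take the lift $s(f)\in BF$, or any lift $b$ with $\pi_F(b)=f$. Then
\[
\beta(bpb^{-1})=\Phi(b)\beta(p)\Phi(b)^{-1}.
\]
Since $BF\le BV$ and $\pi|_{BF}=\pi_F$ under $F\le V$, Lemma~\ref{lem:linking-conjugation} applies to both conjugations. It gives $\lambda(bpb^{-1})=f_*\lambda(p)$ and $\lambda(\Phi(b)q\Phi(b)^{-1})=\alpha(f)_*\lambda(q)$, using $\pi_F\Phi(b)=\alpha(f)$. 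By surjectivity of $\lambda$ we obtain $\beta_{\mathrm{ab}}f_*=\alpha(f)_*\beta_{\mathrm{ab}}$.

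Lemma~\ref{lem:bf-spatial-f} gives $\alpha(f)=hfh^{-1}$, so by \eqref{eq:pair-pushforward-rules} we have $\alpha(f)_*=h_*f_*h_*^{-1}$. Hence $\tau f_*=h_*^{-1}\alpha(f)_*\beta_{\mathrm{ab}}=f_*\tau$. One point needs care: $h$ may be order-reversing and not in $V$. The pushforward calculus in Definition~\ref{def:pair-pushforward} is valid for every homeomorphism, however, and Lemma~\ref{lem:bf-spatial-graded} is stated for every $h\in\Homeo(\Cantor)$. So this causes no difficulty.

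Now Proposition~\ref{prop:bf-lie-scalar} gives $\tau=\varepsilon I$ and $\Gamma|_{\gr_k(P)}=\varepsilon^k I$. Composing with $\mathcal H_h$ on the left yields $\beta_{\mathrm{ab}}=h_*\tau=\varepsilon h_*$. In degree $k$, it yields $\gr_k(\beta)=\mathcal H_h\circ\Gamma|_{\gr_k(P)}=\varepsilon^k\mathcal H_h|_{\gr_k(P)}$, using that $\mathcal H_h$ is additive and preserves degree. This is \eqref{eq:bf-signed-spatial}. The only genuinely substantive step is the equivariance verification, and it is a direct transcription of the $BV$ argument.
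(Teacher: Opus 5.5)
Your proposal is correct and follows the paper's proof. Both show that $\Gamma$ is a graded Lie homomorphism via Proposition~\ref{prop:induced-graded-map} and Lemma~\ref{lem:bf-spatial-graded}, derive $\beta_{\mathrm{ab}}f_*=\alpha(f)_*\beta_{\mathrm{ab}}$, untwist it using $\alpha(f)_*=h_*f_*h_*^{-1}$, and then apply Proposition~\ref{prop:bf-lie-scalar}. The only cosmetic difference is that the paper writes $\Phi(s(f))=p_f\,s(\alpha(f))$ and discards conjugation by $p_f$ on $P_{\mathrm{ab}}$, whereas you apply Lemma~\ref{lem:linking-conjugation} directly to $\Phi(b)\in BF\le BV$, which already handles that lift-independence.
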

\begin{proof}
Surjectivity of $\beta$ gives surjectivity of its abelianization.
For $f\in F$, the element $\Phi(s(f))$ projects to $\alpha(f)$,
so write it as $p_f s(\alpha(f))$ with $p_f\in P$.
Apply $\Phi$ to $s(f)ps(f)^{-1}$ for $p\in P$. Inner conjugation
by $p_f$ is trivial on $P_{\mathrm{ab}}$, so the resulting identity
on $M$ is
\[
 \beta_{\mathrm{ab}}(f_*m)
      =\alpha(f)_*\beta_{\mathrm{ab}}(m).
\]
Since $\alpha(f)=hfh^{-1}$, pushforwards satisfy
$h_*f_*h_*^{-1}=\alpha(f)_*$. Hence
\begin{align*}
 \tau(f_*m)
 &=h_*^{-1}\alpha(f)_*\beta_{\mathrm{ab}}(m)\\
 &=f_*h_*^{-1}\beta_{\mathrm{ab}}(m)=f_*\tau(m).
\end{align*}
This proves $F$-equivariance. Surjectivity is retained on composing
with $h_*^{-1}$. Proposition~\ref{prop:induced-graded-map} and
Lemma~\ref{lem:bf-spatial-graded} ensure that $\Gamma$ is a
graded Lie homomorphism. Proposition~\ref{prop:bf-lie-scalar}
now makes its degree-$k$ part $\varepsilon^k I$, proving
\eqref{eq:bf-signed-spatial}.
\end{proof}

The braided Thompson group $BF$ is an extension of the ordered
Thompson group $F$ by pure braids, with a splitting given by
unbraided diagrams. Its order-preserving quotient action has a
larger module commutant than the action of $V$, but its pure braid
Lie brackets constrain the maps that can arise from group
endomorphisms. We can now state the main result of this section.

\begin{thm}\label{thm:bf-hopfian}
The braided Thompson group $BF$ is Hopfian.
\end{thm}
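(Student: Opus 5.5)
The proof assembles the pieces already in place, following the same pattern as Theorem~\ref{thm:bv-hopfian}. Let $\Phi:BF\twoheadrightarrow BF$ be a surjective endomorphism. Lemma~\ref{lem:braided-epi-reduction}, applied with $(E,Q,\pi_E)=(BF,F,\pi_F)$, gives $\Phi(P)=P$, $\ker\Phi=\ker\beta$ for $\beta=\Phi|_P$, and an automorphism $\alpha$ of $F$ with $\pi_F\Phi=\alpha\pi_F$. So it suffices to show that $\beta$ is injective.

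By Lemma~\ref{lem:bf-spatial-f}, I choose a homeomorphism $h$ of $\Cantor$ (order-preserving or order-reversing) realizing $\alpha$ by conjugation. Lemma~\ref{lem:bf-normalize-graded} then gives $\varepsilon\in\{1,-1\}$ such that, for every $k\ge1$,
\[
 \gr_k(\beta)=\varepsilon^k\,\mathcal H_h|_{\gr_k(P)}.
\]
By Lemma~\ref{lem:bf-spatial-graded}, $\mathcal H_h$ is a graded Lie automorphism of $\gr(P)$ with inverse $\mathcal H_{h^{-1}}$, so its restriction to each $\gr_k(P)$ is injective. Multiplication by $\varepsilon^k=\pm1$ is an automorphism of the abelian group $\gr_k(P)$. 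Hence every $\gr_k(\beta)$ is injective.

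Finally, $P$ is residually nilpotent by Lemma~\ref{lem:p-residual}. Theorem~\ref{thm:graded-detection}, applied with $H=K=P$ and $f=\beta$, then shows that $\beta$ is injective. Therefore $\ker\Phi=\ker\beta$ is trivial, and $\Phi$ is an automorphism.

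The only step needing care is that the hypotheses of Lemma~\ref{lem:bf-normalize-graded} are met; the real work already lies there and in Proposition~\ref{prop:bf-lie-scalar}. Two points deserve a check. First, when $h$ is order-reversing, the $F$-equivariance of $\tau=h_*^{-1}\beta_{\mathrm{ab}}$ still holds, since it uses only the identity $\alpha(f)=hfh^{-1}$ and not the orientation of $h$. Second, $\mathcal H_h$ is defined for arbitrary homeomorphisms of $\Cantor$ through $BV$-conjugations, so no lift of $h$ to $BF$ is needed.
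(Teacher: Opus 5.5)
Your proposal is correct and matches the paper's proof: reduce to $\beta=\Phi|_P$ via Lemma~\ref{lem:braided-epi-reduction}, obtain $\gr_k(\beta)=\varepsilon^k\mathcal H_h|_{\gr_k(P)}$ from Lemma~\ref{lem:bf-normalize-graded}, and conclude with the residual nilpotence of $P$ and Theorem~\ref{thm:graded-detection}. Your two side checks, that $h$ may be order-reversing and that no lift of $h$ to $BF$ is needed, agree with how those lemmas are stated and proved in the paper.
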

\begin{proof}
Let $\Phi:BF\twoheadrightarrow BF$. By
Lemma~\ref{lem:braided-epi-reduction}, its kernel is contained in $P$
and $\beta=\Phi|_P$ is a surjective endomorphism of $P$.
Lemma~\ref{lem:bf-normalize-graded} shows that each
$\gr_k(\beta)$ is an automorphism.

The group $P$ is residually nilpotent by
Lemma~\ref{lem:p-residual}. By Theorem~\ref{thm:graded-detection}, $\beta$ is injective. Hence $\Phi$ is an automorphism.
\end{proof}

\section{Hopficity of the labeled Thompson group \texorpdfstring{$V(G)$}{V(G)}}
\label{sec:vg-hopficity}

Throughout this section, $G$ is an arbitrary discrete group. The goal of this section is to prove that the labeled Thompson group
$V(G)$ is Hopfian if and only if $G$ is Hopfian. This proves the
conjecture of Wu--Wu--Zhao--Zhou \cite[Introduction]{WuWuZhaoZhou2025}. The main task is to control the restriction
of a surjective endomorphism to the label kernel.

There are many equivalent definitions for the labeled Thompson groups $V(G)$. For example one can use the tree-label-tree diagram to define $V(G)$ which can be find in \cite[Section~1]{WuWuZhaoZhou2025}. Just like in the case for $BV$, one can forget the labels to get a surjective map from $V(G)$ to $V$, the kernel $K$ also has the direct limit structure that $K=\varinjlim_TG^T$. See \cite[Section~1.5]{WuWuZhaoZhou2025} for more details. Note that the exact sequence $1\rightarrow K\rightarrow V(G)\rightarrow V\rightarrow 1$ splits, so we can define $V(G)$ as a semi-direct product.

\begin{defn}
Let $X=\Cantor$ and give group $G$ discrete topology. The group
\[
 K=C(X,G)
\]
consists of all locally constant continuous functions $X\to G$. The operation is pointwise multiplication,
$(fk)(x)=f(x)k(x)$, the identity is the constant function $1$, and
$f^{-1}(x)=f(x)^{-1}$. Every such function is constant on the members of a finite
partition of $X$ into cylinder sets. Using the action of $V$ on the Cantor set, we have the natural action of $V$ on $K$. The labeled Thompson group can be defined as follows:
\begin{equation}
 \label{eq:vg-semidir}
 V(G):=K\rtimes V,
 \qquad (v\cdot f)(x):=f(v^{-1}x).
\end{equation}
\end{defn}
It is not difficult to check that the group of locally constant continuous $G$-functions coincides with the direct limit $\varinjlim_TG^T$ in
\cite[Section~1.5]{WuWuZhaoZhou2025}. We use left actions and hence the multiplication
convention
\[
 (f,v)(k,w)=\bigl(f(v\cdot k),vw\bigr).
\]
The inverse is
$(f,v)^{-1}=(v^{-1}\cdot f^{-1},v^{-1})$. The notation $K\rtimes V$
means the semidirect product with this multiplication and action.
Write $\pi:V(G)\to V$, $\pi(f,v)=v$, for the projection. We identify $K$ with $K\times\{1\}$
and $V$ with $\{1\}\times V$ when no ambiguity can arise.

For $f\in K$, its support is
$\operatorname{supp}(f)=\overline{\{x\in X:f(x)\neq1\}}$, where the bar
denotes closure in $X$. The nonidentity set is already clopen here, so
it equals its closure. For a clopen subset $U\subseteq X$, define
\[
 K_U=\{f\in K:f(x)=1\text{ for all }x\notin U\}.
\]
For $g\in G$, let $g_U\in K_U$ be equal to $g$ on $U$ and to $1$ elsewhere.
In particular, $g_X$ is the constant function with value $g$. We will use
\begin{equation}
 \label{eq:vg-local-identities}
 K=K_U\times K_{X\setminus U},\qquad
 K_U\lhd K,\qquad
 v\cdot K_U=K_{vU},\qquad v\cdot g_U=g_{vU}.
\end{equation}
If $U=U_0\sqcup U_1$ is a disjoint clopen decomposition, then
\begin{equation}
 \label{eq:vg-label-split}
 g_U=g_{U_0}g_{U_1}.
\end{equation}

Similarly to the $BV$ case, the kernel $K$ of $V(G)$ is also non-Hopfian, one can also define the surjective endomorphism of $K$ by deleting the right half of $\mathfrak C$ which is not injective.

Let us first recall the minimality of the $V$ action on the Cantor set.

\begin{lem}
 \label{lem:vg-clopen}
 For each $x\in X$, the stabilizer
 $V_x=\{v\in V:vx=x\}$ acts transitively on the nonempty clopen subsets
 of $X$ that do not contain $x$. Moreover, every $V$-orbit in $X$ is dense.
\end{lem}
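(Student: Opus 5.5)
The plan has two parts: first the transitivity statement for the stabilizer $V_x$, then density of orbits deduced from it. Throughout I will use Lemma~\ref{lem:partition-matching}, including its relative form supported in a clopen set.

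\emph{Transitivity of $V_x$.} Let $A,B$ be nonempty clopen sets with $x\notin A\cup B$. I would first handle the case where $A$ and $B$ are disjoint, and then reduce the general case to it.

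\textbf{Disjoint case.} Since $A\cup B$ is clopen and misses $x$, choose a cylinder $[u]\ni x$ disjoint from $A\cup B$. Set $U=\Cantor\setminus[u]$, a clopen set containing $A\cup B$.
\begin{itemize}
\item If $U\setminus(A\cup B)\neq\varnothing$, take the ordered partitions $(A,B,U\setminus(A\cup B))$ and $(B,A,U\setminus(A\cup B))$ of $U$.
\item If $U=A\sqcup B$, take $(A,B)$ and $(B,A)$.
\end{itemize}
The relative form of Lemma~\ref{lem:partition-matching} then gives $v\in V$ that is the identity on $[u]$, so $v\in V_x$, and satisfies $v(A)=B$.

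\textbf{General case.} If $A\cap B\neq\varnothing$, I would pass through an auxiliary set. Since $\Cantor$ has no isolated points, the complement of $A\cup B\cup\{x\}$ inside a small cylinder around $x$ is nonempty and open. So there is a nonempty clopen $C$ disjoint from $A\cup B$ and not containing $x$: take a proper subcylinder of a cylinder neighbourhood of $x$ that misses both $A\cup B$ and $x$. Applying the disjoint case twice, $A\to C$ and then $C\to B$, and composing the two elements of $V_x$ gives the required element.

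This step only needs care with degenerate configurations, namely when $A\cup B$ fills $U$, or when one of the partitions has an empty piece. These are handled by dropping the empty piece, as in the case split above.

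\emph{Density of orbits.} Let $y\in X$ and let $W$ be a nonempty open set; I want some $v\in V$ with $vy\in W$. Shrink $W$ to a cylinder $[w]$. If $y\in[w]$ there is nothing to do. Otherwise, choose a nonempty clopen $D\subseteq[w]$ and a cylinder $[u]\ni y$ that is disjoint from $[w]$. The disjoint case of the transitivity argument, or Lemma~\ref{lem:partition-matching} directly, produces $v\in V$ with $v([u])\subseteq[w]$. Then $vy\in W$, which proves the orbit $Vy$ is dense.

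Alternatively, the density statement follows from Lemma~\ref{lem:local-density} applied with $U=X$. I expect no serious obstacle in either part; the only mildly delicate point is the bookkeeping of the complementary piece in the partitions.
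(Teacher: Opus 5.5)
Your argument is correct. The paper gives no proof of this lemma; it only recalls it as the standard minimality of the $V$-action on $\Cantor$. Your derivation from the relative form of Lemma~\ref{lem:partition-matching} therefore fills in what the paper takes for granted, and it is the natural way to do so.

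One streamlining is available in the transitivity part, where the detour through the auxiliary set $C$ and the two-step composition can be avoided:
\begin{enumerate}
\item Take a cylinder $[u]\ni x$ missing $A\cup B$.
\item Pass to a proper subcylinder $[u']\subsetneq[u]$ still containing $x$, for example by extending $u$ by the next digit of $x$.
\item Put $U=\Cantor\setminus[u']$. Then $U\setminus A$ and $U\setminus B$ both contain the nonempty set $[u]\setminus[u']$.
\item Match the two-piece partitions $(A,U\setminus A)$ and $(B,U\setminus B)$ of $U$ using the relative form of Lemma~\ref{lem:partition-matching}.
\end{enumerate}
This gives an element of $V_x$ sending $A$ to $B$ in a single step, whether or not $A$ and $B$ meet.

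In the density part, the auxiliary $D$ is only needed if you insist on using the fixed-point version. Transitivity of $V$ on nonempty proper clopen sets lets you take $D=[w]$ directly, and no disjointness of $[u]$ from $[w]$ is required. Citing Lemma~\ref{lem:local-density} with $U=\Cantor$ is also legitimate. That lemma is likewise stated without proof in the paper, though, so it is not a more basic source than the direct argument.
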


The next lemma proves preservation of $K$ under a surjective
endomorphism of $V(G)$.

\begin{lem}
 \label{lem:vg-base-preservation}
 If $\Phi:V(G)\to V(G)$ is surjective, then
 \[
  \Phi(K)=K,\qquad \Phi^{-1}(K)=K.
 \]
 Moreover, $\Phi$ induces an automorphism of $V(G)/K\cong V$.
\end{lem}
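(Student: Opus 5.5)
The plan is to copy the proof of Lemma~\ref{lem:braided-epi-reduction}, with a direct commutator argument replacing the normal-subgroup theorem that was available for $BV$. Put $q=\pi\circ\Phi:V(G)\twoheadrightarrow V$ and $N=\ker q$, so that $V(G)/N\cong V$. The key claim is $N\subseteq K$.

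Once the claim is proved, the rest is formal, exactly as before. The formula $u(q(g))=\pi(g)$ defines a surjective endomorphism $u$ of $V$. It is well defined because $N\subseteq K=\ker\pi$. Since $V$ is simple, it is Hopfian, so $u$ is an automorphism. From $\pi=uq$ we get $K=\ker\pi=\ker q=\Phi^{-1}(K)$. Hence $\Phi(K)\subseteq K$. Any preimage under $\Phi$ of an element of $K$ lies in $\Phi^{-1}(K)=K$, so $\Phi(K)=K$. Finally, $\alpha=u^{-1}$ is the induced automorphism of $V(G)/K\cong V$.

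To prove $N\subseteq K$, suppose some $(f,v)\in N$ has $v\neq1$. Then $\pi(N)$ is a nontrivial normal subgroup of $V$, so $\pi(N)=V$ by simplicity, and $NK=V(G)$. Choose a nonempty clopen $U$ with $vU\cap U=\varnothing$, which is possible because $v$ moves some point and $V$ acts continuously. For $k\in K_U$, normality gives
\[
 (f,v)\,k\,(f,v)^{-1}k^{-1}=f\,(v\cdot k)\,f^{-1}k^{-1}\in N.
\]
Conjugation by $f\in K$ acts pointwise, so $f(v\cdot k)f^{-1}$ is supported in $vU$. Now take a second commutator of this element with any $k'\in K_U$. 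The factor supported in $vU$ commutes with $k'$, since the supports are disjoint. What remains is $[k^{-1},k']\in N$. Hence $[K_U,K_U]\subseteq N$.

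Next I would conjugate by elements of $V$ and use Lemma~\ref{lem:partition-matching} (equivalently \eqref{eq:vg-local-identities}). Every nonempty proper clopen set is a $V$-translate of $U$, or of a clopen subset of $U$. This gives $[K_W,K_W]\subseteq N$ for every proper clopen $W$. Any commutator $[k,k']$ in $K$ factors over a finite cylinder partition into a product of commutators of its restrictions to the pieces, which are proper clopen sets. Therefore $[K,K]\subseteq N$.

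The image of $K$ in $V(G)/N$ is thus abelian. Since $NK=V(G)$, this image is all of $V(G)/N\cong V$, which would make $V$ abelian. This is a contradiction, so $N\subseteq K$.

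The main obstacle is the double-commutator step. The support bookkeeping has to be done carefully for the conjugation by the arbitrary label function $f$ and for the noncommutativity of $G$. The point that makes it work is that conjugation by $f$ preserves supports pointwise.
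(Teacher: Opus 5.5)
Your proof is correct, but it takes a different route from the paper's. The paper proves the inclusion $\Phi(K)\subseteq K$ first. It observes that $q(K)$ is normal in $V$ and rules out $q(K)=V$: in that case $q(K_{0X})$ and $q(K_{1X})$ would be commuting normal subgroups of $V$ with product $V$, and conjugation by $q(s)$ for the swap $s$ would interchange them. So both would equal $V$, making $V$ abelian. Hopficity of $V$ then upgrades $\Phi(K)\subseteq K$ to $\Phi^{-1}(K)=K$.

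You prove the opposite inclusion $\ker(\pi\circ\Phi)=\Phi^{-1}(K)\subseteq K$ first, and then rerun Lemma~\ref{lem:braided-epi-reduction} verbatim. Your double-commutator computation plays the role of Theorem~\ref{thm:braided-normal-subgroups}: it shows that every $N\lhd V(G)$ with $\pi(N)\neq 1$ contains $[K,K]$, so $V(G)/N$ is abelian.

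The paper's argument is shorter and avoids the support bookkeeping for conjugation by label functions. Yours costs a little more computation but gives a genuine normal-subgroup theorem for $V(G)$. Combined with perfectness of $V(G)$, it shows that every proper normal subgroup of $V(G)$ lies in $K$, exactly as for $BV$. It also makes the reductions for $BV$, $BF$ and $V(G)$ run along identical lines.
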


\begin{proof}
 Set $q=\pi\circ\Phi:V(G)\to V$. Since $K\lhd V(G)$ and $q$ is surjective,
 $q(K)$ is a normal subgroup of $V$. Simplicity of $V$ gives
 $q(K)=\{1\}$ or $q(K)=V$.

 Suppose that $q(K)=V$. Let $X_0=0X$, $X_1=1X$, and
 $A_i=q(K_{X_i})$. By \eqref{eq:vg-local-identities}, the groups $A_0,A_1$
 are commuting normal subgroups of $V$ and satisfy $A_0A_1=V$.
 Let $s\in V$ be the prefix replacement interchanging $0X$ and $1X$.
 Conjugation by $s$ interchanges $K_{X_0}$ and $K_{X_1}$, so conjugation
 by $q(s)$ interchanges $A_0$ and $A_1$. Since $V$ is simple and
 $A_0A_1=V$, at least one of these normal subgroups is $V$; conjugacy then
 forces both to be $V$. Their commutation would make $V$ abelian, a
 contradiction.

 Thus $q(K)=\{1\}$ and $\Phi(K)\subseteq \ker \pi=K$. The induced endomorphism
 $\overline\Phi:V\to V$ is surjective. As $V$ is nontrivial and simple,
 $\overline\Phi$ is an automorphism. The identity
 $\pi\Phi=\overline\Phi\pi$ and injectivity of $\overline\Phi$ now give
 $\Phi^{-1}(K)=K$. Finally, for $b\in K$, choose $e\in V(G)$ with
 $\Phi(e)=b$. The preimage equality implies $e\in K$, proving
 $\Phi(K)=K$.
\end{proof}

Using Rubin's reconstruction theorem for the quotient $V$-automorphism $\overline\Phi$, we can modify the endomorphism of $\Phi$. After modification, we have a clear description on the induced surjective endomorphism of $K$.

\begin{lem}
 \label{lem:vg-normalize}
 Let $\Phi:V(G)\to V(G)$ be surjective. After composing $\Phi$ with an
 automorphism of its target, one may assume that its induced map on $V$
 is the identity. Under this assumption there are a surjective
 homomorphism $\psi:K\to K$ and a map $c:V\to K$ such that
 \begin{align}
  \psi(v\cdot f)&=c(v)(v\cdot\psi(f))c(v)^{-1}.
  \label{eq:vg-twisted-equivariance}
 \end{align}
\end{lem}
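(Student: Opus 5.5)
By Lemma~\ref{lem:vg-base-preservation}, $\Phi$ induces an automorphism $\overline\Phi$ of $V$ with $\pi\Phi=\overline\Phi\pi$. By Theorem~\ref{thm:rubin} there is $h\in\Homeo(\Cantor)$ with $\overline\Phi(v)=hvh^{-1}$. The plan is to build an automorphism $\Theta_h$ of $V(G)$ that realizes this conjugation, and then replace $\Phi$ by $\Theta_h^{-1}\circ\Phi$.

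To construct $\Theta_h$, set
\[
 \Theta_h(f,v)=\bigl(f\circ h^{-1},\,hvh^{-1}\bigr).
\]
Composition with a homeomorphism preserves local constancy, so $f\circ h^{-1}\in K$. Rubin's theorem gives $hvh^{-1}\in V$. For the multiplication rule it suffices to check that the action is intertwined, i.e.
\[
 (hvh^{-1})\cdot(k\circ h^{-1})=(v\cdot k)\circ h^{-1}.
\]
Both sides evaluate at $x$ to $k(v^{-1}h^{-1}x)$, so this holds directly. The map $\Theta_{h^{-1}}$ is the inverse of $\Theta_h$, and clearly $\pi\Theta_h=\overline\Phi\pi$. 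Hence $\Phi'=\Theta_h^{-1}\Phi$ is surjective and satisfies $\pi\Phi'=\pi$. Since $\Phi$ is injective exactly when $\Phi'$ is, we may work with $\Phi'$; this is the ``composing with an automorphism of its target'' in the statement.

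Under this normalization, Lemma~\ref{lem:vg-base-preservation} (applied to $\Phi'$) gives $\Phi'(K)=K$. Define $\psi=\Phi'|_K:K\to K$; it is surjective by that lemma. For $v\in V$, write $\Phi'(1,v)=(c(v),w)$. Then $\pi\Phi'=\pi$ forces $w=v$, so $\Phi'(v)=c(v)v$ with $c(v)\in K$. In the semidirect product the action is realized by conjugation, $v\cdot f=vfv^{-1}$. Applying $\Phi'$ gives
\[
 \psi(v\cdot f)=\Phi'(v)\psi(f)\Phi'(v)^{-1}=c(v)\,v\psi(f)v^{-1}\,c(v)^{-1}.
\]
This is exactly \eqref{eq:vg-twisted-equivariance}.

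The only step that is not formal is verifying that $\Theta_h$ is well defined. This requires that $h$ normalizes $V$ inside $\Homeo(\Cantor)$, which is supplied by Theorem~\ref{thm:rubin}, and that $h$ preserves locally constant functions, which follows from continuity. I expect no real obstacle. The only care needed is to keep the left-action convention consistent when checking that $\Theta_h$ is a homomorphism.
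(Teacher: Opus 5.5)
Your proposal is correct and follows the paper's proof essentially step for step. The paper likewise defines $T_h(f,v)=(f\circ h^{-1},hvh^{-1})$, checks $(v\cdot k)\circ h^{-1}=(hvh^{-1})\cdot(k\circ h^{-1})$, replaces $\Phi$ by $T_h^{-1}\circ\Phi$, and gets the twisted equivariance from $(1,v)(f,1)(1,v)^{-1}=(v\cdot f,1)$; the one detail you both leave implicit is that $\Theta_{h^{-1}}$ is well defined, i.e.\ $h^{-1}Vh\subseteq V$, which follows from surjectivity of $\overline\Phi$.
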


\begin{proof}
 By Lemma~\ref{lem:vg-base-preservation}, the quotient map
 $\overline\Phi$ is an automorphism. By Theorem~\ref{thm:rubin},  There exists
 $h\in \operatorname{Homeo}(X)$ such that
 $\overline\Phi(v)=hvh^{-1}$ for all $v\in V$. For $f\in K$, Precomposing with a homeomorphism of Cantor set, we still get a continuous map in $K$. Thus we can define map as follows:

 \[
  T_h:V(G)\longrightarrow V(G),
  \qquad T_h(f,v)=(f\circ h^{-1},hvh^{-1})
 \]
Next, we say that this is a group homomorphism. First observe that
 \[
  (v\cdot k)\circ h^{-1}
   =(hvh^{-1})\cdot(k\circ h^{-1}),
 \]
for $v\in V$ and $k\in K$. This is because evaluation of either side at $x$ gives $k(v^{-1}h^{-1}x)$.
 Consequently,
 \begin{align*}
  T_h((f,v)(k,w))
   &=\bigl((f\circ h^{-1})((hvh^{-1})\cdot(k\circ h^{-1})),
             hvwh^{-1}\bigr)\\
   &=T_h(f,v)T_h(k,w).
 \end{align*}
 Direct substitution gives $T_{h^{-1}}T_h=T_hT_{h^{-1}}=\mathrm{id}$,
 so $T_h$ is an automorphism. Replace $\Phi$ by
 $T_h^{-1}\circ\Phi$. The replacement has the same kernel $\ker\Phi=\ker (T_h^{-1}\circ\Phi)$ and induces
 the identity on $V$.

 Define $\psi$ and $c$ by
 \[
  \Phi(f,1)=(\psi(f),1),\qquad \Phi(1,v)=(c(v),v).
 \]
 The restriction $\psi$ is a surjective endomorphism by
 Lemma~\ref{lem:vg-base-preservation}. Applying $\Phi$ to $(1,v)(f,1)(1,v)^{-1}=(v\cdot f,1)$ gives
 \eqref{eq:vg-twisted-equivariance}.
\end{proof}

Now for every surjective endomorphism of $V(G)$, after normalization, we can have a surjective endomorphism of the kernel $K$. Next, we say that the induced endomorphism does not enlarge the support of functions in $K$.

\begin{lem}
 \label{lem:vg-locality}
 Suppose that $\psi:K\to K$ is surjective and satisfies
 \eqref{eq:vg-twisted-equivariance} for a map $c:V\to K$. Then
 \[
  \psi(K_U)\subseteq K_U
  \qquad\text{for every clopen }U\subseteq X.
 \]
\end{lem}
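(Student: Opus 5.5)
The plan is to turn the twisted equivariance \eqref{eq:vg-twisted-equivariance} into a pointwise statement about \emph{where} $\psi(f)$ is trivial, and then use the minimality of local subgroups of $V$.

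\textbf{Step 1: a pointwise conjugacy relation.} Fix a clopen $U$ and $f\in K_U$. We may assume $U\neq X$; if $U=\varnothing$ then $f=1$ and there is nothing to prove. Every $v\in V_{X\setminus U}$ fixes $U$ pointwise, so $v\cdot f=f$. Evaluating \eqref{eq:vg-twisted-equivariance} at a point $x$ then gives
\[
 \psi(f)(x)=c(v)(x)\,\psi(f)(v^{-1}x)\,c(v)(x)^{-1}.
\]
Hence $\psi(f)(x)=1$ if and only if $\psi(f)(v^{-1}x)=1$. Consequently the clopen set $Z_f=\{x:\psi(f)(x)\neq1\}$ meets $X\setminus U$ in a clopen set invariant under $V_{X\setminus U}$.

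\textbf{Step 2: the dichotomy.} By Lemma~\ref{lem:local-density}, every $V_{X\setminus U}$-orbit in $X\setminus U$ is dense. A nonempty invariant clopen subset of $X\setminus U$ is therefore all of $X\setminus U$. So for each $f\in K_U$, exactly one of the following holds:
\begin{itemize}[leftmargin=*]
\item $\psi(f)\in K_U$, which is what we want; or
\item $\psi(f)(x)\neq1$ for every $x\notin U$. Call such an $f$ \emph{bad}.
\end{itemize}
It remains to rule out bad elements.

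\textbf{Step 3: the main obstacle, ruling out bad elements.} I expect this to be the hard step, because for nonabelian $G$ the conjugations by $c(v)$ destroy any naive linear averaging.

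The first thing I would try is the coinvariant/doubling argument that works when $G$ is abelian. In that case \eqref{eq:vg-twisted-equivariance} says $\psi$ is a genuine $\mathbb ZV$-map. A bad $f$ would then produce a nonzero $V_{X\setminus U}$-invariant ``constant part'' on $X\setminus U$. The splitting identity \eqref{eq:vg-label-split}, $g_W=g_{W_0}g_{W_1}$, combined with transitivity on clopen sets (Lemma~\ref{lem:partition-matching}), forces that part to equal its own square, hence to be trivial.

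For general $G$ I would run the same idea pointwise, using surjectivity:
\begin{itemize}[leftmargin=*]
\item Choose $v\in V$ with $vU\cap U=\varnothing$ and, if needed, shrink $U$. Then $f$ and $v\cdot f$ commute, and both images are nowhere trivial on the complement of their supports. Evaluating at a point outside $U\cup vU$ gives commuting nontrivial values.
\item Next fix $x$ and a small clopen $W\ni x$. Write a preimage of $g_W$ as a product $\prod_i k_i$ with $k_i\in K_{A_i}$ over a fine partition $\{A_i\}$.
\item Each factor $\psi(k_i)$ is either supported in $A_i$ or nowhere trivial off $A_i$. Comparing values at points outside $W$, where the product must be $1$, should force every bad factor to cancel. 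One then transports this back through $V$-translates of $U$ to contradict the existence of a bad $f$.
\end{itemize}
If this bookkeeping fails, the fallback is to show that the set of clopens admitting bad elements is $V$-invariant and closed under passing to clopen supersets. Then I would derive a contradiction from the two commuting generating subgroups $\psi(K_{X_0})$ and $\psi(K_{X_1})$, in the spirit of the proof of Lemma~\ref{lem:vg-base-preservation}.
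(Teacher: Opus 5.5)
\textbf{There is a genuine gap: Step 3 is the whole content of the lemma, and it is not carried out.}

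Your Steps 1 and 2 are correct. For $v\in V_{X\setminus U}$ you have $v\cdot f=f$. Evaluating \eqref{eq:vg-twisted-equivariance} gives the pointwise conjugacy relation. Density of $V_{X\setminus U}$-orbits then yields the dichotomy.

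None of the routes you sketch for Step 3 closes:
\begin{itemize}
\item The claim that comparing values outside $W$ ``should force every bad factor to cancel'' is unjustified. Nontrivial values $\psi(k_i)(y)$ can perfectly well multiply to $1$.
\item Commuting nontrivial values at a point outside $U\cup vU$ are not a contradiction.
\item Neither are two commuting subgroups $\psi(K_{X_0}),\psi(K_{X_1})$ generating $K$. Commuting normal subgroups generating $G$ exist whenever $G$ is a direct product, so commutation alone cannot give the contradiction.
\end{itemize}
The structural weakness is your choice of group. $V_{X\setminus U}$ compares values of one function at \emph{different} points, conjugated by point-dependent elements $c(v)(x)$. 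So you only learn that the values of $\psi(f)$ on $X\setminus U$ lie in a single conjugacy class.

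The missing idea is to fix the evaluation point instead of the support; this is what the paper does.
\begin{enumerate}
\item Put $\rho_x(f)=\psi(f)(x)$. It is surjective onto $G$ (use constant functions), so $A_U=\rho_x(K_U)$ is a \emph{normal} subgroup of $G$.
\item For $v$ in the point stabilizer $V_x$, evaluating \eqref{eq:vg-twisted-equivariance} at $x$ conjugates by the single element $c(v)(x)$. Normality absorbs this, so $A_{vU}=A_U$.
\item By Lemma~\ref{lem:vg-clopen}, all $A_U$ with $U$ a nonempty clopen avoiding $x$ equal one normal subgroup $J_x$.
\item Two disjoint such sets $U,D$ show that $J_x$ is abelian. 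Then $G=A_UA_{X\setminus U}$ with commuting factors puts $J_x$ inside $Z(G)$.
\item Now $\rho_x(g_U)$ is central, so the conjugations become invisible and this value $t$ is independent of $U$.
\item Your abelian doubling argument via \eqref{eq:vg-label-split} then gives $t=t^2$, hence $t=1$. Writing a general $f\in K_U$ as a product of $(g_i)_{U_i}$ finishes the proof.
\end{enumerate}
So your abelian-case instinct was right. What makes it work for arbitrary $G$ is the centrality of $J_x$, and your proposal contains no route to it.
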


\begin{proof}
 Fix $x\in X$ and consider the surjective homomorphism
 \[
  \rho_x:K\longrightarrow G,
  \qquad \rho_x(f)=\psi(f)(x).
 \]
 For a clopen set $U$, put $A_U=\rho_x(K_U)$. Since $K_U\lhd K$ and
 $\rho_x$ is surjective, $A_U\lhd G$. Surjectivity here follows
 from that of $\psi$ and from evaluation of constant functions in $K$. If $v\in V_x$, evaluating
 \eqref{eq:vg-twisted-equivariance} at $x$ gives
 \begin{equation}
  \label{eq:vg-stabilizer-evaluation}
  \rho_x(v\cdot f)=c(v)(x)\rho_x(f)c(v)(x)^{-1}.
 \end{equation}
 In particular, for $v\in V_x$, moving support $U$ to $vU$ does not change the image group:
 \[
  A_{vU}=c(v)(x)A_Uc(v)(x)^{-1}=A_U.
 \]
 Lemma~\ref{lem:vg-clopen} implies that all $A_U$, for nonempty
 clopen $U$ avoiding $x$, equal a single normal subgroup. We denote $J_x\lhd G$ by this normal subgroup of $G$ which only depends on $x\in X$.

 We first show that $J_x\subseteq Z(G)$. Choose two disjoint nonempty
 clopen sets $U,D$ avoiding $x$. The groups $K_U$ and $K_D$ commute,
 hence so do $A_U$ and $A_D$. Both equal $J_x$, so $J_x$ is abelian.
 Furthermore, \eqref{eq:vg-local-identities} and surjectivity of $\rho_x$
 give
 \[
  G=A_UA_{X\setminus U},\qquad [A_U,A_{X\setminus U}]=\{1\}.
 \]
 Since $A_U=J_x$ is abelian, it commutes with each factor in this
 product, proving $J_x\subseteq Z(G)$.

 Fix $g\in G$. For a nonempty clopen $U$ avoiding $x$, the element
 $\rho_x(g_U)$ lies in $J_x$, hence is central. If $U'$ is another such
 clopen set, choose $v\in V_x$ with $vU=U'$. Equations
 \eqref{eq:vg-local-identities} and
 \eqref{eq:vg-stabilizer-evaluation} imply
 \[
  \rho_x(g_{U'})
  =c(v)(x)\rho_x(g_U)c(v)(x)^{-1}
  =\rho_x(g_U).
 \]
 Denote this common value by $t=t(x,g)$. Split $U$ into two nonempty
 clopen sets $U_0,U_1$. The label-splitting identity
 \eqref{eq:vg-label-split} gives
 \[
  t=\rho_x(g_U)
   =\rho_x(g_{U_0})\rho_x(g_{U_1})=t^2,
 \]
 so $t=1$. Thus $\psi(g_U)(x)=1$ whenever $x\notin U$.

 Finally, if $f\in K_U$, choose a finite clopen partition
 $U=U_1\sqcup\cdots\sqcup U_r$ on which $f$ has constant values
 $g_1,\ldots,g_r$. Then
 $f=(g_1)_{U_1}\cdots(g_r)_{U_r}$. For every $x\notin U$, each factor
 has trivial image under $\rho_x$, so $\psi(f)(x)=1$. This is the
 desired inclusion.
\end{proof}

Finally, we want to transfer the surjective endomorphism of $K$ to surjective endomorphisms of $G$ at every Cantor point.

\begin{lem}
 \label{lem:vg-fibers}
 In the normalized situation of Lemma~\ref{lem:vg-normalize}, for every
 $x\in X$ there is a surjective endomorphism $\vartheta_x:G\to G$ such
 that
 \begin{equation}
  \label{eq:vg-pointwise}
  \psi(f)(x)=\vartheta_x(f(x))
  \qquad(f\in K).
 \end{equation}
\end{lem}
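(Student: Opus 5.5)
Fix $x\in X$ and set $\vartheta_x(g)=\psi(g_X)(x)$, where $g_X$ is the constant function; this is the only possible choice, since \eqref{eq:vg-pointwise} applied to $f=g_X$ forces it. Because $g\mapsto g_X$ is a homomorphism $G\to K$ and evaluation at $x$ is a homomorphism $K\to G$, $\vartheta_x$ is an endomorphism of $G$. The argument then has three steps: show that $\psi(f)(x)$ depends only on $f(x)$, deduce \eqref{eq:vg-pointwise} from this, and prove surjectivity.

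\textbf{Step 1: $\psi(f)(x)$ depends only on $f(x)$.} Let $f\in K$. Choose a clopen cylinder $U\ni x$ on which $f$ is constant, say with value $g=f(x)$. Then $f=g_U\cdot k$ with $k\in K_{X\setminus U}$. Lemma~\ref{lem:vg-locality} gives $\psi(k)\in K_{X\setminus U}$, so $\psi(k)(x)=1$. Hence $\psi(f)(x)=\psi(g_U)(x)$. The same decomposition, applied to $g_X=g_U\cdot g_{X\setminus U}$, gives $\psi(g_X)(x)=\psi(g_U)(x)$, again because $\psi(g_{X\setminus U})\in K_{X\setminus U}$ vanishes at $x$. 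Combining the two identities,
\[
\psi(f)(x)=\psi(g_U)(x)=\psi(g_X)(x)=\vartheta_x(f(x)),
\]
which is \eqref{eq:vg-pointwise}. This step uses only locality; the twisted equivariance \eqref{eq:vg-twisted-equivariance} is not needed here.

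\textbf{Step 2: surjectivity of $\vartheta_x$.} Given $g\in G$, surjectivity of $\psi$ gives $f\in K$ with $\psi(f)=g_X$. Evaluating at $x$ and using Step 1,
\[
g=\psi(f)(x)=\vartheta_x(f(x)),
\]
so $g$ lies in the image of $\vartheta_x$. Alternatively one can note that $\rho_x=\vartheta_x\circ\mathrm{ev}_x$ and that $\rho_x$ was already shown to be surjective in the proof of Lemma~\ref{lem:vg-locality}.

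\textbf{Main obstacle.} The only delicate point is the locality input: one needs $\psi(K_{X\setminus U})\subseteq K_{X\setminus U}$ for clopen $U$ containing $x$. This is exactly Lemma~\ref{lem:vg-locality}, so the remaining argument is bookkeeping with the factorisation $K=K_U\times K_{X\setminus U}$ from \eqref{eq:vg-local-identities}. I expect no further difficulty. Any dependence of $\vartheta_x$ on $x$ is permitted by the statement and would be handled later, presumably using the twisted equivariance.
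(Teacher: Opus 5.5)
Your proposal is correct and follows the paper's proof. It uses the same definition $\vartheta_x(g)=\psi(g_X)(x)$, applies Lemma~\ref{lem:vg-locality} to $K_{X\setminus U}$ for a clopen $U\ni x$ on which $f$ is constant, and proves surjectivity by the same preimage of a constant function; the only cosmetic difference is that the paper compares $f$ with $g_X$ in one step via $g_X^{-1}f\in K_{X\setminus U}$, whereas you pass through the intermediate element $g_U$.
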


\begin{proof}
 Define $\vartheta_x(g)=\psi(g_X)(x)$. Constant functions form a copy
 of $G$ in $K$, so $\vartheta_x$ is a homomorphism. Given $f\in K$,
 let $g=f(x)$ and choose a clopen neighborhood $U$ of $x$ on which
 $f|_U=g$. The function $g_X^{-1}f$ belongs to $K_{X\setminus U}$.
 By Lemma~\ref{lem:vg-locality}, its image under $\psi$ is trivial at
 $x$. Consequently,
 \[
  \psi(f)(x)=\psi(g_X)(x)=\vartheta_x(f(x)).
 \]
 To see surjectivity, fix $a\in G$. Since $\psi$ is surjective, there
 is an $f\in K$ with $\psi(f)=a_X$. Equation
 \eqref{eq:vg-pointwise} then gives $a=\vartheta_x(f(x))$.
\end{proof}

The labeled Thompson group $V(G)=K\rtimes V$ has two complementary
features: arbitrary
$G$-labels can be placed on clopen pieces, while $V$ moves those pieces
by prefix replacements. The quotient-normalization and locality
lemmas show that a surjective endomorphism must respect this local
structure. Although $K$ itself is non-Hopfian for nontrivial $G$, the
maps that extend to the labeled Thompson group $V(G)$ induce
surjective endomorphisms of $G$ at
every Cantor point. This is the additional constraint behind the main
result for the labeled Thompson group $V(G)$.

\begin{thm}
 \label{thm:vg-hopfian}
 For every group $G$, the labeled Thompson group $V(G)$ is
 Hopfian if and only if $G$ is Hopfian.
\end{thm}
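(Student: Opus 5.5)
We prove the two implications separately.

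\emph{If $G$ is non-Hopfian, then so is $V(G)$.} Choose a surjective, noninjective endomorphism $\theta:G\to G$. Apply it pointwise: define $\Theta(f,v)=(\theta\circ f,v)$. The function $\theta\circ f$ is again locally constant, and $\theta\circ(v\cdot k)=v\cdot(\theta\circ k)$. Hence $\Theta$ is a homomorphism of $K\rtimes V$. It is surjective: given $k\in K$, pick a cylinder partition on which $k$ is constant and choose $\theta$-preimages of the finitely many values. It is not injective: if $1\neq g\in\ker\theta$, then $(g_X,1)\in\ker\Theta$. This direction is essentially functoriality of $G\mapsto V(G)$.

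\emph{If $G$ is Hopfian, then so is $V(G)$.} Let $\Phi$ be a surjective endomorphism. By Lemma~\ref{lem:vg-normalize}, after composing with the automorphism $T_h^{-1}$ we may assume $\Phi$ induces the identity on $V$; this does not change $\ker\Phi$. By Lemma~\ref{lem:vg-base-preservation}, $\Phi^{-1}(K)=K$, so $\ker\Phi\subseteq K$ and $\ker\Phi=\ker\psi$. It therefore suffices to show that $\psi$ is injective.

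Lemma~\ref{lem:vg-fibers} gives surjective endomorphisms $\vartheta_x$ of $G$ with $\psi(f)(x)=\vartheta_x(f(x))$. Since $G$ is Hopfian, each $\vartheta_x$ is an automorphism. Now suppose $\psi(f)=1$. Then $\vartheta_x(f(x))=1$ for every $x$, so $f(x)=1$ for every $x$, and $f=1$. Hence $\psi$, and therefore $\Phi$, is injective.

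The substantive work has already been done in Lemmas~\ref{lem:vg-locality} and \ref{lem:vg-fibers}, which reduce $\psi$ to pointwise endomorphisms. What remains to check carefully is that the normalization step preserves both the kernel and surjectivity, and that the pointwise map in the first direction really lands in $K$ and is surjective. We do not need any continuity of $x\mapsto\vartheta_x$.
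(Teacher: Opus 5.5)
Your proposal is correct and follows the paper's proof essentially step for step. The non-Hopfian direction uses the same pointwise map $(f,v)\mapsto(\theta\circ f,v)$. The Hopfian direction uses the same chain: normalization via Lemma~\ref{lem:vg-normalize}, the reduction $\ker\Phi\subseteq\Phi^{-1}(K)=K$ from Lemma~\ref{lem:vg-base-preservation}, and the pointwise surjective endomorphisms $\vartheta_x$ of Lemma~\ref{lem:vg-fibers}.
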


\begin{proof}
 Suppose first that $G$ is Hopfian, and let $\Phi:V(G)\to V(G)$ be a
 surjective endomorphism. Normalize the quotient as in
 Lemma~\ref{lem:vg-normalize}; this does not change the kernel.
 Lemma~\ref{lem:vg-fibers} gives surjective endomorphisms
 $\vartheta_x:G\to G$ for all $x\in X$. Hopficity of $G$ makes each
 $\vartheta_x$ injective. 
 
 We consider the associated surjective endomorphism $\psi$ on $K$. If $\psi(f)=1$, where $1$ is the constant map from $X$ to $G$. Using \ref{eq:vg-pointwise}, we have that
 \[
  1=1(x)=\psi(f)(x)=\vartheta_x(f(x))\quad\text{for every }x\in X,
 \]
 and hence $f(x)\in \ker(\vartheta_x)$ for every $x$. The injectivity of $\vartheta_x$ implies that $f(x)=1$ for every $x$. Thus $f$ is the identity of $K$ and $\psi$ is injective.
 Since the induced map on $V(G)/K\cong V$ is injective as well,
 $\ker\Phi\subseteq K$ and
 $\ker\Phi=\ker\psi=\{1\}$. Therefore $\Phi$ is injective and the labeled Thompson group $V(G)$ is Hopfian.

 Conversely, suppose that $G$ admits a surjective noninjective
 endomorphism $\alpha:G\to G$. Applying $\alpha$ to every label defines
 \[
  V(\alpha):V(G)\longrightarrow V(G),
  \qquad (f,v)\longmapsto(\alpha\circ f,v).
 \]
 This is a homomorphism. It is surjective because a locally constant
 function has finite image, and one can choose a preimage under
 $\alpha$ for each of its finitely many values. If
 $1\ne g\in\ker\alpha$, then the nontrivial constant function $g_X$
 lies in $\ker V(\alpha)$. Thus the labeled Thompson group $V(G)$ is not Hopfian.
\end{proof}

\clearpage
\bibliographystyle{alpha}
\bibliography{reference}
\end{document}